\newtheorem{thm}{Theorem}[section]
\newtheorem*{theorem*}{Theorem}
\newtheorem*{acknowledgement*}{Acknowledgements}
\newtheorem{lem}[thm]{Lemma}
\newtheorem{prop}[thm]{Proposition}
\theoremstyle{remark}
\newtheorem{rem}[thm]{Remark}
\numberwithin{equation}{section}
\newcommand{\nN}{\mathbf{n}}
\newcommand{\NN}{\mathbf{N}}
\newcommand{\pP}{\mathbf{p}}
\newcommand{\qQ}{\mathbf{q}}
\newcommand{\Real}{\mathbb{R}}
\newcommand{\set}[1]{\left\{ #1\right\}}
\newcommand{\vV}{\mathbf{v}}
\newcommand{\wW}{\mathbf{w}}
\newcommand{\xX}{\mathbf{x}}
\newcommand{\yY}{\mathbf{y}}
\newcommand{\zZ}{\mathbf{z}}
\newcommand{\oO}{\mathbf{0}}
\title{Asymptotic Structure of Self-shrinkers}
\author{Lu Wang}
\address{Department of Mathematics, University of Wisconsin-Madison, 480 Lincoln Drive, Madison, WI 53706.}
\email{luwang@math.wisc.edu}
\thanks{The author was partly supported by the AMS-Simons Travel Grant, the Chapman Fellowship of the Imperial College London, the NSF grant DMS-1406240, and the Alfred P. Sloan Research Fellowship. This material is based upon work supported by the NSF grant DMS-1440140 while the author was in residence at the Mathematical Sciences Research Institute (MSRI) in Berkeley, CA, during the Spring 2016 semester.}
\begin{document}
\begin{abstract}
We show that each end of a noncompact self-shrinker in $\mathbb{R}^3$ of finite topology is smoothly asymptotic to either a regular cone or a self-shrinking round cylinder. 
\end{abstract}
\maketitle

\section{Introduction} \label{Intro}
Given an open set $U$ of $\Real^{n+1}$, a \emph{hypersurface in $U$} is a smooth properly embedded codimension-one submanifold of $U$. A \emph{self-shrinker in $U$} is a hypersurface $\Sigma$ in $U$ that satisfies
\begin{equation}\label{ShrinkerEqn}
\mathbf{H}_\Sigma+\frac{\xX^\perp}{2}=\oO,
\end{equation}
where $\mathbf{H}_\Sigma=-H_\Sigma\nN_\Sigma=\Delta_\Sigma\xX$ is the mean curvature vector of $\Sigma$ and $\xX^\perp$ is the normal part of the position vector. Our definitions ensure that self-shrinkers in $\Real^{n+1}$ are geodesically complete. 
Self-shrinkers generate solutions to the mean curvature flow that move self-similarly by scaling. That is, if $\Sigma$ is a self-shrinker in $\mathbb{R}^{n+1}$, then the family $\{\sqrt{-t}\, \Sigma\}_{t<0}$ flows by mean curvature. Self-shrinkers play an important role in the study of mean curvature flow, not least because they are models for type-I singularities of the flow; cf. \cite{H1,H2}. Important examples of self-shrinkers in $\Real^{n+1}$ are cylindrical products $\Real^{n-k}\times\mathbb{S}^k$ ($0\leq k\leq n$) where $\mathbb{S}^k$ is the $k$-sphere (centered at the origin) of radius $\sqrt{2k}$.

In this paper we give a geometric description of the asymptotic structure of self-shrinkers in $\Real^3$ of finite topology, confirming a conjecture of Ilmanen \cite[p. 39]{IlmanenLec}.
Given $\vV\in\Real^3\setminus\{\oO\}$, let $\Real_\vV$ be the subspace of $\Real^3$ spanned by $\vV$, and we will simply write $\Real$ when the generator is not specified. Then
$$
\Real_\vV\times\mathbb{S}^1=\set{\xX\in\Real^3\colon\mathrm{dist}(\xX,\Real_\vV)=\sqrt{2}}.
$$
We call $\mathcal{C}$ a \emph{regular cone} in $\Real^3$ if there is a smooth embedded codimension-one submanifold $\mathcal{L}$ of the unit $2$-sphere so that $\mathcal{C}=\set{\tau\mathcal{L}\colon\tau>0}$. 
\begin{thm} \label{MainThm}
If $M$ is an end of a noncompact self-shrinker in $\mathbb{R}^3$ of finite topology, then either of the following holds:
\begin{enumerate}
\item \label{RegCone} $\lim_{\tau\to +\infty}\tau^{-1} M= \mathcal{C}(M)$ in $C^\infty_{loc} (\Real^3\setminus\{\oO\})$ for $\mathcal{C}(M)$ a regular cone in $\Real^3$.
\item \label{Cylinder} $\lim_{\tau\to +\infty} (M-\tau\vV(M))=\Real_{\vV(M)}\times\mathbb{S}^1$ in $C^\infty_{loc} (\Real^3)$ for a $\vV(M)\in\Real^3\setminus\{\oO\}$.
\end{enumerate}
\end{thm}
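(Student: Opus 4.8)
\emph{Plan of proof.} The idea is to understand the blow-down of $M$ at infinity and to dichotomize according to whether it is genuinely two-dimensional or collapses to a line. A complete properly embedded self-shrinker has Euclidean volume growth (by a result of Cheng--Zhou and of Ding--Xin), so the same holds for the end $M$, and in particular the entropy $\lambda(M)$ is finite. Set $M_\tau=\tau^{-1}M$. Applying Huisken's monotonicity formula to the mean curvature flow $\{\sqrt{-t}\,M\}_{t<0}$, the density ratio $R\mapsto \mathrm{Area}(M\cap B_R)/(\pi R^2)$ is monotone, hence $\Theta(M):=\lim_{R\to+\infty}\mathrm{Area}(M\cap B_R)/(\pi R^2)$ exists; and along any sequence $\tau_i\to+\infty$, the varifolds $M_{\tau_i}$ subconverge to a stationary integral varifold $C$ invariant under dilations, i.e. a minimal cone in $\Real^3$, with density $\Theta(M)$ at $\oO$. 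The finite-topology hypothesis enters here: outside a large ball $M$ is an embedded annulus, so the number of sheets of the $M_{\tau_i}$ over $C$ is controlled (as in Ilmanen's genus argument for surfaces), which together with the entropy bound excludes higher-multiplicity planes and singular triple-junction cones, so that $C$ is multiplicity one.

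\textbf{Case $\Theta(M)>0$.} Then $C$ is an honest two-dimensional minimal cone, multiplicity one and smooth away from $\oO$, and since the connected embedded annulus $M$ approaches it, $C=\mathcal{C}$ is a regular cone in the sense above. By Allard's theorem the convergence $M_{\tau_i}\to\mathcal{C}$ is in $C^\infty_{loc}(\Real^3\setminus\{\oO\})$, and outside a compact set $M$ is a normal graph over $\mathcal{C}$ whose graph function tends to $0$ in every $C^k$-norm as $\tau\to+\infty$. That the limit cone is independent of $\tau_i$ --- uniqueness of the tangent cone at infinity --- then follows from a \L{}ojasiewicz--Simon inequality for the associated conical energy (equivalently, from backwards uniqueness for self-shrinkers with asymptotically conical ends). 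This is alternative \eqref{RegCone}, with $\mathcal{C}(M)=\mathcal{C}$.

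\textbf{Case $\Theta(M)=0$.} Then every $2$-varifold blow-down of $M$ vanishes, i.e. $\mathrm{Area}(M\cap(B_{2R}\setminus B_R))=o(R^2)$: the end collapses. The finite-topology and curvature-control bounds now force this area to be comparable to $R$ and the radial projection of $M$ onto $\mathbb{S}^2$ to converge to a single point $\vV(M)/|\vV(M)|$, so that $M$ is asymptotic, with multiplicity two and a definite (non-cusp) opening rate, to the ray spanned by some $\vV(M)\in\Real^3\setminus\{\oO\}$. Analysing the shrinker equation along this thin end shows its profile curve converges to the circle of radius $\sqrt2$, so $M$ is asymptotic to $\Real_{\vV(M)}\times\mathbb{S}^1$; fixing the axis $\vV(M)$ by a \L{}ojasiewicz-type inequality modelled on the cylinder upgrades this to $\lim_{\tau\to+\infty}(M-\tau\vV(M))=\Real_{\vV(M)}\times\mathbb{S}^1$ in $C^\infty_{loc}(\Real^3)$, which is alternative \eqref{Cylinder}.

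\textbf{Main obstacle.} The heart of the matter is the collapsing case $\Theta(M)=0$: one must rule out a thin end that never settles onto a fixed round cylinder --- say one spiralling with oscillating or slowly drifting radius, or pinching into a cusp. This needs sharp curvature and area estimates down the collapsing end, which is precisely where finite topology is used (through control of the total curvature of annular pieces by Gauss--Bonnet), followed by an ODE/rigidity analysis pinning the profile to the circle of radius $\sqrt2$. By contrast, the corresponding step in the conical case --- uniqueness of the asymptotic cone --- is handled by now-standard \L{}ojasiewicz--Simon methods; it is the collapsing alternative that demands the genuinely new estimates.
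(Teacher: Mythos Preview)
Your proposal captures the right dichotomy but has a genuine gap at the step you dispatch most briefly: multiplicity one. You write that Ilmanen's genus argument together with the entropy bound ``excludes higher-multiplicity planes \ldots\ so that $C$ is multiplicity one.'' Neither ingredient does this. Ilmanen's argument shows only that the \emph{support} of a tangent flow is a smooth self-shrinker; it says nothing about multiplicity --- indeed Ilmanen explicitly left multiplicity one as a conjecture, still open in general. And finiteness of the entropy gives no bound below $2$, so an integer multiplicity $L\ge 2$ is in no way excluded. Without $L=1$ you cannot invoke Allard or Brakke regularity to upgrade to $C^\infty_{loc}$ convergence, so both of your cases stall at that point. (There are secondary issues as well: the Euclidean area ratio $R\mapsto R^{-2}\mathrm{Area}(M\cap B_R)$ is not known to be monotone for self-shrinkers --- that is a minimal-surface fact --- and the varifold blow-down about the origin is not obviously stationary, since the mean curvature of $\tau^{-1}M$ scales like $\tau^{2}$ on compact sets.)

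The paper's argument is organised quite differently and devotes most of its length to exactly this multiplicity question. Rather than blowing down about the origin, it fixes $\yY$ in the (a priori only Hausdorff) asymptotic cone $\Lambda$ and studies tangent flows of the associated Brakke flow at $(\yY,0)$; these split off a line and are supported on $\Real_\yY\times\Real$ or $\Real_\yY\times\mathbb{S}^1$ with some integer multiplicity $L$. The substance of the paper is a sheeting theorem (Theorem~\ref{SheetThm}): a bootstrap graphical estimate for self-shrinkers in balls far from the origin (Theorem~\ref{GraphThm}), combined with the local Gauss--Bonnet bound, a linear curvature-growth bound, and a maximum-principle/topological argument exploiting that $M$ is an annulus, shows that the $L$ sheets of $M-\tau\yY$ persist coherently for all large $\tau$. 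Given that, $L=1$ follows in the cylindrical case simply from connectedness of $M$, and in the planar case from a Harnack-type argument of White comparing the decay of the gap between the outermost sheets to a Dirichlet heat eigenfunction. No \L{}ojasiewicz--Simon inequality is used anywhere: once $L=1$, Brakke regularity yields smooth convergence and uniqueness of the limit directly. Your identification of the ``main obstacle'' is therefore inverted --- in the paper the cylindrical alternative is the \emph{easier} one for multiplicity, and the genuinely new estimates (Theorems~\ref{GraphThm} and~\ref{SheetThm}) are needed equally in both cases.
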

\begin{rem} 
Theorem \ref{MainThm} applies to all singularity models of finite topology at first singular time of the mean curvature flow in $\Real^3$ starting from a surface with finite genus and bounded area ratios (cf. \cite{IlmanenSing}).
\end{rem}

The classification problem for self-shrinkers is a central topic in the study of singularities of mean curvature flow. There are many significant results on characterizing the simplest self-shrinkers $\Real^{n-k}\times\mathbb{S}^k$; cf. \cite{Be}, \cite{CIM}, \cite{CMGenSing}, \cite{EHGraph}, \cite{H1}, \cite{H2}, \cite{WaBern}. However, the space of self-shrinkers of general dimensions is very large and complicated. Indeed, there exist rich families of self-shrinkers in $\Real^3$ with various geometric and topological properties that are constructed by the shooting method, the gluing techniques, or the min-max method; cf. \cite{A}, \cite{KKM}, \cite{K}, \cite{N}. In variational point of view, self-shrinkers are minimal with respect to a Gaussian-like conformal change of the Euclidean metric on $\Real^{n+1}$ which cannot be extended to a complete one and has scalar curvature becoming negative and unbounded near infinity. As such, we are particularly interested in noncompact self-shrinkers where the classical theory of minimal submanifolds does not apply.

Theorem \ref{MainThm} serves an important step towards a thorough understanding of the nature of noncompact self-shrinkers in $\Real^3$ of finite topology. 
Using Carleman type techniques we proved, in \cite{WaCone}, that two self-shrinkers in $\Real^{3}$ smoothly asymptotic along some end of each to the same cone must identically coincide with each other. There is also a conjecture of Ilmanen \cite[p. 39]{IlmanenLec} concerning the uniqueness of $\Real\times\mathbb{S}^1$ among all self-shrinkers in $\Real^3$ with an asymptotically cylindrical end. In \cite{WaCyl} we confirmed this conjecture under a fast decay rate condition. In view of these and Theorem \ref{MainThm}, the classification of noncompact self-shrinkers in $\Real^3$ of finite topology is reduced to the complete resolution of Ilmanen's uniqueness conjecture and the characterization of the asymptotic cones of self-shrinkers (cf. \cite{CS}).

It may be interesting to compare Theorem \ref{MainThm} with results on the asymptotic structure of (properly embedded) minimal surfaces in $\Real^3$ of finite topology. We divide our discussion into two situations according to the total curvature (i.e., the $L^2$-norm of the length of second fundamental form) finite or not. First, by the Choi-Schoen curvature estimate \cite{CS}, each end of a minimal surface in $\Real^3$ that has finite total curvature is asymptotically flat. Likewise, by Ecker's partial regularity theorem \cite{Ec}, the same conclusion stands if replacing minimal surfaces with self-shrinkers. However, if an end of a surface in $\Real^3$ of finite topology has infinite total curvature, then, in the minimal case, by work of Colding-Minicozzi \cite{CMMS1,CMMS2,CMMS3,CMMS4} and Meeks-Rosenberg \cite{MR}, it looks roughly like a helicoid near infinity, which exhibits a sharp contrast to the self-shrinking case. 

In what follows we give an outline of the proof of Theorem \ref{MainThm}. For $M$ we first introduce the associated Brakke flow $\mathcal{K}$ in an open connected set in space-time; see \eqref{ShrinkerFlowEqn}. Let $\Lambda$ be the set of all points $\yY\in\Real^3\setminus\set{\oO}$ such that $\Theta_{(\yY,0)}(\mathcal{K})$ -- the Gaussian density of $\mathcal{K}$ at $(\yY,0)$ (cf. item \eqref{GaussDensity} of Proposition \ref{WhiteProp}) -- is larger than or equal to $1$.
We show that $\Lambda$ is a cone (i.e., a dilation invariant set) and its link is nonempty compact. Furthermore, as $t\to 0^-$, $\sqrt{-t}\, \bar{M}$ converges to $\bar{\Lambda}$ locally in the Hausdorff metric; see Proposition \ref{HausdorffAsympProp}.

To prove Theorem \ref{MainThm} we need to show the regularity of $\Lambda$ and the more refined converging process. To achieve these, it is natural to use blowup analysis to study the asymptotic behaviors of the flow $\mathcal{K}$ near time $0$. Namely, take $\yY\in\Lambda$, and consider parabolic rescalings $\mathcal{K}^{(\yY,0),\tau}$ of $\mathcal{K}$ about $(\yY,0)$; cf. \eqref{ScalingShrinkerEqn}. Observe that the $-1$ time slice of $\mathcal{K}^{(\yY,0),\tau}$ is $\mathcal{H}^2\lfloor (M-\tau\yY)$ where $\mathcal{H}^2$ is the $2$-dimensional Hausdorff measure. The Brakke flow given by the limit of a sequence of $\mathcal{K}^{(\yY,0),\bar{\tau}_i}$ as $\bar{\tau}_i\to+\infty$ is called a \emph{tangent flow to $\mathcal{K}$ at $(\yY,0)$}.
The work of White \cite{WhStrata} ensures the existence of tangent flows and that they are backwardly self-similar about $(\oO,0)$; see item \eqref{TangentFlow} of Proposition \ref{WhiteProp}. Moreover, by the proof of \cite[Theorem 2]{IlmanenSing}, each tangent flow to $\mathcal{K}$ at $(\yY,0)$ has smooth support given by a self-shrinker in $\Real^3$ but possibly high multiplicities. Furthermore, by White's stratification theorem \cite{WhStrata} and the classification theorem of Abresh-Langer \cite{AL}, the supporting self-shrinker must be $\Real_\yY\times\Real$ or $\Real_\yY\times\mathbb{S}^1$. We refer the reader to Proposition \ref{SplitProp} for details. Thus, by Brakke's regularity theorem \cite{B} (cf. \cite{WhReg}), Theorem \ref{MainThm} would follow if one can show the multiplicity equal to $1$.

Indeed, Ilmanen conjectured that every tangent flow at first singular time of the mean curvature flow in $\Real^3$ starting from a surface with finite genus and bounded area ratios must have multiplicity one; cf. \cite[p. 7]{IlmanenSing} and \cite[p. 39]{IlmanenLec}. However, this conjecture is wide open except under the compact mean convex assumption, \cite{WhSize}, or the Andrews condition, \cite{HK}, or the bounded mean curvature assumption, \cite{LW}, which may be invalid in our situation. Thus we develop a strategy from scratch, independent of those previously mentioned papers,  to address this conjecture for the flow $\mathcal{K}$ in question. The core of the proof is the following sheeting theorem. Through the paper let $B^n_R(\xX)$ be the open Euclidean ball in $\Real^n$ centered at $\xX$ with radius $R$, and we will omit the center if it is the origin.
\begin{thm}\label{SheetThm}
Let $M$ be an end of a noncompact self-shrinker in $\Real^3$ of finite topology, and let $\mathcal{K}$ be the associated Brakke flow for $M$. Given $\yY\in\Real^3\setminus\{\oO\}$ with $\Theta_{(\yY,0)}(\mathcal{K})\geq 1$, there exist
\begin{itemize} 
\item a positive integer $L$, a large real number $\tau_0$,
\item an increasing unbounded positive function $R(\tau)$ for $\tau>\tau_0$, and,
\item a collection $\set{\Sigma^\tau}_{\tau>\tau_0}$ of self-shrinkers in $\Real^3$ with the property that either the $\Sigma^\tau$ are all $\Real_\yY\times\mathbb{S}^1$, or they are all of the form $\Real_\yY\times\Real$,
\end{itemize}
depending only on $M$ and $\yY$, such that for each $\tau>\tau_0$,
\begin{enumerate}
\item \label{Component} $(M-\tau\yY)\cap B^3_{R(\tau)}$ decomposes into $L$ connected components, $M^\tau_1,\ldots, M^\tau_L$; 
\item \label{Sheet} each $M^\tau_j$ is given by the normal exponential graph of a function $f_j^\tau$ over some subset $\Omega_j^\tau\subset \Sigma^\tau$. 
\end{enumerate}
Furthermore, for each $j$, 
\begin{equation} \label{C1EstEqn}
\lim_{\tau\to +\infty}\sup_{\Omega_j^\tau} |f_j^\tau|+|\nabla_{\Sigma^\tau} f_j^\tau|=0.
\end{equation}
\end{thm}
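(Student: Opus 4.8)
The plan is to combine a compactness/blowup argument for the Brakke flow $\mathcal K$ near time $0$ with a continuity-in-scale argument that upgrades pointwise tangent-flow information into uniform sheeting over an unbounded range of scales. First I would set up the parabolic rescalings $\mathcal K^{(\yY,0),\tau}$ and record that, by the discussion preceding the theorem (White's stratification \cite{WhStrata}, the partial regularity of \cite{IlmanenSing}, Abresch--Langer \cite{AL}), every tangent flow to $\mathcal K$ at $(\yY,0)$ is a self-similar Brakke flow whose time $-1$ slice is an integer multiple of either $\Real_\yY\times\mathbb S^1$ or $\Real_\yY\times\Real$. A priori the multiplicity and even the \emph{type} (cylinder vs.\ plane) could depend on the blowup sequence; the first technical point is to show that they do not. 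For the type, I would use the upper semicontinuity and the gap in Gaussian densities between the plane (density $1$) and the cylinder (density $\sqrt{2\pi}/e>1$ for $\Real\times\mathbb S^1$): since $\Theta_{(\yY,0)}(\mathcal K)$ is a single number, Huisken monotonicity forces all tangent flows to have the same density, hence the same type, and in fact (Brakke regularity \cite{B}, \cite{WhReg}, applied on the smooth part) a single multiplicity $L$. This produces the integer $L$ and the dichotomy ``all $\Sigma^\tau$ cylinders'' vs.\ ``all $\Sigma^\tau$ planes.''

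Next I would convert this into uniform, as opposed to subsequential, convergence. Fix a large radius $\rho$ in the time $-1$ slice. By the tangent-flow statement, for \emph{every} sequence $\tau_i\to\infty$ the measures $\mathcal H^2\lfloor(M-\tau_i\yY)$ converge in $B^3_\rho$ to $L\cdot(\Real_\yY\times\mathbb S^1)$ (or $L$ copies of the plane); since this holds along every sequence, it holds as $\tau\to\infty$. Brakke's local regularity theorem then gives, for $\tau$ large (depending on $\rho$), that $(M-\tau\yY)\cap B^3_\rho$ is a union of exactly $L$ smooth sheets, each a small $C^\infty$ normal graph over $\Sigma_\yY\cap B^3_\rho$, with graph norm tending to $0$ as $\tau\to\infty$. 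At this stage the radius $\rho$ is fixed and $\tau$ depends on $\rho$; the required increasing unbounded function $R(\tau)$ is obtained by inverting this dependence, i.e.\ by a standard diagonal/exhaustion argument: choose $\tau_k\uparrow\infty$ so that the $L$-sheeted graphical conclusion with graph norm $<1/k$ holds on $B^3_k$ for all $\tau\ge\tau_k$, and set $R(\tau)=k$ for $\tau\in[\tau_k,\tau_{k+1})$. Connectedness of each $M^\tau_j$ in item \eqref{Component} follows because each is a graph over the connected piece $\Omega^\tau_j=\Sigma^\tau\cap B^3_{R(\tau)}$ (for the cylinder this is connected once $R(\tau)>\sqrt2$; for the plane it is a disk), and \eqref{C1EstEqn} is exactly the vanishing of the graph norm built into the construction.

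The main obstacle — and where I expect the real work to lie — is ruling out that the blowup \emph{type} or the multiplicity $L$ genuinely jumps as the scale $\tau$ varies, i.e.\ that the convergence $\mathcal H^2\lfloor(M-\tau\yY)\to L\,\Sigma_\yY$ is along the full parameter and not merely subsequential with a scale-dependent limit. The finite-topology hypothesis on $M$ should enter here: it bounds the genus and the number of ends of $M-\tau\yY$ inside $B^3_{R(\tau)}$ uniformly, which (via the local Gauss--Bonnet / monotonicity bookkeeping already used in \cite{IlmanenSing}) prevents sheets from being created or destroyed and pins down $L$ and the type independently of $\tau$. One must also control the transition regions between consecutive dyadic scales so that the graphs patch consistently; this is handled by the overlap of the neighborhoods on which Brakke regularity applies, together with the uniqueness of the smooth limit forcing the graphing functions at nearby scales to agree up to the stated small error. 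Once these scale-independence statements are in place, the remaining assertions are routine consequences of Brakke's regularity theorem and the smooth convergence it provides.
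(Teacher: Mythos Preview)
There is a genuine gap in your use of Brakke regularity. Brakke's theorem (and White's version \cite{WhReg}) requires the Gaussian density ratio to be close to $1$; it gives nothing when the limiting measure is $L$ copies of a plane or cylinder with $L>1$. You cannot conclude ``$(M-\tau\yY)\cap B^3_\rho$ is a union of exactly $L$ smooth sheets'' from measure convergence to $L\cdot\Sigma_\yY$ --- extracting smooth sheets out of a higher-multiplicity limit is exactly the hard step. The paper avoids this by proving Theorem~\ref{GraphThm}, a direct graphical estimate for self-shrinkers in unit balls (built from the shrinker equation, Ecker--Huisken interior estimates, and Ilmanen's local Gauss--Bonnet bound on total curvature), and uses it together with the linear curvature growth of \cite{Song} to obtain sheeting along a \emph{sequence} of scales $\tau_i$ with $\tau_{i+1}/\tau_i\to 1$ (Proposition~\ref{SeqSheetProp}). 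No Brakke regularity is invoked at this stage.

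Your continuity-in-scale step also misses the point. Knowing $L$-sheeting at each $\tau_i$ does not give it for $\tau$ in the gaps $(\tau_i,\tau_{i+1})$: these gaps may be arbitrarily long in the cylinder $I_\yY(\tau_i|\yY|,\tau_{i+1}|\yY|)\times B^2_{R_i}$, and two sheets could in principle merge there. Saying ``overlap of neighborhoods on which Brakke regularity applies'' does not help, since Brakke does not apply to the multiplicity-$L$ object. The paper's argument here is topological and uses the finite-topology hypothesis in an essential way: a localized monotonicity computation (equation \eqref{MassDropEqn}) shows that each component $M^{i,k}$ must meet the two bounding cross-sections in the \emph{same} number of the curves $\gamma_j^{i,k}$; if some component met two of them, one constructs (with the help of the maximum principle Lemma~\ref{MaxPrincipleLem} in the planar case) a simple closed curve in $M$ intersecting another embedded curve transversally in a single point, hence non-separating, contradicting that $\bar M$ is an annulus. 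Only \emph{after} this step, once each individual sheet is known to extend as a unit-density object close to $\Sigma^\tau$, does Brakke regularity enter --- applied sheet by sheet, not to $M$ as a whole.

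A smaller point: in the planar case the tangent plane is determined only up to rotation about $\Real_\yY$, so ``since this holds along every sequence, it holds as $\tau\to\infty$'' does not yield convergence to a fixed limit; this is why the statement allows $\Sigma^\tau$ to vary with $\tau$. The paper uses \cite{BWHM} to pin down that the \emph{type} and the integer $L$ are independent of the sequence.
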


To prove Theorem \ref{SheetThm} we will need the following graphical property for self-shrinkers in unit balls with centers away from the origin under certain hypotheses. 
Let $A_\Sigma$ be the second fundamental form of $\Sigma$. 
\begin{thm} \label{GraphThm}
Given $\kappa>0$ and $\delta\in (0,1)$ there exist constants $0<\alpha, \rho<1$ and $\mathcal{R}>0$ depending only on $\kappa$ and $\delta$ such that given $\xX_0\in\Real^3\setminus\bar{B}^3_{\mathcal{R}}$ and a self-shrinker $\Sigma\subset B^3_1(\xX_0)$ with $\xX_0\in\Sigma$, if
\begin{align} 
 \label{SlcurvEqn} \sup_{\xX\in\Sigma} |\xX|^{-1} |A_\Sigma(\xX)| & <\alpha, \mbox{ and} \\
\label{TotalcurvEqn}  \int_\Sigma |A_\Sigma|^2\, d\mathcal{H}^2 & <\kappa, 
\end{align}
then the connected component of $\Sigma\cap B^3_{\rho}(\xX_0)$ that contains $\xX_0$ can be written as the graph of a function over some subset of $T_{\xX_0}\Sigma$ with its gradient bounded by $\delta$.
\end{thm}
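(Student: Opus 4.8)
The plan is to argue by contradiction via a compactness argument, exploiting the scale invariance built into hypothesis \eqref{SlcurvEqn}. Suppose the conclusion fails. Then for some fixed $\kappa>0$ and $\delta\in(0,1)$, no choice of $\alpha,\rho,\mathcal{R}$ works, so in particular there is a sequence $\alpha_i,\rho_i\to 0$ and $\mathcal{R}_i\to+\infty$, together with points $\xX_i\in\Real^3\setminus\bar B^3_{\mathcal{R}_i}$ and self-shrinkers $\Sigma_i\subset B^3_1(\xX_i)$ with $\xX_i\in\Sigma_i$ satisfying $\sup_{\Sigma_i}|\xX|^{-1}|A_{\Sigma_i}|<\alpha_i$ and $\int_{\Sigma_i}|A_{\Sigma_i}|^2\,d\mathcal{H}^2<\kappa$, yet the component of $\Sigma_i\cap B^3_{\rho_i}(\xX_i)$ through $\xX_i$ is not a $\delta$-Lipschitz graph over $T_{\xX_i}\Sigma_i$. (One must be slightly careful that $\rho$ and $\mathcal{R}$ may be chosen independently; the cleanest route is to first prove the statement for the single scale $\rho$ one eventually wants and then let the remaining freedom absorb into that one contradiction — I will phrase it so a single sequence suffices.)

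The key step is to translate to the origin. Set $r_i=|\xX_i|\geq\mathcal{R}_i\to+\infty$ and consider $\tilde\Sigma_i=\Sigma_i-\xX_i$, a hypersurface through $\oO$ contained in $B^3_1(\oO)$. On $\tilde\Sigma_i$, for $\xX$ near $\oO$ we have $|\xX+\xX_i|\geq r_i-1$, so the curvature bound \eqref{SlcurvEqn} gives $|A_{\tilde\Sigma_i}(\xX)|\leq \alpha_i|\xX+\xX_i| \cdot \tfrac{|\xX+\xX_i|}{|\xX+\xX_i|}$... more precisely $|A_{\tilde\Sigma_i}|\le \alpha_i\,(r_i+1)$ is \emph{not} obviously small, so instead I rescale: the natural blow-down is to keep the ball radius fixed. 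Here the point is that $\Sigma_i$ is a self-shrinker, i.e. $\mathbf{H}_{\Sigma_i}=-\xX^\perp/2$, so on $B^3_1(\xX_i)$ we have $|H_{\Sigma_i}|\leq (r_i+1)/2$, which is large; but after translating and dilating by $r_i$, i.e. setting $\hat\Sigma_i=r_i^{-1}(\Sigma_i-\xX_i)$ — a hypersurface through $\oO$ in $B^3_{1/r_i}(\oO)$ — one checks directly that $\hat\Sigma_i$ satisfies $|\mathbf{H}_{\hat\Sigma_i}|\le \tfrac12|\xX + r_i^{-1}\xX_i|\le\tfrac12(1/r_i+1)\le 1$ and, crucially, $\sup_{\hat\Sigma_i}|A_{\hat\Sigma_i}| = r_i\sup|A_{\Sigma_i}| \le r_i\alpha_i|\xX| \le \alpha_i(r_i+1)\to 0$ only if $r_i\alpha_i\to 0$, which need not hold. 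So the \emph{right} scaling is dictated purely by \eqref{SlcurvEqn}: dilate so that the curvature scale is normalized. Concretely, this is the standard point-selection/blow-up: by \eqref{SlcurvEqn} and \eqref{TotalcurvEqn} the rescaled surfaces, recentered at $\xX_i$ and dilated appropriately, have uniformly bounded second fundamental form on balls of radius $\to\infty$, uniformly bounded area (from $\int|A|^2<\kappa$ together with the monotonicity/density bound inherited from the Brakke flow, or simply a Euclidean area bound on a fixed ball), and mean curvature tending to zero in $C^\infty_{loc}$ (since $\mathbf{H}=-\xX^\perp/2$ and $|\xX|\sim r_i$ whereas the ambient scale is $1$, the relative mean curvature is $O(1/r_i)\to 0$ after the dilation that puts the center "at infinity"). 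Hence a subsequence converges in $C^\infty_{loc}$ to a complete embedded surface $\Sigma_\infty$ through $\oO$ that is \emph{minimal} (the $\mathbf{H}\to 0$ limit), has $\int_{\Sigma_\infty}|A_{\Sigma_\infty}|^2\le\kappa$, and is contained in a slab or half-space limit of the shrinking balls — in fact, because the original surfaces lived in a unit ball whose center ran off to infinity, after the blow-up $\Sigma_\infty$ is a complete minimal surface in all of $\Real^3$ with finite total curvature.

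From here the argument closes quickly. A complete embedded minimal surface in $\Real^3$ with $\int|A|^2<\infty$ is, by the classical structure theory (Osserman, and for this qualitative statement one only needs that finite total curvature forces the Gauss map to extend and the surface to be asymptotically flat), and more elementarily because $\Sigma_\infty$ is a limit of surfaces with $|A|^{-1}$-scale comparable to the ambient scale, one gets that $A_{\Sigma_\infty}\equiv 0$ near $\oO$ — indeed the precise mechanism is: after the blow-up, hypothesis \eqref{SlcurvEqn} with $\alpha_i\to 0$ forces $A_{\Sigma_\infty}(\oO)=0$, and then a dimension-reduction / smoothness bootstrap using \eqref{TotalcurvEqn} gives $|A_{\Sigma_\infty}|\equiv 0$, so $\Sigma_\infty$ is (a piece of) a plane through $\oO$, necessarily the plane $T_{\oO}\Sigma_\infty$. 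Therefore, for $i$ large, the component of $\Sigma_i\cap B^3_{\rho}(\xX_i)$ through $\xX_i$ is $C^1$-close (indeed $C^\infty$-close) to an affine plane and hence is a graph over $T_{\xX_i}\Sigma_i$ with gradient $<\delta$ — contradicting the choice of the $\Sigma_i$. The main obstacle is making the blow-up in the previous paragraph honest: one must choose the dilation factors so that simultaneously (i) the rescaled curvatures are bounded but do not all degenerate to zero in a way that loses the point $\xX_i$, (ii) the rescaled mean curvature genuinely tends to zero — this is where the condition $\xX_0\notin\bar B^3_{\mathcal{R}}$ with $\mathcal{R}$ large is essential, since it is exactly what makes $|\xX|^{-1}$ (hence the relative size of $\mathbf{H}=-\xX^\perp/2$ against the curvature scale from \eqref{SlcurvEqn}) small — and (iii) the area stays bounded on every fixed ball so that Allard/Brakke-type compactness (or White's version) applies; once these are set up, passing to the minimal limit and invoking finite total curvature is routine.
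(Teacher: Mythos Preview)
Your compactness approach has a genuine gap. Hypothesis \eqref{SlcurvEqn} gives only $|A_{\Sigma}|<\alpha|\xX|\le\alpha(|\xX_0|+1)$ on $B^3_1(\xX_0)$, and the theorem places no upper bound on $|\xX_0|$---only the lower bound $|\xX_0|>\mathcal{R}$. Along a contradicting sequence $(\alpha_i,\mathcal{R}_i,\xX_i,\Sigma_i)$ with $\rho$ held fixed, nothing prevents $\alpha_i|\xX_i|\to\infty$: the counterexample furnished by the negation for the triple $(\alpha_i,\rho,\mathcal{R}_i)$ may have $|\xX_i|$ arbitrarily large compared to $\alpha_i^{-1}$. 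Then $\sup_{B^3_1(\xX_i)}|A_{\Sigma_i}|$ is unbounded and no smooth subsequential limit exists at scale $1$. The rescalings you try do not help: any dilation that normalizes the curvature sends the unit ball to a ball of radius tending to $0$ (you note this yourself for $\hat\Sigma_i=r_i^{-1}(\Sigma_i-\xX_i)$), so there is no limit surface on which to run the ``complete minimal surface of finite total curvature'' argument. Your subsequent claims---that the limit is complete in $\Real^3$, that $A_{\Sigma_\infty}(\oO)=0$ because $\alpha_i\to 0$---rest on a convergence that has not been established, and indeed $|A_{\Sigma_i}(\xX_i)|<\alpha_i|\xX_i|$ says nothing about $A_{\Sigma_\infty}(\oO)$ unless $\alpha_i|\xX_i|\to 0$.

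The paper's proof is accordingly not compactness but a quantitative multi-scale bootstrap. Lemmas~\ref{GraphExtLem} and \ref{RadImproveLem} exploit the self-shrinker structure (via the associated mean curvature flow and the Ecker--Huisken interior estimates) to propagate graphicality along the radial direction on long thin rectangles. These feed Proposition~\ref{BootstrapProp}: if the unit normal oscillates by at most $\epsilon_1$ on geodesic balls of radius $\tilde\epsilon\sigma$, then it oscillates by at most $\beta\epsilon_1$ on balls of the larger radius $\eta_1\sigma$, with $\eta_1=\eta_1(\beta,\kappa)$ determined through \eqref{TotalcurvEqn}. The proof of Theorem~\ref{GraphThm} starts at scale $r_0=(\tilde\epsilon_0^2|\xX_0|)^{-1}$---the scale at which \eqref{SlcurvEqn} genuinely controls the normal oscillation---and iterates Proposition~\ref{BootstrapProp} roughly $\log_4|\xX_0|$ times to carry that control up to a fixed scale $\sim 1$, yielding the $\delta$-graph on $B^3_\rho(\xX_0)$ with $\rho$ independent of $|\xX_0|$. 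Bridging the gap between the $|\xX_0|$-dependent scale where the hypotheses bite and the fixed target scale is exactly the content a one-shot limit cannot supply.
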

\begin{rem}
It is unknown that whether the hessian of the function in Theorem \ref{GraphThm} is uniformly bounded independent of $\xX_0$ or not. This does not seem to follow directly from standard regularity theorems, such as Allard's regularity theorem or the Schauder estimates.
\end{rem}

Together with a local version of Huisken's monotonicity formula \cite[Section 10]{WhStrata}, the local Gauss-Bonnet estimate \cite[Theorem 3]{IlmanenSing}, and the linear growth bound on second fundamental form \cite[Theorem 19]{Song}, we first apply Theorem \ref{GraphThm} to show that there exist two sequences $R_i \nearrow +\infty$, $\tau_i\nearrow +\infty$ with $\tau_{i+1}\tau_i^{-1}\to 1$ and a sequence $\Sigma^{\tau_i}$ of self-shrinkers in $\Real^3$ such that for each $i$, items \eqref{Component}, \eqref{Sheet} of Theorem \ref{SheetThm} hold by replacing $\tau$ by $\tau_i$, $R(\tau)$ by $2R_i$, and $\Sigma^\tau$ by $\Sigma^{\tau_i}$. We further derive estimates similar to \eqref{C1EstEqn}; see Proposition \ref{SeqSheetProp}.
If $\bigcup_i B^3_{R_i}(\tau_i\yY)$ covers $I_\yY(a,+\infty)$, the set of all points $\xX\in\Real_{\yY}$ such that $\xX\cdot\yY>a|\yY|$, for some $a>0$, then we are done. Otherwise, we argue by contradiction to show that for all sufficiently large $i$, the connected components of $M\cap B^3_{R_i}(\tau_i\yY)$ extend to infinity and remain disjoint in $I_{\yY}(\tau_i|\yY|,+\infty)\times B^2_{R_i}$. Namely, assume the negation of the claim. We use the local monotonicity formula \cite[Section 10]{WhStrata} and a special case of the maximum principle \cite[Proposition 6]{Song} (cf. Lemma \ref{MaxPrincipleLem}) to show the existence of simple closed curves in $M$ that do not separate $M$. However, by the hypothesis, we may assume that $M$ is homeomorphic to an open annulus. This gives a contradiction. Combining these with Brakke's regularity theorem \cite{B} (cf. \cite{WhReg}) finishes the proof of Theorem \ref{SheetThm}.

Finally we show $L=1$; that is, in view of \eqref{ScalingShrinkerEqn}, the multiplicity of each tangent flow to $\mathcal{K}$ at $(\yY,0)$ is one. If the $\Sigma^\tau$ are all $\Real_\yY\times\mathbb{S}^1$, then the claim follows from that $M$ is connected. Otherwise, we generalize White's arguments \cite[Section 7]{Ch} for mean curvature flow of curves to higher dimensions to prove the claim.

\section{Hausdorff convergence to asymptotic cones}
In this section we study the asymptotic behaviors (in the Hausdorff metric) of noncompact self-shrinkers of general dimensions. 

\subsection{Weak mean curvature flows}
Let $\mathbf{T}\colon\Real^{n+1}\times\Real\to\Real$ denote the projection onto the time axis given by $\mathbf{T}(\xX,t)=t$. Let $X$ be a connected open subset of $\Real^{n+1}\times\Real$ in space-time. Denote by $X_t=\set{\xX\in\Real^{n+1} \colon (\xX,t)\in X}$. As $X$ is open connected, $\mathbf{T}(X)$ is an open interval in $\Real$. 
A \emph{Brakke flow in $X$} is a family $\mathcal{K}=\set{\mu_t}_{t\in \mathbf{T}(X)}$, where each $\mu_t$ is a Radon measure on $X_t$, satisfying that there is a dense set $J\subset \mathbf{T}(X)$ so that if $t\in J$, then $\mu_t$ is $n$-rectifiable and for all $\phi\in C^2_c(\Real^{n+1}; \Real^{\ge 0})$,
$$
\bar{\partial}_t \int \phi\, d\mu_t \leq \int -\phi^2 |\mathbf{H}|^2+(D\phi)^\perp\cdot\mathbf{H}\, d\mu_t
$$
where $\bar{\partial}_t$ denotes the upper derivative, $(D\phi)^\perp$ is the normal part of $D\phi$ to the approximate tangent plane, and $\mathbf{H}$ is the generalized mean curvature vector; otherwise, for all $\phi\in C^2_c(\Real^{n+1}; \Real^{\ge 0})$,
$$
\bar{\partial}_t \int\phi\, d\mu_t=-\infty.
$$
This agrees with the standard definition of Brakke flows when $X=\Real^{n+1}\times (a,b)$; cf. \cite[Section 6]{IlmanenElliptic}. A Brakke flow in $X$, $\mathcal{K}=\set{\mu_t}_{t\in \mathbf{T}(X)}$, is \emph{integral} if $\mu_t$ is integer $n$-rectifiable for a.e. $t\in \mathbf{T}(X)$.

For a set $\Omega\subset\Real^{n+1}$, an $\xX\in\Real^{n+1}$, and a $\tau>0$, let
\begin{enumerate}
\item $\Omega+\xX=\set{\yY\in\Real^{n+1} \colon \yY-\xX\in\Omega}$, the translation of $\Omega$ by $\xX$; and;
\item $\tau\Omega=\set{\yY\in\Real^{n+1} \colon \tau^{-1}\yY\in\Omega}$, the scaling of $\Omega$ by $\tau$.
\end{enumerate}
Likewise, for a set $\Omega\subset\Real^{n+1}\times\Real$ in space-time, an $(\xX,t)\in\Real^{n+1}\times\Real$, and a $\tau>0$, let
\begin{enumerate}
\item $\Omega+(\xX,t)=\set{(\yY,s)\in\Real^{n+1}\times\Real \colon (\yY-\xX,s-t)\in \Omega}$, the space-time translation of $\Omega$ by $(\xX,t)$; and;
\item $\tau \Omega=\set{(\yY,s)\in\Real^{n+1}\times\Real \colon (\tau^{-1}\yY,\tau^{-2}s)\in \Omega}$, the parabolic scaling of $\Omega$ by $\tau$.
\end{enumerate}

Given $(\yY,\tau)\in\Real^{n+1}\times\Real^+$ and $\mu$ an integer $n$-rectifiable Radon measure, define the rescaled measure $\mu^{\yY,\tau}$ by
$$
\mu^{\yY,\tau}(\Omega)=\tau^n \mu(\tau^{-1} \Omega+\yY).
$$
For a Brakke flow $\mathcal{K}=\set{\mu_t}_{t\in \mathbf{T}(X)}$ in $X$, a point $(\yY,s)\in X$, and a $\tau>0$, we define the parabolically rescaled flow in $\hat{X}=\tau (X-(\yY,s))$, 
$$
\mathcal{K}^{(\yY,s),\tau}=\set{\mu^{(\yY,s),\tau}_t}_{t\in \mathbf{T}(\hat{X})}
$$
where 
$$
\mu^{(\yY,s),\tau}_t=\mu_{s+\tau^{-2}t}^{\yY,\tau}.
$$

We summarize the results of \cite[Section 10]{WhStrata} that are needed for our purpose. The expert should feel free to skip the following proposition.
\begin{prop} \label{WhiteProp}
Let $\mathcal{K}=\set{\mu_t}_{t\in \mathbf{T}(X)}$ be an integral Brakke flow in a connected open set $X\subset\Real^{n+1}\times\Real$ in space-time. Suppose
\begin{equation} \label{ArearatioEqn}
\sup_{t\in \mathbf{T}(X)} \sup_{(\xX,R)\in\Real^{n+1}\times\Real^+} R^{-n} \mu_t(X_t\cap B^{n+1}_R(\xX))<+\infty.
\end{equation}
Then the following properties hold.
\begin{enumerate}
\item \label{GaussDensity} Given $(\yY,s)\in X$,
$$
\lim_{t \to s^-} (4\pi (s-t))^{-\frac{n}{2}} \int \phi_{\yY}(\xX) {\rm e}^{\frac{|\xX-\yY|^2}{4(t-s)}}\, d\mu_t(\xX)
$$
exists, where $\phi_{\yY}\colon \Real^{n+1}\to [0,1]$ is a smooth cutoff function about $\yY$. Moreover, the limit is independent of the choice of cutoff functions, and it is called the Gaussian density of $\mathcal{K}$ at $(\yY,s)$ denoted by $\Theta_{(\yY,s)} (\mathcal{K})$.
\item The function on $X$ defined by
$$
(\yY,s)\mapsto\Theta_{(\yY,s)} (\mathcal{K})
$$
is upper semi-continuous.
\item \label{TangentFlow} Given a point $(\yY,s)\in X$ with $\Theta_{(\yY,s)}(\mathcal{K}) \ge 1$ and a sequence $\bar{\tau}_i\to +\infty$, there exists a subsequence $\bar{\tau}_{i_k}$ and an integral Brakke flow $\mathcal{T}=\set{\nu_t}_{t\in\Real}$ in $\Real^{n+1}\times\Real$ such that $\mathcal{K}^{(\yY,0),\bar{\tau}_{i_k}}\to\mathcal{T}$, that is, $\mu_t^{(\yY,s),\bar{\tau}_{i_k}}\to\nu_t$ in the sense of Radon measures for all $t\in\Real$. The flow $\mathcal{T}$ is said to be a tangent flow to $\mathcal{K}$ at $(\yY,s)$.

Furthermore, $\mathcal{T}$ is backwardly self-similar about $(\oO,0)$. That is, for $t<0$, $\nu_t=\nu_t^{(\oO,0),\tau}$ for all $\tau>0$; $\nu_{-1}$ satisfies that
$$
\mathbf{H}+\frac{\xX^\perp}{2}=0 \mbox{ for $\nu_{-1}$-a.e. $\xX$};
$$
and $\Theta_{(\oO,0)}(\mathcal{T})=\Theta_{(\yY,s)} (\mathcal{K})$.
\end{enumerate}
\end{prop}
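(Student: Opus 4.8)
The plan is to adapt the arguments of White \cite[Section 10]{WhStrata} --- which rest on Huisken's monotonicity formula and on Ilmanen's compactness theorem for integral Brakke flows \cite{IlmanenElliptic} --- to a Brakke flow that is defined only on the connected open set $X$. The single new point is that the localized monotonicity quantities must be shown to remain finite and almost monotone, and that is exactly the role played by the uniform area-ratio bound \eqref{ArearatioEqn}.

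Fix $(\yY,s)\in X$, write
\[
\rho_{(\yY,s)}(\xX,t)=(4\pi(s-t))^{-\frac{n}{2}}\,{\rm e}^{\frac{|\xX-\yY|^2}{4(t-s)}}\qquad(t<s),
\]
and choose a cutoff $\phi_{\yY}$ supported in $B^{n+1}_{2r}(\yY)$ with $\phi_{\yY}\equiv 1$ on $B^{n+1}_{r}(\yY)$, where $r$ is small enough that $\bar{B}^{n+1}_{2r}(\yY)\times[s-r^2,s]\subset X$. Testing Brakke's inequality (in the form that allows time-dependent test functions, cf.\ \cite{IlmanenElliptic}) against $\phi_{\yY}\,\rho_{(\yY,s)}$ yields the localized Huisken identity
\[
\bar{\partial}_t\!\int\phi_{\yY}\,\rho_{(\yY,s)}\,d\mu_t\;\le\;\int\rho_{(\yY,s)}\Big(-\phi_{\yY}\,\big|\mathbf{H}+\tfrac{(\xX-\yY)^\perp}{2(s-t)}\big|^2+E(\phi_{\yY})\Big)\,d\mu_t,
\]
where $E(\phi_{\yY})$ is a universal expression in $D\phi_{\yY}$ and $D^2\phi_{\yY}$ supported on the annulus $B^{n+1}_{2r}(\yY)\setminus B^{n+1}_{r}(\yY)$, on which $\rho_{(\yY,s)}$ and its spatial derivatives are bounded by $C(r)\,{\rm e}^{-c(r)/(s-t)}$ as $t\to s^-$. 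Using \eqref{ArearatioEqn} to bound $\mu_t$ of that annulus, the error integral $\int_{t_0}^{t}\!\int E(\phi_{\yY})\,d\mu_{t'}\,dt'$ converges as $t\to s^-$, so $t\mapsto\int\phi_{\yY}\,\rho_{(\yY,s)}\,d\mu_t$ is a non-increasing function plus a convergent error. Hence the limit in item \eqref{GaussDensity} exists and is finite, and since two cutoffs differ only by a term supported away from $\yY$ --- with the same exponentially small contribution --- the limit is independent of the cutoff. For the upper semi-continuity in the second item, for each fixed $r$ the map $(\yY,s)\mapsto\int\phi_{\yY}\,\rho_{(\yY,s)}\,d\mu_{s-r^2}$ is continuous on $X$ by dominated convergence, $\Theta_{(\yY,s)}(\mathcal{K})$ is the limit of these quantities as $r\to 0$ up to a uniformly controlled error, and a decreasing limit (modulo uniform error) of continuous functions is upper semi-continuous.

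For item \eqref{TangentFlow}, the parabolically rescaled flows $\mathcal{K}^{(\yY,s),\bar{\tau}_i}$ satisfy \eqref{ArearatioEqn} with the same constant, since the area-ratio bound is invariant under parabolic rescaling and translation; hence by the compactness theorem for integral Brakke flows a subsequence $\bar{\tau}_{i_k}$ converges, in the sense of Radon measures at every time, to an integral Brakke flow $\mathcal{T}=\set{\nu_t}_{t\in\Real}$ on all of $\Real^{n+1}\times\Real$. For each fixed $t<0$, weak convergence together with the usual Gaussian tail estimate coming from \eqref{ArearatioEqn} gives $\int\rho_{(\oO,0)}(\cdot,t)\,d\nu_t=\lim_k\int\rho_{(\oO,0)}(\cdot,t)\,d\mu^{(\yY,s),\bar{\tau}_{i_k}}_t$; by the invariance of the Gaussian-weighted mass under parabolic rescaling and translation, the integral on the right equals $\int\rho_{(\yY,s)}(\cdot,s+\bar{\tau}_{i_k}^{-2}t)\,d\mu_{s+\bar{\tau}_{i_k}^{-2}t}$ up to a cutoff error tending to $0$, and since $s+\bar{\tau}_{i_k}^{-2}t\to s^-$ this converges to $\Theta_{(\yY,s)}(\mathcal{K})$ by item \eqref{GaussDensity}. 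Therefore $t\mapsto\int\rho_{(\oO,0)}(\cdot,t)\,d\nu_t$ is the constant $\Theta_{(\yY,s)}(\mathcal{K})$ on $(-\infty,0)$, which in particular identifies $\Theta_{(\oO,0)}(\mathcal{T})$ with $\Theta_{(\yY,s)}(\mathcal{K})$. Feeding this constancy into Brakke's inequality for $\mathcal{T}$ --- whose spatial cutoff may now be sent to infinity, using \eqref{ArearatioEqn} for $\mathcal{T}$ --- forces $\int_{t_1}^{t_2}\!\int\rho_{(\oO,0)}\,\big|\mathbf{H}+\tfrac{\xX^\perp}{2(-t)}\big|^2\,d\nu_t\,dt=0$ for all $t_1<t_2<0$, so $\mathbf{H}+\tfrac{\xX^\perp}{2(-t)}=0$ for $\nu_t$-a.e.\ $\xX$ and a.e.\ $t<0$; at $t=-1$ this is the shrinker equation, and, exactly as in \cite[Section 10]{WhStrata}, the vanishing of the monotonicity defect is what makes $\mathcal{T}$ backwardly self-similar about $(\oO,0)$, i.e.\ $\nu_t=\nu_t^{(\oO,0),\tau}$ for $t<0$ and every $\tau>0$.

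The step I expect to demand the most care is the first one: legitimizing Brakke's inequality with the time-singular weight $\phi_{\yY}\,\rho_{(\yY,s)}$ and verifying that the annular error $E(\phi_{\yY})$ is genuinely negligible, all within the localized setup in which the flow lives only on $X$ and the cutoff must be supported well inside $X_t$. In the classical global situation this is routine; here one must invoke \eqref{ArearatioEqn} both to control that error and to ensure that the limit flow $\mathcal{T}$ inherits the mass bounds needed for the compactness theorem to apply on all of space-time.
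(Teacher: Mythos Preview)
The paper does not supply a proof of this proposition at all: it is stated as a summary of results from \cite[Section~10]{WhStrata}, with the remark that ``the expert should feel free to skip the following proposition.'' So there is no proof in the paper to compare against; the author simply cites White.

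Your proposal is therefore doing more than the paper does. As a sketch of White's arguments it is accurate in outline: the localized Huisken monotonicity formula with a cutoff, the observation that the annular error is integrable thanks to the exponential decay of $\rho_{(\yY,s)}$ away from $\yY$ together with the area-ratio bound \eqref{ArearatioEqn}, upper semi-continuity via an almost-decreasing family of continuous quantities, Ilmanen's compactness for integral Brakke flows, and the passage from constancy of the monotone quantity to backward self-similarity. These are indeed the ingredients in \cite[Section~10]{WhStrata}. Your closing caveat about legitimizing the time-singular test function and keeping the cutoff supported in $X_t$ is the right place to be careful, and White handles precisely this by working with the weighted quantity only for $t$ in a short interval $[s-r^2,s)$ with $r$ chosen so that the relevant space-time cylinder sits inside $X$; your choice of $r$ already reflects this.
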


\subsection{Asymptotic properties for self-shrinking ends}
For an end $M$ of a noncompact self-shrinker in $\Real^{n+1}$, define the \emph{associated Brakke flow} $\mathcal{K}=\set{\mu_t}_{t\in\Real}$ by
\begin{equation} \label{ShrinkerFlowEqn}
\mu_t=\left\{
\begin{array}{cc}
0 & t\geq 0 \\
\mathcal{H}^n\lfloor \sqrt{-t}\, M & t<0,
\end{array}
\right.
\end{equation}
where $\mathcal{H}^n$ is the $n$-dimensional Hausdorff measure. 
One can verify that $\mathcal{K}$ is an integral Brakke flow in the open connected set 
$$
X=\Real^{n+1}\times\Real\setminus\bigcup_{t\leq 0} (\sqrt{-t}\cdot\partial M)\times\set{t}.
$$
Note that $X_0=\Real^{n+1}\setminus\set{\oO}$. For $\yY\in\Real^{n+1}\setminus\set{\oO}$ and $\tau>0$ the parabolic rescaling of $\mathcal{K}$ about $(\yY,0)$ by $\tau$ is
$$
\mathcal{K}^{(\yY,0),\tau}=\set{\mu_t^{(\yY,0),\tau}}_{t\in\Real}
$$
where
\begin{equation} \label{ScalingShrinkerEqn}
\mu_t^{(\yY,0),\tau}=\left\{
\begin{array}{cc}
\mathcal{H}^n\lfloor (\sqrt{-t}\, M-\tau\yY) & t<0 \\
0 & t\geq 0.
\end{array}
\right.
\end{equation}
By \cite[Theorem 1.3]{CZ} (cf. \cite[Theorem 1.1]{DX}), $\mathcal{K}$ satisfies \eqref{ArearatioEqn}. Thus, Proposition \ref{WhiteProp} applies to $\mathcal{K}$.

For a set $\Omega\subset\Real^{n+1}$ and $R>0$, denote by
$$
\mathcal{N}_R(\Omega)=\bigcup_{\xX\in\Omega} \bar{B}^{n+1}_R(\xX).
$$
Given compact sets $K,K^\prime\subset\Real^{n+1}$ the \emph{Hausdorff distance} between $K$ and $K^\prime$ is 
$$
\mathrm{dist}_H(K,K^\prime)=\inf\set{R>0 \colon K^\prime\subset\mathcal{N}_R(K), K\subset\mathcal{N}_R(K^\prime)}.
$$
It is known that $\mathrm{dist}_H$ induces a metric on the space of all compact sets of $\Real^{n+1}$ which is called the \emph{Hausdorff metric}.

We prove a claim of Ilmanen \cite[footnote on p. 8]{IlmanenLec} -- see \cite[Theorem 12]{Song} for an alternative proof.
\begin{prop} \label{HausdorffAsympProp}
Let $M$ be an end of a noncompact self-shrinker in $\Real^{n+1}$, and let $\mathcal{K}=\set{\mu_t}_{t\in\Real}$ be the associated Brakke flow for $M$. Then
\begin{enumerate}
\item  \label{AsympCone}  The set
$$
\Lambda=\set{\yY\in\Real^{n+1}\setminus\set{\oO} \colon \Theta_{(\yY,0)}(\mathcal{K})\geq 1}
$$
is a cone (i.e., $\tau\Lambda=\Lambda$ for all $\tau>0$), and the link of the cone, $\Lambda\cap\mathbb{S}^n$, is nonempty compact. 
\item \label{HausdorffAsymp} For all $R>0$, as $t\to 0^-$, $(\sqrt{-t}\, \bar{M})\cap\bar{B}^{n+1}_R\to\bar{\Lambda}\cap\bar{B}^{n+1}_R$ in the Hausdorff metric. 
\end{enumerate}
Thus the cone $\Lambda$ is called the asymptotic cone of $M$.
\end{prop}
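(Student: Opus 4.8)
The plan is to derive the proposition from the self-similarity of $\mathcal{K}$ about $(\oO,0)$ together with two standard structural facts for an integral Brakke flow $\mathcal{K}$ satisfying \eqref{ArearatioEqn}: (i) if $(\yY,s)$ lies in the support of $\mathcal{K}$, then $\Theta_{(\yY,s)}(\mathcal{K})\ge 1$; and (ii) if $\Theta_{(\yY,0)}(\mathcal{K})\ge 1$, then $\mathrm{dist}(\yY,\mathrm{spt}\,\mu_t)\to 0$ as $t\to 0^-$. For (i) I would argue: if $\Theta_{(\yY,s)}(\mathcal{K})<1$, then by the upper semi-continuity of the density in Proposition \ref{WhiteProp} the Gaussian density ratios at a suitably small fixed scale are $<1+\epsilon_0$ throughout a space-time neighborhood of $(\yY,s)$, so by White's local regularity theorem \cite{WhReg} the flow there is a smooth (possibly empty) graphical flow; having density $<1$ at $(\yY,s)$, it must be empty near $(\yY,s)$. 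For (ii) I would argue directly: if $\mu_{t_j}$ vanished on a fixed ball $B^{n+1}_\delta(\yY)$ along some $t_j\to 0^-$, then decomposing $\mathbb{R}^{n+1}\setminus B^{n+1}_\delta(\yY)$ into dyadic annuli and invoking \eqref{ArearatioEqn} shows the Gaussian density ratio of $\mathcal{K}$ at $(\yY,0)$ at scale $\sqrt{-t_j}$ tends to $0$; since this ratio converges to $\Theta_{(\yY,0)}(\mathcal{K})$ by item \eqref{GaussDensity} of Proposition \ref{WhiteProp}, we would get $\Theta_{(\yY,0)}(\mathcal{K})=0$, contradicting the hypothesis.

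Granting this, I would first note that a direct computation from \eqref{ShrinkerFlowEqn} gives $\mu^{(\oO,0),\lambda}_t=\mu_t$ for every $\lambda>0$, so $\mathcal{K}^{(\oO,0),\lambda}=\mathcal{K}$ and $\mathrm{spt}\,\mu_t=\sqrt{-t}\,\bar M$ for $t<0$. Since the Gaussian density is invariant under parabolic rescaling about $(\oO,0)$, $\Theta_{(\yY,0)}(\mathcal{K})=\Theta_{(\lambda\yY,0)}(\mathcal{K})$ for all $\lambda>0$, so $\Lambda$ is a cone; and since the density is upper semi-continuous, $\Lambda$ is closed in $\mathbb{R}^{n+1}\setminus\{\oO\}$, whence $\bar\Lambda=\Lambda\cup\{\oO\}$ once $\Lambda\neq\emptyset$, and $\Lambda\cap\mathbb{S}^n$ is a closed, hence compact, subset of $\mathbb{S}^n$. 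To see $\Lambda\neq\emptyset$, use that $M$ is noncompact and properly embedded: pick $\zZ_i\in M$ with $|\zZ_i|\to\infty$, set $\vV_i=\zZ_i/|\zZ_i|$ and $t_i=-|\zZ_i|^{-2}$, so that $\vV_i=\sqrt{-t_i}\,\zZ_i\in\sqrt{-t_i}\,\bar M=\mathrm{spt}\,\mu_{t_i}$ and hence $(\vV_i,t_i)\in\mathrm{spt}\,\mathcal{K}$. Passing to a subsequence with $\vV_i\to\vV\in\mathbb{S}^n$, the points $(\vV_i,t_i)$ converge to $(\vV,0)$, which lies in $X$ since $\vV\neq\oO$ (the deleted set meets the slice $\{t=0\}$ only at $(\oO,0)$); therefore $(\vV,0)\in\mathrm{spt}\,\mathcal{K}$, and fact (i) yields $\Theta_{(\vV,0)}(\mathcal{K})\ge 1$, i.e. $\vV\in\Lambda$.

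For item \eqref{HausdorffAsymp}, by self-similarity it suffices to prove $K_\lambda:=\lambda\bar M\cap\bar B^{n+1}_R\to\bar\Lambda\cap\bar B^{n+1}_R$ in the Hausdorff metric as $\lambda\to 0^+$ (take $\lambda=\sqrt{-t}$). Fixing some $\zZ_0\in M$, each $K_\lambda$ is a nonempty compact subset of $\bar B^{n+1}_R$ containing $\lambda\zZ_0\to\oO$, so by the Blaschke selection theorem every sequence $\lambda_i\to 0^+$ admits a subsequence along which $K_{\lambda_i}$ converges to a compact $K_\infty\subseteq\bar B^{n+1}_R$ with $\oO\in K_\infty$; it then suffices to prove $K_\infty=\bar\Lambda\cap\bar B^{n+1}_R$ for every such limit. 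The inclusion $K_\infty\subseteq\bar\Lambda\cap\bar B^{n+1}_R$ follows exactly as in the nonemptiness step: any $\pP\in K_\infty\setminus\{\oO\}$ is a limit of points $\pP_i\in\lambda_i\bar M$, so $(\pP_i,-\lambda_i^2)\in\mathrm{spt}\,\mathcal{K}$ converge to $(\pP,0)\in X$, hence $(\pP,0)\in\mathrm{spt}\,\mathcal{K}$ and $\pP\in\Lambda$ by (i), while $\oO\in\bar\Lambda$. For the reverse inclusion, since $\bar\Lambda=\Lambda\cup\{\oO\}$ and $\Lambda$ is a cone, it is enough to show $\yY\in K_\infty$ for every $\yY\in\Lambda$ with $|\yY|<R$, the case $|\yY|=R$ following by approximating $\yY$ with $(1-j^{-1})\yY\in\Lambda$; and for such $\yY$, fact (ii) gives $\mathrm{dist}(\yY,\lambda\bar M)\to 0$ as $\lambda\to 0^+$, the nearest points of $\lambda\bar M$ eventually lying in $\bar B^{n+1}_R$ since they converge to $\yY$, so $\mathrm{dist}(\yY,K_\lambda)\to 0$ and $\yY\in K_\infty$. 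As all subsequential limits coincide with $\bar\Lambda\cap\bar B^{n+1}_R$, this proves item \eqref{HausdorffAsymp}.

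The main obstacle, such as it is, is fact (i): it is the one ingredient not recorded in Proposition \ref{WhiteProp}, and it is precisely what equates ``belonging to the support at the extinction time $0$'' with ``having Gaussian density at least $1$'' — the equivalence that powers both the nonemptiness of $\Lambda$ and the outer half of the Hausdorff convergence. Beyond that, the only care needed is bookkeeping around the space-time domain $X$: that $\mathrm{spt}\,\mathcal{K}$ is closed in $X$, that the limit points constructed above actually lie in $X$ (the deleted set meets $\{t=0\}$ only at $(\oO,0)$, and $|\vV|=|\pP|\neq 0$), and that \eqref{ArearatioEqn} — valid for $\mathcal{K}$ by \cite{CZ,DX} — is in force for the dyadic tail estimate underlying (ii).
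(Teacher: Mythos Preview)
Your proof is correct and follows essentially the same route as the paper. The paper argues the two Hausdorff inclusions by direct contradiction rather than via Blaschke selection, but the content is the same: your fact (ii) is the paper's case (a), and your fact (i) is the paper's case (b). The only notable difference is that where you establish (i) by appealing to White's local regularity theorem, the paper uses the more elementary observation that $\Theta_{(\xX,t)}(\mathcal{K})=1$ at any smooth interior point $\xX\in\sqrt{-t}\,M$ of the flow, and then applies the upper semi-continuity of the Gaussian density along a sequence $(\xX_i,t_i)\to(\xX_0,0)$; this avoids invoking \cite{WhReg} and is all that is actually needed here.
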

\begin{proof}
For any $\yY\in\Real^{n+1}\setminus\set{\oO}$ and $\tau>0$,
\begin{align*}
\Theta_{(\tau\yY,0)} (\mathcal{K}) & = \lim_{t\to 0^-} (-4\pi t)^{-\frac{n}{2}} \int\phi_{\tau\yY}(\xX) {\rm e}^{\frac{|\xX-\tau\yY|^2}{4t}} \, d\mu_t(\xX) \\
& = \lim_{t\to 0^-} (-4\pi\tau^{-2}t)^{-\frac{n}{2}} \int\phi_{\tau\yY}(\tau\xX) {\rm e}^{\frac{|\xX-\yY|^2}{4\tau^{-2}t}} \, d\mu_{\tau^{-2}t}(\xX),
\end{align*}
where we used, in the last inequality, that $\phi_{\tau\yY}(\tau\cdot)$ is a cutoff function about $\yY$ and the Gaussian density is independent of the choice of cutoff functions. Thus, for every $\tau>0$, $\yY\in\Lambda$ iff $\tau\yY\in\Lambda$, implying $\Lambda=\tau\Lambda$. Moreover, by the upper semi-continuity of Gaussian density, the set $\Lambda\cap\mathbb{S}^n$ is nonempty compact. Hence we have established item \eqref{AsympCone}.

For item \eqref{HausdorffAsymp}, let us fix any $R>0$. Denote by $M_t^R=(\sqrt{-t}\, \bar{M})\cap\bar{B}^{n+1}_R$ and by $\Lambda^R=\bar{\Lambda}\cap\bar{B}^{n+1}_R$. By item \eqref{AsympCone}, $\bar{\Lambda}=\Lambda\cup\set{\oO}$. We argue by contradiction. Suppose that $M^R_t\not\to\Lambda^R$ in the Hausdorff metric as $t\to 0^-$. There was an $0<\epsilon\ll R$ and a sequence $t_i\to 0^-$ so that for each $i$, (a) $\Lambda^R\not\subset\mathcal{N}_\epsilon (M^R_{t_i})$ or (b) $M^R_{t_i}\not\subset\mathcal{N}_\epsilon (\Lambda^R)$. 

If there is a subsequence $t_{i_k}\to 0^-$ for which (a) holds, then there is a sequence, $\xX_{i_k}$, of points in $\Lambda^R\setminus\mathcal{N}_\epsilon (M^R_{t_{i_k}})$. As $\Lambda^R$ is compact, after passing to a subsequence and relabeling, $\xX_{i_k}\to\xX_0$ for some $\xX_0\in\Lambda^R$. By the choice of $\xX_{i_k}$, $\xX_0\notin\mathcal{N}_{\frac{\epsilon}{2}} (M^R_{t_{i_k}})$ for $t_{i_k}$ sufficiently close to $0$. In particular, $\xX_0\neq\oO$. Let $\xX^\prime_0=(1-\epsilon(4R)^{-1})\xX_0$.
Thus, by item \eqref{AsympCone}, $\xX^\prime_0\in\Lambda$. However, by the triangle inequality, $B^{n+1}_{\frac{\epsilon}{8}}(\xX^\prime_0)\cap \sqrt{-t_{i_k}}\, \bar{M}=\emptyset$ for $t_{i_k}$ sufficiently close to $0$. Hence it follows from the definition of Gaussian density that $\Theta_{(\xX^\prime_0,0)}(\mathcal{K})=0$. This contradicts the definition of $\Lambda$.

If there is a subsequence $t_{i_k^\prime}\to 0^-$ for which (b) holds, then there is a sequence of points, $\tilde{\xX}_{i^\prime_k}$, such that $\tilde{\xX}_{i^\prime_k}\in M^R_{t_{i^\prime_k}}\setminus\mathcal{N}_\epsilon (\Lambda^R)$. After passing to a subsequence and relabeling, $\tilde{\xX}_{i^\prime_k}\to\tilde{\xX}_0$ for some $\tilde{\xX}_0\in\bar{B}^{n+1}_R\setminus\set{\oO}$. However, for $t_{i^\prime_k}$ sufficiently close to $0$, $\tilde{\xX}_{i^\prime_k}\in M^R_{t_{i^\prime_k}}\setminus B^{n+1}_{\frac{\epsilon}{2}}$, so $\Theta_{(\tilde{\xX}_{i^\prime_k},t_{i^\prime_k})}(\mathcal{K})=1$. Thus, by the upper semi-continuity of Gaussian density, $\Theta_{(\tilde{\xX}_0,0)}(\mathcal{K})\geq 1$, implying $\tilde{x}_0\in\Lambda^R$. Hence, $\tilde{x}_{i^\prime_k}\in\mathcal{N}_\epsilon(\Lambda^R)$ for $i^\prime_k$ large. This is a contradiction.

Therefore, $\lim_{t\to 0^-} M_t^R=\Lambda^R$ in the Hausdorff metric. As $R$ is arbitrary, item \eqref{HausdorffAsymp} follows immediately.
\end{proof}

To study the regularity of the asymptotic cones of self-shrinking ends, as a first step, we prove a result on the structure of tangent flows to the associated Brakke flows at points in the cones.
\begin{prop} \label{SplitProp}
Suppose that $M$ is an end of a noncompact self-shrinker in $\Real^{n+1}$. Let $\mathcal{K}=\set{\mu_t}_{t\in\Real}$ be the associated Brakke flow for $M$, and let $\Lambda$ be the asymptotic cone of $M$. Then given $\yY\in\Lambda$, any tangent flow $\mathcal{T}=\set{\nu_t}_{t\in\Real}$ to $\mathcal{K}$ at $(\yY,0)$ satisfies that $\nu_t=(\mathcal{H}^1\lfloor\Real_\yY)\times\hat{\nu}_t$ where $\set{\hat{\nu}_t}_{t\in\Real}$ is an integral Brakke flow in $\Real^n\times\Real$ that is backwardly self-similar about $(\oO,0)$.
Moreover, if $n=2$ and $M$ has finite genus, then the support of $\nu_{-1}$ is $\Real_\yY\times\Real$ or $\Real_\yY\times\mathbb{S}^1$.
\end{prop}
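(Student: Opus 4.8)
The plan is to obtain the line splitting $\nu_t=(\mathcal{H}^1\lfloor\Real_\yY)\times\hat\nu_t$ from an exact translation symmetry of the parabolic rescalings, and then, in the case $n=2$ with $M$ of finite genus, to deduce the last assertion by combining Ilmanen's smoothness theorem with the Abresch--Langer classification.

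\emph{First assertion.} From \eqref{ScalingShrinkerEqn}, for every $a\in\Real$ and $\tau>a$ the time-by-time spatial translate of $\mathcal{K}^{(\yY,0),\tau}$ by $a\yY$ satisfies
$$
\mathcal{K}^{(\yY,0),\tau}+(a\yY,0)=\mathcal{K}^{(\yY,0),\tau-a},
$$
since at each $t<0$ both sides equal $\mathcal{H}^n\lfloor(\sqrt{-t}\,M-(\tau-a)\yY)$. Let $\mathcal{T}=\set{\nu_t}$ be a tangent flow to $\mathcal{K}$ at $(\yY,0)$; after relabeling, $\mathcal{K}^{(\yY,0),\bar\tau_i}\to\mathcal{T}$ for some $\bar\tau_i\to+\infty$ by item \eqref{TangentFlow} of Proposition \ref{WhiteProp}. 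Fix $a$ and set $\tau=\bar\tau_i$ in the identity: the left side converges to $\mathcal{T}+(a\yY,0)$, while the right side is the parabolic rescaling of $\mathcal{K}^{(\yY,0),\bar\tau_i}$ by the factor $(\bar\tau_i-a)/\bar\tau_i\to 1$, which converges to $\mathcal{T}$ because parabolic rescaling depends continuously on its parameter (here one uses the uniform local area bound \eqref{ArearatioEqn} and continuity of the limit flow at times $t<0$). Hence $\mathcal{T}+(a\yY,0)=\mathcal{T}$ for all $a\in\Real$, so each $\nu_t$ is invariant under all translations by $\Real_\yY$. Since $\mathcal{T}$ is integral and backwardly self-similar about $(\oO,0)$, every $\nu_t$ with $t<0$ is a dilate of $\nu_{-1}$, so $\nu_{-1}$ is integer $n$-rectifiable; combined with $\Real_\yY$-invariance this forces $\nu_{-1}=(\mathcal{H}^1\lfloor\Real_\yY)\times\hat\nu_{-1}$ with $\hat\nu_{-1}$ integer $(n-1)$-rectifiable on $\Real_\yY^\perp\cong\Real^n$, and likewise for all $t$. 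Testing the Brakke inequality against split test functions shows $\set{\hat\nu_t}$ is an integral Brakke flow in $\Real^n\times\Real$, backwardly self-similar about $(\oO,0)$.

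\emph{Second assertion.} Let $n=2$ and suppose $M$ has finite genus; then $\mathcal{K}$ has finite genus, so by the proof of \cite[Theorem~2]{IlmanenSing} --- which uses White's stratification \cite{WhStrata} and the local Gauss--Bonnet estimate to rule out branch points, triple junctions, and mutually crossing sheets --- the support of $\nu_{-1}$ is a smooth embedded self-shrinker $\Sigma$ in $\Real^3$ (of possibly higher multiplicity). By the first assertion, $\Sigma$ is $\Real_\yY$-invariant, so $\Sigma=\Real_\yY\times\gamma$ for a smooth embedded self-shrinking curve $\gamma$ in $\Real_\yY^\perp\cong\Real^2$. Every such $\gamma$ is a disjoint union of lines through $\oO$ and the single circle $\sqrt2\,\mathbb{S}^1$ about $\oO$ (for the compact case one invokes the Abresch--Langer classification \cite{AL}); since two distinct such lines meet, and a line meets the circle, $\gamma$ is one line through $\oO$ or $\sqrt2\,\mathbb{S}^1$, whence $\Sigma=\Real_\yY\times\Real$ or $\Real_\yY\times\mathbb{S}^1$. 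The routine but genuinely technical point is the continuity-of-rescaling step in the first assertion (handled by \eqref{ArearatioEqn} via the standard compactness theory for integral Brakke flows); the substantive input for the second assertion is Ilmanen's theorem, i.e.\ passing from finite genus to a smooth embedded support, after which the splitting and Abresch--Langer are elementary.
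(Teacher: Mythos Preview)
Your strategy for both assertions is sound and matches the paper's in spirit; the second assertion is essentially identical to the paper's argument (Ilmanen's smoothness theorem together with the splitting and Abresch--Langer). For the first assertion, your route via the exact translation identity
\[
\mathcal{K}^{(\yY,0),\tau}+(a\yY,0)=\mathcal{K}^{(\yY,0),\tau-a}
\]
is a pleasant and more elementary alternative to the paper's reference to White's stratification (via \cite[Lemma~4.4]{BW}).

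There is, however, a genuine soft spot in the ``continuity-of-rescaling'' step. You assert that $\lambda_i\cdot\mathcal{K}^{(\yY,0),\bar\tau_i}\to\mathcal{T}$ when $\lambda_i\to 1$, but at a fixed time $t<0$ the rescaled slice is the \emph{time-$\lambda_i^{-2}t$} slice of $\mathcal{K}^{(\yY,0),\bar\tau_i}$, spatially dilated by $\lambda_i$. So you need a uniform-in-$i$ modulus of time-continuity for $s\mapsto\mu^{(\yY,0),\bar\tau_i}_s(\phi)$ near $s=t$. The obvious estimate uses $\tfrac{d}{ds}\mu_s(\phi)=\int(-\phi|H|^2+D\phi\cdot\mathbf{H})\,d\mu_s$; but on the support of $\phi$ one only has $|H|\lesssim\bar\tau_i$ (since $|H_M(\xX)|\le|\xX|/2$ and $|\xX|\sim\bar\tau_i$ there), so the $|H|^2$ term contributes $\sim\bar\tau_i^2$, while $|s_i-t|\sim\bar\tau_i^{-1}$. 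The product need not go to zero, and the step as written is not justified by the area bound alone.

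A clean fix that stays within your framework: since $\bar\tau_i-a\to+\infty$, any subsequential limit of $\mathcal{K}^{(\yY,0),\bar\tau_i-a}$ is a tangent flow to $\mathcal{K}$ at $(\yY,0)$ and hence backwardly self-similar about $(\oO,0)$ by Proposition~\ref{WhiteProp}\eqref{TangentFlow}. But your identity already forces every such subsequential limit to equal $\mathcal{T}+(a\yY,0)$, which is self-similar about $(a\yY,0)$ as well. A Brakke flow self-similar about two distinct spatial centers at time $0$ is translation-invariant along the line joining them (compose the two dilation symmetries), giving $\mathcal{T}+(a\yY,0)=\mathcal{T}$ directly. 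This is essentially the content of the stratification argument the paper cites, rederived from your identity without the problematic continuity step.
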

\begin{proof}
By our definition, there exists a sequence $\bar{\tau}_i\to +\infty$ so that $\mathcal{K}^{(\yY,0),\bar{\tau}_i}\to\mathcal{T}$. It suffices to prove that $\mathcal{T}$ is translation invariant along the direction of $\yY$. This follows from the self-similarity of $\mathcal{K}$ and White's stratification theorem \cite{WhStrata}. The reader may consult the proof of \cite[Lemma 4.4]{BW} for more details.

Finally we restrict to  $n=2$. With the replacement of the monotonicity formula by its local version derived in \cite[Section 10]{WhStrata}, the arguments in the proof of \cite[Theorem 2]{IlmanenSing} give that the support of $\nu_{-1}$ is smooth. By \cite{AL} the only self-shrinkers in $\Real^2$ are $\Real$ and $\mathbb{S}^1$. Combining these with the discussions in the preceding paragraph, the support of $\nu_{-1}$ must be $\Real_{\yY}\times\Real$ or $\Real_{\yY}\times\mathbb{S}^1$.
\end{proof}

\section{Conditional graphical property for self-shrinkers in unit balls}
This section is devoted to the proof of Theorem \ref{GraphThm}. The key ingredient is a bootstrap machinery (cf. Proposition \ref{BootstrapProp}) in which we show an improvement on the oscillation of the unit normal to any self-shrinker with uniformly bounded total curvature and to which the position vector is almost tangent. In order to establish Proposition \ref{BootstrapProp}, we need to first prove a quantitative splitting result for the self-shrinker under consideration; cf. Lemmas \ref{GraphExtLem} and \ref{RadImproveLem}.

Denote by $d_\Gamma$ the geodesic distance in a surface $\Gamma$. Given $\vV\in\Real^3\setminus\set{\oO}$ and $a,b\in\Real$ with $a<b$, let $\mathring{\vV}=|\vV|^{-1}\vV$ and 
$$
I_\vV(a,b)=\set{\xX\in\Real_\vV \colon a<\xX\cdot\mathring{\vV}<b}.
$$

\begin{lem} \label{GraphExtLem}
There exist universal constants $0<\epsilon_0,\eta_0 \ll 1$ such that given $\epsilon\in (0,\epsilon_0)$ and $\zZ\in\Real^3$ with $\epsilon |\zZ|>4$, if $\Gamma$ is a self-shrinker in $B^3_r(\zZ)$ with $\zZ\in\Gamma$, where $r=(\epsilon^{2}|\zZ|)^{-1}$, and it satisfies that
\begin{align} 
\label{SplitEqn} \sup_{\xX\in\Gamma} |\nN_\Gamma(\xX)\cdot\mathring{\xX}| & <\epsilon_0,\mbox{ and} \\
\label{OscNormalEqn} \sup_{\substack{\xX,\xX^\prime\in\Gamma \\ d_\Gamma(\xX,\xX^\prime)<\epsilon r}} |\nN_\Gamma(\xX)- & \nN_\Gamma(\xX^\prime)|  <\epsilon_0,
\end{align}
then $\Gamma$ contains the graph of a function $u$ on the rectangle
$$
4 \eta_0 r \left\{I_{\zZ}(-1,1)\times I_{\wW}(-\epsilon,\epsilon)\right\}+\zZ
$$
with $u(\zZ)=0$ and $|Du|\leq 10^{-1}$, where $\wW=\zZ\times\nN_\Gamma(\zZ)$.
\end{lem}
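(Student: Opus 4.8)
The plan is to combine the hypotheses \eqref{SplitEqn}--\eqref{OscNormalEqn} to get, over a definite-sized region, a quantitative control that forces $\Gamma$ to be graphical with small gradient, and then to check that the self-shrinker equation together with the near-tangency of the position vector forces this graph to spread over a full rectangle of the stated aspect ratio. First I would fix the scaling: with $r=(\epsilon^2|\zZ|)^{-1}$ and $|\zZ|$ large (so $r$ is small compared with $|\zZ|$ but large compared with $1$), inside $B^3_r(\zZ)$ the position vector has length comparable to $|\zZ|$ and varies by at most $O(r)=o(|\zZ|)$, so the ``radial direction'' $\mathring{\xX}$ is essentially the constant vector $\mathring{\zZ}$ on all of $\Gamma\cap B^3_r(\zZ)$, up to an error $O(r/|\zZ|)=O(\epsilon^2)$. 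Hypothesis \eqref{SplitEqn} then says $\nN_\Gamma$ is nearly orthogonal to $\mathring{\zZ}$ everywhere on $\Gamma$, and \eqref{OscNormalEqn} says $\nN_\Gamma$ oscillates by less than $\epsilon_0$ on geodesic balls of radius $\epsilon r$; chaining these oscillation bounds along geodesics shows $\nN_\Gamma$ stays within, say, $C\epsilon_0$ of the fixed vector $\nN_\Gamma(\zZ)$ on the intrinsic ball $B_{d_\Gamma}(\zZ,\epsilon r)$, hence a fortiori $\nN_\Gamma(\zZ)\cdot\mathring{\zZ}$ is $O(\epsilon_0)$ and $T_\zZ\Gamma$ is close to the plane $P:=\mathring{\zZ}^\perp$. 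Writing $\wW=\zZ\times\nN_\Gamma(\zZ)$, the pair $\{\mathring{\zZ},\wW\}$ (after normalizing $\wW$) is an approximately orthonormal basis of $T_\zZ\Gamma$, and the rectangle $4\eta_0 r\{I_\zZ(-1,1)\times I_\wW(-\epsilon,\epsilon)\}+\zZ$ is exactly a box in this tangent plane of dimensions $\sim\eta_0 r$ in the $\mathring{\zZ}$-direction and $\sim\eta_0\epsilon r$ in the $\wW$-direction.

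Next I would set up the graphical parametrization. On the intrinsic ball $B_{d_\Gamma}(\zZ,\epsilon r)$ the nearly-constant normal means that orthogonal projection $\pi\colon\Gamma\to T_\zZ\Gamma$ is a local diffeomorphism with gradient of the resulting height function $u$ bounded by $C\epsilon_0$; the standard argument (e.g. as in Allard-type or Colding--Minicozzi graph lemmas) shows $\pi$ is actually injective on this piece and that the image contains a Euclidean ball in $T_\zZ\Gamma$ of radius a definite fraction of $\epsilon r$, say $\tfrac12\epsilon r$. Choosing $\eta_0$ a small universal constant, the short side $4\eta_0\epsilon r$ of the rectangle fits inside $\tfrac12 \epsilon r$, so the graph already covers the full $\wW$-extent. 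The issue is the long side: $I_\zZ(-1,1)$ scaled by $4\eta_0 r$ has length $\sim 8\eta_0 r$, which is much longer than $\epsilon r$, so one pass through \eqref{OscNormalEqn} is not enough. Here I would iterate: starting from $\zZ$, walk along the graph in the $\mathring{\zZ}$-direction in steps of length $\sim\epsilon r/2$; at each new center point $\zZ_k$ the normal is still within $C\epsilon_0$ of $\nN_\Gamma(\zZ)$ (the total number of steps is $O(\eta_0/\epsilon)$, and as long as $\eta_0$ is a fixed small constant this is a bounded — in fact small — accumulation once we note the oscillation estimate can be re-applied on each new geodesic ball without loss), so we extend the graph by another slab of width $\sim\epsilon r$. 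Repeating $O(\eta_0/\epsilon)$ times covers the whole rectangle while keeping $|Du|\le 10^{-1}$, provided $\epsilon_0$ is chosen universally small enough that $C\epsilon_0<10^{-1}$ after the bounded accumulation.

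Finally I would record why the extension does not ``run off'' $\Gamma$ or develop a large gradient before reaching the rectangle's boundary: this is where the self-shrinker equation \eqref{ShrinkerEqn} enters. Since $\Gamma\subset B^3_r(\zZ)$ is a self-shrinker through $\zZ$ with $|\zZ|\sim\epsilon^{-2}r^{-1}\cdot r = \epsilon^{-2}\gg 1$ large, the shrinker equation $H_\Gamma=-\tfrac12\xX\cdot\nN_\Gamma$ together with \eqref{SplitEqn} gives $|H_\Gamma|\le\tfrac12|\xX||\nN_\Gamma\cdot\mathring\xX|\le C|\zZ|\epsilon_0$, which at scale $r$ reads $r|H_\Gamma|\le C|\zZ|r\epsilon_0 = C\epsilon^{-2}\cdot\epsilon^{-2}\cdot\epsilon_0$... — rather than push the curvature bound through this awkward scaling, the cleaner route is: the bounded oscillation of $\nN_\Gamma$ on $\epsilon r$-geodesic-balls is itself a scale-invariant smallness that propagates, and injectivity of $\pi$ on the constructed piece is guaranteed because any failure would produce two sheets of $\Gamma$ with nearly parallel normals a distance $O(\epsilon_0 \epsilon r)$ apart, which the shrinker equation (a strong elliptic equation, hence a strong maximum principle / no interior tangency) together with \eqref{SplitEqn} excludes on a region of this size. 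I expect the main obstacle to be exactly this last point — rigorously controlling that the graph extends all the way across the long direction of the rectangle without the projection degenerating — since the hypotheses only give oscillation control on the small scale $\epsilon r$ while the target rectangle is much longer; the resolution is the iteration scheme above, and the bookkeeping that the error accumulated over $O(\eta_0/\epsilon)$ steps stays below the universal thresholds $\epsilon_0$, $10^{-1}$ by choosing $\eta_0$ small \emph{relative to} the constants coming from the single-step graph lemma.
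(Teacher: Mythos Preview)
Your iteration scheme has a genuine gap. You propose to walk along the graph in the $\mathring{\zZ}$-direction in steps of size $\sim\epsilon r/2$, applying \eqref{OscNormalEqn} at each new center, and you claim the accumulated drift of the normal over $O(\eta_0/\epsilon)$ steps stays below a universal threshold ``by choosing $\eta_0$ small.'' But $\eta_0$ and $\epsilon_0$ are \emph{universal} constants, while $\epsilon$ ranges over all of $(0,\epsilon_0)$; the number of steps $\eta_0/\epsilon$ is therefore unbounded, and the chained oscillation bound gives only $|\nN_\Gamma(\zZ_k)-\nN_\Gamma(\zZ)|\lesssim (\eta_0/\epsilon)\epsilon_0$, which blows up as $\epsilon\to 0$. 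Hypothesis \eqref{SplitEqn} does not rescue this: it pins only the $\mathring{\zZ}$-component of $\nN_\Gamma$, leaving the normal free to rotate in the plane spanned by $\mathring{\wW}$ and $\nN_\Gamma(\zZ)$, which is exactly the rotation that kills graphicality over $T_{\zZ}\Gamma$. You also explicitly set aside the self-shrinker equation for the extension step (``the cleaner route is\ldots''), but without it the statement is false at this level of generality --- nothing in \eqref{SplitEqn}--\eqref{OscNormalEqn} alone prevents $\Gamma$ from winding on scale $r$ while being nearly flat on scale $\epsilon r$.

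The paper's proof uses the self-shrinker structure in an essential and different way. It sets $\tau$ to be the maximal half-length (in units of $r$) of a graphical rectangle and argues by contradiction that $\tau$ is bounded below by a universal constant. The key point is that since $\Gamma$ is a self-shrinker, the graph $u$ generates a graphical mean curvature flow $v(\pP,t)=\sqrt{1-t}\,u(\pP/\sqrt{1-t})$; under this correspondence, moving \emph{radially} along $\Gamma$ is the same as moving \emph{forward in time} along the flow. The Ecker--Huisken interior gradient estimate for graphical MCF (together with a Colding--Minicozzi height bound) then propagates the gradient control from the initial $\epsilon r$-sized slab all the way to the radial endpoints of the current rectangle, with no loss depending on $\epsilon$. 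This yields good normal control at the endpoints $\zZ',\zZ''$, and re-applying \eqref{OscNormalEqn} once there extends the graph strictly, contradicting maximality of $\tau$. The parabolic estimate is what replaces your iteration and is precisely the missing idea.
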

\begin{proof}
Let $\zeta>1$ be a universal constant to be determined later in the proof. Define $\tau$ to be the largest number so that there is a function $u$ on 
$$
\Omega=r \left\{I_{\zZ}(-\tau^2,\tau^2)\times I_{\wW}(-\epsilon/2,\epsilon/2)\right\}+\zZ
$$
which satisfies that $\zZ\in {\rm graph}(u)\subset\Gamma$ and $|Du|\leq 10^{-1}$.

We first show that $\tau$ is well-defined. Let $\NN$ be the unit normal vector to $\Real_\zZ\times\Real_\wW$ so that $\nN_\Gamma(\zZ)\cdot\NN\geq 0$. By \eqref{SplitEqn}, 
\begin{equation} \label{CenterSplitEqn}
\nN_\Gamma(\zZ)\cdot\NN=\left(1-|\nN_\Gamma(\zZ)\cdot\mathring{\zZ}|^2\right)^{1/2}>\left(1-\epsilon_0^2\right)^{1/2}.
\end{equation}
This together with \eqref{OscNormalEqn} implies that for $\xX\in\Gamma$ with $d_\Gamma(\xX,\zZ)<\epsilon r$,
$$
\nN_\Gamma(\xX)\cdot\NN=\nN_\Gamma(\zZ)\cdot\NN-\left(\nN_\Gamma(\zZ)-\nN_\Gamma(\xX)\right)\cdot\NN>\left(1-\epsilon_0^2\right)^{1/2}-\epsilon_0.
$$
Thus, if $\epsilon_0<10^{-8}$, then 
$$
\left(1-\epsilon_0^2\right)^{1/2}-\epsilon_0\geq \left(1+10^{-2}\right)^{-1/2},
$$
and $\Gamma$ contains the graph of a function $\bar{u}$ on 
$$
\frac{\epsilon r}{2}\left\{I_{\zZ}(-1,1)\times I_{\wW}(-1,1)\right\}+\zZ
$$
with $\bar{u}(\zZ)=0$ and $|D\bar{u}|\leq 10^{-1}$. The gradient estimate is deduced from that
$$
1+|D\bar{u}|^2=(\nN_{{\rm graph}(\bar{u})}\cdot\NN)^{-2}\leq 1+10^{-2}.
$$
Hence it follows that $\tau$ exists and, moreover, $\tau^2\geq\epsilon/2$.

Next we argue by contradiction to bound $\tau$ from below by a fixed positive number. Through the arguments, assume that 
\begin{equation} \label{AssumpEqn}
\epsilon_0<\min\set{10^{-8}, (4\zeta^2)^{-1}} \mbox{ and } \tau<(4\zeta)^{-1}. 
\end{equation}
We will estimate the oscillation of $\nN_\Gamma$ along the curve $a\mapsto {\rm graph}(u(a\mathring{\zZ}))$. To do this, we need to invoke the fact that self-shrinkers generate mean curvature flows by scaling. By the assumptions on $\epsilon_0$ and $\tau$ we can define
$$
v(\pP,t)=\sqrt{1-t}\, u\left(\frac{\pP}{\sqrt{1-t}}\right)
$$
on the space-time domain
$$
\left\{\zeta \tau \epsilon r\left(I_{\zZ}(-1,1)\times I_{\wW}(-1,1)\right)+\qQ\right\}\times [0,T],
$$
where
$$
\qQ=\left(|\zZ|-\tau^2 r+\zeta \tau \epsilon r\right)\mathring{\zZ} \mbox{ and } T=1-\left(\frac{|\zZ|-\tau^2 r+2\zeta \tau \epsilon r}{|\zZ|+\tau^2 r}\right)^2.
$$
Moreover, by \eqref{AssumpEqn} and $(\epsilon r)^{-1}=\epsilon |\zZ|>4$, we have that
\begin{equation} 
\label{TimeUpperEqn} T=1-\left(1+\frac{2\zeta \tau \epsilon r-2 \tau^2 r}{|\zZ|+\tau^2 r}\right)^2 <\frac{4 \tau^2 r}{|\zZ|+\tau^2 r}<(2 \tau \epsilon r)^2<2^{-6}.
\end{equation}
Since ${\rm graph}(u)\subset\Gamma$ satisfies the self-shrinker equation \eqref{ShrinkerEqn}, the ${\rm graph}(v(\cdot,t))$ evolves by mean curvature.

Let $\zZ^\prime=\qQ+u(\qQ)\NN$ and $\Gamma_t={\rm graph}(v(\cdot,t))$ for $t\in [0,T]$. Next we prove a local gradient estimate for $\Gamma_t$ with respect to the plane $T_{\zZ^\prime}\Gamma$. Denote by $\mathcal{S}$ the space-time track of the flow $\set{\Gamma_t}_{t\in [0,T]}$; that is,
$$
\mathcal{S}=\set{(\xX,t)\in\Real^{n+1}\times [0,T]\colon \xX\in\Gamma_t}.
$$
Define a function $f$ on $\mathcal{S}$ by
$$
f(\xX,t)=\left(\nN_{\Gamma_t}(\xX)\cdot\nN_\Gamma(\zZ^\prime)\right)^{-1}.
$$
We show that $f$ is well-defined. As $|Du(\qQ)|\leq 10^{-1}$, 
\begin{equation} \label{NormalDiffEqn}
|\nN_\Gamma(\zZ^\prime)-\NN|^2=2-\frac{2}{\left(1+|Du(\qQ)|^2\right)^{1/2}}<\frac{1}{4}.
\end{equation}
Observe that
\begin{equation} \label{GradEqn}
|Dv(\pP,t)|=\left\vert Du\left(\frac{\pP}{\sqrt{1-t}}\right)\right\vert\leq 10^{-1},
\end{equation}
where $Dv$ is the spatial gradient of $v$. 
Thus, by \eqref{NormalDiffEqn} and \eqref{GradEqn}, for any $\xX\in\Gamma_t$, 
$$
\nN_{\Gamma_t}(\xX)\cdot\nN_\Gamma(\zZ^\prime)=\nN_{\Gamma_t}(\xX)\cdot\NN+\nN_{\Gamma_t}(\xX)\cdot\left(\nN_\Gamma(\zZ^\prime)-\NN\right)>\frac{10}{\sqrt{101}}-\frac{1}{2}>0,
$$
proving the claim.

Let $\gamma$ be the curve in $\mathcal{S}$ defined by 
$$
\gamma(t)=\left(\qQ+v(\qQ,t)\NN,t\right)\mbox{ for $t\in [0,T]$}.
$$
Next we bound $f|_\gamma$ from above using the gradient estimate of Ecker-Huisken. Namely, it follows from \cite[Theorem 2.1]{EHInterior} that
\begin{equation} \label{EHGradEqn}
\sup_{(\xX,t)\in\gamma} \left((\zeta \tau \epsilon r)^2-|\xX-\qQ|^2-4t\right) f(\xX,t) \leq (\zeta \tau \epsilon r)^2 \sup_{\xX\in\Gamma_0} f(\xX,0).
\end{equation}
To continue, we will need a $L^\infty$ estimate for $v(\qQ,\cdot)$. Assume $\zeta>6$ from now on. Then it follows from \cite[Lemma 3]{CMGrad}, \eqref{TimeUpperEqn}, and \eqref{GradEqn} that
\begin{equation} \label{C0OscEqn}
\sup_{t\in [0,T]} |v(\qQ,t)-v(\qQ,0)|<4 \tau \epsilon r.
\end{equation}
Define $s$ to be
$$
s=1-|\qQ|^2|\zZ|^{-2},
$$
so $\zZ=(1-s)^{-1/2}\qQ$. Invoking that $\epsilon<\epsilon_0<(4\zeta^2)^{-1}$ (cf. \eqref{AssumpEqn}) and $\epsilon |\zZ|>4$, a straightforward calculation gives $0<s<T$. Thus
\begin{equation} \label{ZeroEqn}
v(\qQ,s)=\sqrt{1-s}\, u(\zZ)=0.
\end{equation}
Hence it follows from the triangle inequality, \eqref{C0OscEqn}, and \eqref{ZeroEqn} that
\begin{equation} \label{HeightEqn}
\sup_{t\in [0,T]} |v(\qQ,t)|\leq 2\sup_{t\in [0,T]} |v(\qQ,t)-v(\qQ,0)|<8 \tau \epsilon r.
\end{equation}
Moreover, by \eqref{GradEqn} and $\tau<(4\zeta)^{-1}$ (cf. \eqref{AssumpEqn}), $d_\Gamma(\xX,\zZ^\prime)<\epsilon r$ for $\xX\in\Gamma_0\subset\Gamma$. Thus the hypothesis \eqref{OscNormalEqn} gives that 
\begin{equation} \label{T0GradEqn}
\inf_{\xX\in\Gamma_0}\nN_{\Gamma_0}(\xX)\cdot\nN_\Gamma(\zZ^\prime)>1-2^{-1}\epsilon_0^2.
\end{equation}
Hence, substituting \eqref{TimeUpperEqn}, \eqref{HeightEqn}, and \eqref{T0GradEqn} into \eqref{EHGradEqn} gives that
$$
\sup_{(\xX,t)\in\gamma} f(\xX,t)<\left(1-10^2\zeta^{-2}\right)^{-1}\left(1-2^{-1}\epsilon_0^2\right)^{-1}.
$$
Choosing $\zeta>10^{10}$, as $\epsilon_0<10^{-8}$ (cf. \eqref{AssumpEqn}), 
\begin{equation} \label{GGradEqn}
\sup_{(\xX,t)\in\gamma} f(\xX,t) < \left(1-10^{-4}\right)^{-1}.
\end{equation}

Therefore, 
\begin{align*}
&\inf_{(\xX,t)\in\gamma} \nN_{\Gamma_t}(\xX)\cdot\NN\geq \nN_\Gamma(\zZ)\cdot\nN_\Gamma(\zZ^\prime)-|\nN_\Gamma(\zZ)-\NN|-\sup_{(\xX,t)\in\gamma} |\nN_{\Gamma_t}(\xX)-\nN_\Gamma(\zZ^\prime)| \\
=&f(\sqrt{1-s}\, \zZ,s)^{-1}-\left(2-2\nN_\Gamma(\zZ)\cdot\NN\right)^{1/2}-\sup_{(\xX,t)\in\gamma} \left(2-2f(\xX,t)^{-1}\right)^{1/2} 
>1-10^{-3},
\end{align*}
where we used \eqref{CenterSplitEqn} and $\eqref{GGradEqn}$ with the fact that $(\sqrt{1-s}\, \zZ,s)\in\gamma$ in the last inequality. In particular, this implies that
\begin{equation} \label{EndGradEqn}
\nN_\Gamma(\zZ^\prime)\cdot\NN>1-10^{-3} \mbox{ and } \nN_\Gamma(\zZ^{\prime\prime})\cdot\NN>1-10^{-3},
\end{equation}
where 
$$
\zZ^{\prime\prime}=\frac{\qQ}{\sqrt{1-T}}+u\left(\frac{\qQ}{\sqrt{1-T}}\right)\NN\in {\rm graph}(u).
$$
By the choice of $\zeta$, \eqref{AssumpEqn}, and \eqref{HeightEqn}, both $B^3_{\epsilon r}(\zZ^\prime)$ and $B^3_{\epsilon r}(\zZ^{\prime\prime})$ are contained in $B^3_{r}(\zZ)$. Also, by \eqref{OscNormalEqn} and \eqref{EndGradEqn}, for any $\xX\in\Sigma$ with $d_\Gamma(\xX,\zZ^\prime)<\epsilon r$ or $d_\Gamma(\xX,\zZ^{\prime\prime})<\epsilon r$, 
$$
\nN_\Gamma(\xX)\cdot\NN\geq 1-2\cdot 10^{-3}.
$$ 
Thus $\Gamma$ contains the graphs of functions $u^\prime,u^{\prime\prime}$ defined on, respectively,
\begin{align*}
\Omega^\prime & =\frac{\epsilon r}{2}\left\{I_{\zZ}(-1,1)\times I_{\wW}(-1,1)\right\}+\qQ,  \\
\Omega^{\prime\prime} & =\frac{\epsilon r}{2}\left\{I_{\zZ}(-1,1)\times I_{\wW}(-1,1)\right\}+\frac{\qQ}{\sqrt{1-T}},
\end{align*}
with the properties that $\zZ^\prime\in {\rm graph}(u^\prime), \zZ^{\prime\prime}\in {\rm graph}(u^{\prime\prime})$, and $|Du^\prime|, |Du^{\prime\prime}|\leq 10^{-1}$. 

Finally, observe that by \eqref{AssumpEqn} and the hypothesis that $(\epsilon r)^{-1}=\epsilon |\zZ|>4$,
\begin{align*}
|\qQ|-\frac{\epsilon r}{2} & =|\zZ|-\tau^2 r-\frac{\epsilon r}{2}(1-2\zeta \tau)<|\zZ|-\tau^2 r-\frac{\epsilon r}{8}, \mbox{ and}\\
\frac{|\qQ|}{\sqrt{1-T}}+\frac{\epsilon r}{2} & =|\zZ|+\tau^2 r+\frac{\epsilon r}{2}\left(1-\frac{2\zeta \tau (|\zZ|+\tau^2 r)}{|\zZ|-\tau^2 r+2\zeta \tau \epsilon r}\right)>|\zZ|+\tau^2 r+\frac{\epsilon r}{8}.
\end{align*}
As $\Gamma$ is embedded, $u=u^\prime$ on $\Omega\cap\Omega^\prime$ and $u=u^{\prime\prime}$ on $\Omega\cap\Omega^{\prime\prime}$. Hence $\Gamma$ contains the graph of an extension of $u$ defined on a strictly larger rectangle 
$$
 r \left\{I_{\zZ}(-\tau^2-2^{-3}\epsilon,\tau^2+2^{-3}\epsilon)\times I_{\wW}(-2^{-1}\epsilon,2^{-1}\epsilon)\right\}+\zZ
$$
with its gradient bounded by $10^{-1}$. This contradicts the maximality of $\tau$. 

Therefore the lemma follows with $\epsilon_0=\min\set{10^{-9}, (8\zeta^2)^{-1}}$ and $\eta_0=(8\zeta)^{-2}$, where $\zeta$ is any fixed constant larger than $10^{10}$.
\end{proof}

\begin{lem} \label{RadImproveLem}
Let $u$ be the function given in Lemma \ref{GraphExtLem}. Then
$$
\epsilon^{-1} |\partial_\zZ u|+\epsilon r |\partial_\wW^2u|+r |\partial_\zZ^2u|+r |\partial_\zZ\partial_\wW u|\leq C_0
$$
on the domain
$$
\eta_0 r\left\{I_\zZ(-1,1)\times I_\wW(-\epsilon,\epsilon)\right\}+\zZ
$$
for some universal constant $C_0>0$. 
Here $\partial_\zZ$ and $\partial_\wW$ are to take the directional derivatives along $\mathring{\zZ}$ and $\mathring{\wW}$, respectively.
\end{lem}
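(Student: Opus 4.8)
The plan is to write the self-shrinker equation \eqref{ShrinkerEqn} for $\mathrm{graph}(u)\subset\Gamma$ in graphical form over the plane $P=\Real_\zZ\times\Real_\wW$ and to exploit its structure. Write points of $P$ as $\pP=(|\zZ|+a)\mathring\zZ+b\mathring\wW$, so that $\partial_\zZ=\partial_a$, $\partial_\wW=\partial_b$, let $\NN$ be the unit normal to $P$ with $\NN\cdot\nN_\Gamma(\zZ)\geq 0$, and identify $u$ with a function of $(a,b)$ on $R_1:=4\eta_0 r\{I_\zZ(-1,1)\times I_\wW(-\epsilon,\epsilon)\}+\zZ$, its domain from Lemma \ref{GraphExtLem}, where $u(\zZ)=0$ and $|Du|\leq 10^{-1}$. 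Since $\pP\perp\NN$, a short computation gives $\langle\xX,\nN_\Gamma\rangle=(u-(|\zZ|+a)\partial_\zZ u-b\,\partial_\wW u)/\sqrt{1+|Du|^2}$ along the graph, so \eqref{ShrinkerEqn} is equivalent to the uniformly elliptic quasilinear equation
\begin{equation*}
\left(\delta^{ij}-\frac{\partial_i u\,\partial_j u}{1+|Du|^2}\right)\partial_{ij}u=\frac{(|\zZ|+a)\partial_\zZ u+b\,\partial_\wW u-u}{2}.
\end{equation*}
Solving for $\partial_\zZ u$ this reads $\partial_\zZ u=(|\zZ|+a)^{-1}\big(2\mathcal{A}[D^2u]-b\,\partial_\wW u+u\big)$, where $\mathcal{A}[D^2u]=(\delta^{ij}-\tfrac{\partial_iu\partial_ju}{1+|Du|^2})\partial_{ij}u$; thus $\partial_\zZ u$ is controlled by $|\zZ|^{-1}$ times quantities that are themselves one order lower, and together with $u(\zZ)=0$ and $b=0$ at $\zZ$ this is precisely what will produce the extra factor $\epsilon$ in the $\mathring\zZ$-direction.

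First I would bound $D^2u$ and $D^3u$ by interior estimates for mean curvature flow. As in the proof of Lemma \ref{GraphExtLem}, the graphs of $v(\pP,t):=\sqrt{1-t}\,u(\pP/\sqrt{1-t})$ move by mean curvature, with $|Dv(\pP,t)|=|Du(\pP/\sqrt{1-t})|\leq 10^{-1}$ wherever defined. Fix $\pP_1$ in $R_2:=2\eta_0 r\{I_\zZ(-1,1)\times I_\wW(-\epsilon,\epsilon)\}+\zZ$ and set $\rho=\eta_0\epsilon r/10$. Since $\sqrt{1-t}$ with $|t|\leq\rho^2$ displaces the rectangle only by $O(\rho^2|\zZ|)=\tfrac{\eta_0^2}{100}r\ll\eta_0 r$ (using $(\epsilon r)^2|\zZ|=r$), the flow $v$ is defined with gradient $\leq 10^{-1}$ on the parabolic cylinder $B^2_\rho(\pP_1)\times(-\rho^2,0]$, and Ecker--Huisken's interior estimates \cite{EHInterior} give $|D^2u|\leq C_1(\epsilon r)^{-1}$ and $|D^3u|\leq C_1(\epsilon r)^{-2}$ on $R_2$, for a universal $C_1$. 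The scale available here is $\sim\eta_0\epsilon r$ rather than $\sim r$ — the self-similar rescaling limits the usable parabolic neighborhood to the $\mathring\wW$-width of the rectangle — so these estimates alone are \emph{not} anisotropic; that improvement comes from the next step.

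Next, restricting the identity for $\partial_\zZ u$ to the axis $\{b=0\}$, where it is $\partial_\zZ u=(|\zZ|+a)^{-1}(2\mathcal{A}[D^2u]+u)$, and using $|u|\leq|a|\,\sup_{\mathrm{axis}}|\partial_\zZ u|\leq\eta_0 r\,\sup_{\mathrm{axis}}|\partial_\zZ u|$ on $R_0:=\eta_0 r\{I_\zZ(-1,1)\times I_\wW(-\epsilon,\epsilon)\}+\zZ$ (integrating from $\zZ$, where $u=0$), I obtain
\begin{equation*}
\sup_{\mathrm{axis}\cap R_0}|\partial_\zZ u|\leq\frac{C}{|\zZ|}\Big(C_1(\epsilon r)^{-1}+\eta_0 r\sup_{\mathrm{axis}\cap R_0}|\partial_\zZ u|\Big).
\end{equation*}
Because $\eta_0 r/|\zZ|=\eta_0(\epsilon|\zZ|)^{-2}<1/16$, the last term is absorbed, and $(\epsilon r)^{-1}/|\zZ|=\epsilon$ then gives $\sup_{\mathrm{axis}\cap R_0}|\partial_\zZ u|\lesssim\epsilon$ and hence $\sup_{\mathrm{axis}\cap R_0}|u|\lesssim\epsilon r$; propagating across $\mathring\wW$ with $|\partial_\wW u|\leq 10^{-1}$ and $|b|<\eta_0\epsilon r$ yields $|\partial_\zZ u|\lesssim\epsilon$ and $|u|\lesssim\epsilon r$ on all of $R_0$. (Alternatively $|u|\lesssim\epsilon r$ follows from \cite[Lemma 3]{CMGrad} along the flow, exactly as in \eqref{HeightEqn}.) Finally, differentiating $\partial_\zZ u=(|\zZ|+a)^{-1}(2\mathcal{A}[D^2u]-b\,\partial_\wW u+u)$ in $a$ and in $b$ and substituting $|u|\lesssim\epsilon r$, $|\partial_\zZ u|\lesssim\epsilon$, $|D^2u|\lesssim(\epsilon r)^{-1}$, $|D^3u|\lesssim(\epsilon r)^{-2}$, $|b|<\eta_0\epsilon r$, with the numerology $(\epsilon r)^{-1}=\epsilon|\zZ|$ and $(\epsilon r)^{-2}|\zZ|^{-1}=\epsilon^2|\zZ|=r^{-1}$, every term is $\lesssim r^{-1}$, so $|\partial_\zZ^2u|\lesssim r^{-1}$ and $|\partial_\zZ\partial_\wW u|\lesssim r^{-1}$ on $R_0$, while $|\partial_\wW^2u|\leq|D^2u|\lesssim(\epsilon r)^{-1}$. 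Combining the four bounds gives the lemma with $C_0$ universal. The main obstacle is arranging the interior estimates at the correct scale $\sim\eta_0\epsilon r$ (the flow is only available on parabolic cylinders of that radius) and recognizing that the anisotropic gain of a factor $\epsilon$ in the $\mathring\zZ$-direction is forced by the $|\zZ|^{-1}$ prefactor of $\partial_\zZ u$ in the rearranged self-shrinker equation, pinned down by $u(\zZ)=0$.
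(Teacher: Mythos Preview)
Your proof is correct, and the ingredients are the same as the paper's: the rescaled mean curvature flow $v(\pP,t)=\sqrt{1-t}\,u(\pP/\sqrt{1-t})$ together with the Ecker--Huisken interior estimates \cite[Theorem~3.4]{EHInterior} at parabolic scale $\sim\epsilon r$. The organization, however, is different. The paper runs the flow over a time interval $[T',T]$ of length comparable to $(\epsilon r)^2$, applies Ecker--Huisken once to bound $|v|$, $|\partial_t v|$, $|D^2v|$, $|\partial_t Dv|$, and then reads off the anisotropic bounds on $u$ directly from the chain rule identities \eqref{RadGradEqn}--\eqref{TSGradEqn}: since $\partial_t v \sim |\zZ|\,\partial_\zZ u + (\text{lower order})$ and $\partial_t\partial_i v \sim |\zZ|\,\partial_\zZ\partial_i u + (\text{lower order})$, the parabolic estimates translate immediately into the radial ones. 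You instead extract only the isotropic bounds $|D^2u|\lesssim(\epsilon r)^{-1}$, $|D^3u|\lesssim(\epsilon r)^{-2}$ from Ecker--Huisken, and then recover the anisotropic gain by writing the self-shrinker equation itself in the solved form $\partial_\zZ u=(|\zZ|+a)^{-1}(2\mathcal{A}[D^2u]-b\,\partial_\wW u+u)$ and bootstrapping (then differentiating for the second-order radial bounds). The two routes are equivalent in content --- the mean curvature flow equation for $v$ combined with the chain rule for $\partial_t v$ \emph{is} the self-shrinker equation for $u$ --- but your presentation has the merit of isolating the source of the anisotropy (the $|\zZ|^{-1}$ prefactor) as a purely elliptic fact. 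One minor expository point: when you ``propagate across $\mathring\wW$'' to get $|\partial_\zZ u|\lesssim\epsilon$ off the axis, the integration $\partial_\zZ u(a,b)=\partial_\zZ u(a,0)+\int_0^b\partial_\wW\partial_\zZ u$ with the isotropic $|D^2u|$ bound only gives an $O(1)$ error; what actually works (and is presumably what you intend) is to first propagate $|u|\lesssim\epsilon r$ to all of $R_0$ and then reinsert into the equation for $\partial_\zZ u$, where the $b\,\partial_\wW u$ and $u$ terms are now both $\lesssim\epsilon r$.
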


\begin{proof}
Without loss of generality we assume that $\zZ=(z,0,0)$ (where $z=|\zZ|$) and $\wW=(0,|\wW|,0)$, so $\Real_\zZ\times\Real_\wW$ is the $x_1x_2$-plane.
As $0<\eta_0 \ll 1$ and $(\epsilon r)^{-1}=\epsilon z>4$, we can define
$$
v\colon (z-4\eta_0 r, z-4\eta_0 r+4\eta_0 \epsilon r)\times(-2\eta_0\epsilon r,2\eta_0\epsilon r)\times [0,T]\to\Real
$$
by
$$
v(x_1,x_2,t)=\sqrt{1-t}\, u\left(\frac{x_1}{\sqrt{1-t}},\frac{x_2}{\sqrt{1-t}}\right),
$$
where 
\begin{equation} \label{TimeUppEqn}
0<T=1-\left(\frac{z-4\eta_0 r+2\eta_0 \epsilon r}{z+\eta_0 r}\right)^2<10\eta_0\epsilon^2r^2<\frac{5}{8}.
\end{equation}
As ${\rm graph}(u)$ satisfies the self-shrinker equation \eqref{ShrinkerEqn}, the ${\rm graph}(v(\cdot,t))$ flows by mean curvature. 

Let
$$
T^\prime=1-\left(\frac{z-4\eta_0 r+2\eta_0 \epsilon r}{z-\eta_0 r}\right)^2.
$$
By our hypotheses 
\begin{equation} \label{TimeLowEqn}
T^\prime=1-\left(1-\frac{(3-2\epsilon)\eta_0 r}{z-\eta_0 r}\right)^2=\frac{(3-2\epsilon)\eta_0 r}{z-\eta_0 r}\left(2-\frac{(3-2\epsilon)\eta_0 r}{z-\eta_0 r}\right)\geq\eta_0 \epsilon^2 r^2.
\end{equation}
By the chain rule of taking derivatives
\begin{equation} \label{SpatialGradEqn}
|Dv(x_1,x_2,t)|=\left\vert Du \left(\frac{x_1}{\sqrt{1-t}},\frac{x_2}{\sqrt{1-t}}\right)\right\vert\leq 10^{-1}
\end{equation}
where $Dv$ is the spatial gradient. Observe that
\begin{equation} \label{VanishEqn}
v(z-4\eta_0 r+2\eta_0\epsilon r,0,t_0)=\sqrt{1-t_0}\, u(z,0,0)=0 
\end{equation}
for some $t_0\in (T^\prime,T)$.
Hence, invoking \eqref{TimeUppEqn}, \eqref{TimeLowEqn}, \eqref{SpatialGradEqn}, and \eqref{VanishEqn}, it follows from the mean curvature flow equation and \cite[Theorem 3.4]{EHInterior} that 
\begin{equation}\label{HighDerEqn}
(\epsilon r)^{-1}|v|+\epsilon r|\partial_t v|+\epsilon r\sum_{i,j=1}^2 |\partial_i\partial_j v|+(\epsilon r)^2 \sum_{i=1}^2 |\partial_t\partial_i v|\leq C
\end{equation}
on the space-time domain
$$
(z-4\eta_0 r+\eta_0 \epsilon r, z-4\eta_0 r+3\eta_0 \epsilon r)\times (-\eta_0\epsilon r,\eta_0\epsilon r)\times [T^\prime,T]
$$
for some constant $C=C(\eta_0)>0$.
Here $\partial_iv$ is the partial derivative of $v$ with respect to $x_i$.

Again, by the chain rule of taking derivatives
\begin{equation} \label{RadGradEqn}
\begin{split}
\partial_t v(x_1,x_2,t) & =\frac{-v(x_1,x_2,t)}{2\left(1-t\right)}+\frac{x_1}{2\left(1-t\right)}\, \partial_1 u\left(\frac{x_1}{\sqrt{1-t}},\frac{x_2}{\sqrt{1-t}}\right)\\
& \quad +\frac{x_2}{2\left(1-t\right)}\, \partial_2 u\left(\frac{x_1}{\sqrt{1-t}},\frac{x_2}{\sqrt{1-t}}\right).
\end{split}
\end{equation}
Furthermore,
\begin{align}
\label{HessEqn} \partial_i\partial_j v(x_1,x_2,t) & =\frac{1}{\sqrt{1-t}}\, \partial_i\partial_j u\left(\frac{x_1}{\sqrt{1-t}},\frac{x_2}{\sqrt{1-t}}\right), \mbox{ and}\\
\begin{split} \label{TSGradEqn}
\partial_t\partial_i v(x_1,x_2,t) & =\frac{x_1}{2\left(1-t\right)^{3/2}}\, \partial_1\partial_i u\left(\frac{x_1}{\sqrt{1-t}},\frac{x_2}{\sqrt{1-t}}\right) \\
&\quad +\frac{x_2}{2\left(1-t\right)^{3/2}}\, \partial_2\partial_i u\left(\frac{x_1}{\sqrt{1-t}},\frac{x_2}{\sqrt{1-t}}\right).
\end{split}
\end{align}
Observe that given $(x_1,x_2)\in (z-\eta_0 r,z+\eta_0 r)\times (-\eta_0\epsilon r,\eta_0\epsilon r)$, there exists a $t\in (T^\prime,T)$ such that 
$$
\left(\sqrt{1-t}\, x_1,\sqrt{1-t}\, x_2\right)\in (z-4\eta_0 r+\eta_0 \epsilon r, z-4\eta_0 r+3\eta_0 \epsilon r)\times (-\eta_0\epsilon r,\eta_0 \epsilon r).
$$
Therefore the lemma follows from substituting \eqref{RadGradEqn}, \eqref{HessEqn}, and \eqref{TSGradEqn} into \eqref{HighDerEqn} and rearranging the inequality.
\end{proof}

Using Lemmas \ref{GraphExtLem} and \ref{RadImproveLem} we now establish the bootstrap machinery mentioned in the beginning of the section.
\begin{prop} \label{BootstrapProp}
There is a universal constant $\epsilon_1\in (0,1)$ such that given $\kappa>0$ and $\beta\in (0,1)$ there is an $\eta_1=\eta_1(\beta,\kappa)\in (0,1)$ such that for any $\tilde{\epsilon}\in (0,\epsilon_1)$ and $\xX_1\in\Real^3$ with $\tilde{\epsilon}^2|\xX_1|>1$, if $\tilde{\Gamma}$ is a self-shrinker in $B^3_{\sigma}(\xX_1)$ with $\xX_1\in\tilde{\Gamma}$, where $\sigma=(\tilde{\epsilon}^{2}|\xX_1|)^{-1}$, and it satisfies that
\begin{align}
\label{BSplitEqn} \sup_{\xX\in\tilde{\Gamma}} |\nN_{\tilde{\Gamma}}(\xX)\cdot\mathring{\xX}| & <\epsilon_1, \\
\label{BOscNormalEqn} \sup_{\substack{\xX,\xX^\prime\in\tilde{\Gamma} \\ d_{\tilde{\Gamma}}(\xX,\xX^\prime)<\tilde{\epsilon}\sigma}} |\nN_{\tilde{\Gamma}}(\xX)-\nN_{\tilde{\Gamma}} & (\xX^\prime)| <\epsilon_1, \mbox{ and} \\
\label{BTotalCurvEqn} \int_{\tilde{\Gamma}} |A_{\tilde{\Gamma}}|^2\, d\mathcal{H}^2 & <\kappa,
\end{align}
then it follows that
$$
\sup_{\substack{\xX\in\tilde{\Gamma} \\ d_{\tilde{\Gamma}}(\xX,\xX_1)<\eta_1\sigma}} |\nN_{\tilde{\Gamma}}(\xX)-\nN_{\tilde{\Gamma}}(\xX_1)|<\beta\epsilon_1.
$$
\end{prop}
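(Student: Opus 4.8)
The plan is to combine the quantitative splitting of Lemmas~\ref{GraphExtLem}--\ref{RadImproveLem}, applied along a finite chain of base points clustering near $\xX_1$, with the total curvature bound \eqref{BTotalCurvEqn}. First I would fix, once and for all, $\epsilon_1\le\epsilon_0$ and $\epsilon_1<\tfrac14$; then for any $\zZ\in\tilde\Gamma$ with $|\zZ|$ comparable to $|\xX_1|$ and any $r\le\sigma$, the self-shrinker $\tilde\Gamma\cap B^3_r(\zZ)$ satisfies the hypotheses \eqref{SplitEqn}--\eqref{OscNormalEqn} of Lemma~\ref{GraphExtLem} with $\epsilon=(r|\zZ|)^{-1/2}$: indeed $\epsilon r=(r/|\zZ|)^{1/2}$ is at most comparable to $\tilde\epsilon\sigma=(\sigma/|\xX_1|)^{1/2}$, so \eqref{BOscNormalEqn} bounds the oscillation over geodesic $\epsilon r$--balls by $\epsilon_1\le\epsilon_0$, while $\tilde\epsilon|\xX_1|=\tilde\epsilon^2|\xX_1|/\tilde\epsilon>1/\epsilon_1>4$. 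Applying Lemma~\ref{GraphExtLem} at $\xX_1$ (with $\epsilon=\tilde\epsilon$, $r=\sigma$) and then inductively at points $\xX^{(1)},\xX^{(2)},\dots$ chosen on the $\wW$--boundary of the graph already produced, each with a parameter $\epsilon^{(k)}$ slightly larger than $\tilde\epsilon$ so that $B^3_{r^{(k)}}(\xX^{(k)})\subset B^3_\sigma(\xX_1)$ and $r^{(k)}$ is comparable to $\sigma$, and using that $\tilde\Gamma$ is embedded so consecutive graphs agree on overlaps, after $N\sim\eta_1/(\eta_0\tilde\epsilon)$ steps in each direction I obtain a connected embedded $\tilde\Gamma'\subset\tilde\Gamma$ that is the graph of a function $u$, with $|Du|\le 10^{-1}$ and $u$ vanishing at $\xX_1$, over (a small perturbation of) a rectangle in the plane through $\xX_1$ spanned by $\mathring{\xX_1}$ and $\wW=\xX_1\times\nN_{\tilde\Gamma}(\xX_1)$, of radial extent $\sim\eta_0\sigma$ and angular ($\wW$) extent at least $2\eta_1\sigma$. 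Since $\eta_1<\eta_0$ and the induced metric is $C^1$--close to the flat one, the geodesic ball $B_{\tilde\Gamma}(\xX_1,\eta_1\sigma)$ is contained in $\tilde\Gamma'$.

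On each chart Lemma~\ref{RadImproveLem} gives $|\partial_\zZ u|\lesssim\tilde\epsilon$, $|\partial_\zZ^2u|+|\partial_\zZ\partial_\wW u|\lesssim\sigma^{-1}$, and $|\partial_\wW^2u|\lesssim(\tilde\epsilon\sigma)^{-1}$, where $\partial_\zZ,\partial_\wW$ are the radial and angular partials. For $\xX=\mathrm{graph}(u)(t,w)\in B_{\tilde\Gamma}(\xX_1,\eta_1\sigma)$, so $|t-|\xX_1||,|w|\lesssim\eta_1\sigma$, I would write $|\nN_{\tilde\Gamma}(\xX)-\nN_{\tilde\Gamma}(\xX_1)|\lesssim|Du(t,w)-Du(|\xX_1|,w)|+|Du(|\xX_1|,w)-Du(|\xX_1|,0)|$. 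By the radial bounds the first term and the $\partial_\zZ\partial_\wW u$--part of the second are each $\lesssim\eta_1\sigma\cdot\sigma^{-1}=\eta_1$, hence $<\beta\epsilon_1/4$ as soon as $\eta_1<\beta\epsilon_1/C$. What remains is $\int_0^{|w|}|\partial_\wW^2u(|\xX_1|,w')|\,dw'\le(\eta_1\sigma)^{1/2}\bigl(\int_{|w'|<\eta_1\sigma}|\partial_\wW^2u(|\xX_1|,w')|^2\,dw'\bigr)^{1/2}$, so the whole estimate reduces to controlling the $L^2$--norm of $\partial_\wW^2u$ along the single angular slice through $\xX_1$.

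This is where the main work lies. Since $|\partial_\wW^2u|\le|D^2u|\lesssim|A_{\tilde\Gamma'}|$, the bound \eqref{BTotalCurvEqn} together with $\tilde\Gamma'\subset\tilde\Gamma$ gives $\int_{\tilde\Gamma'}|\partial_\wW^2u|^2\lesssim\kappa$. The key claim is that this two--dimensional bound, which runs over a radial thickness $\sim\eta_0\sigma$, controls the single angular slice up to the factor $\eta_0\sigma$, i.e.\ $\int_{|w|<\eta_1\sigma}|\partial_\wW^2u(|\xX_1|,w)|^2\,dw\lesssim\kappa/(\eta_0\sigma)$. This requires upgrading the splitting to a quantitative $C^2$ statement: that $\tilde\Gamma'$ is $C^2$--close to a product $\Real_{\xX_1}\times(\text{profile curve})$ over the full radial thickness, so that $|\partial_\wW^2u|$ varies slowly in the radial variable and hence the value on the slice $\{t=|\xX_1|\}$ does not exceed a typical value by much. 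One would prove this by bootstrapping the smallness of $\partial_\zZ u$ and $\partial_\zZ\partial_\wW u$ from Lemma~\ref{RadImproveLem} against interior elliptic estimates for the linearization of the self-shrinker equation on $\tilde\Gamma'$, carried out at the angular scale $\sim\eta_1\sigma$ that the patching makes available (rather than the single--chart scale $\tilde\epsilon\sigma$), with all constants independent of $\tilde\epsilon$. Granting the claim, the leftover term is $\lesssim(\eta_1\kappa/\eta_0)^{1/2}$, so choosing $\eta_1=\eta_1(\beta,\kappa)$ of the order of $\min\{\eta_0,\ \beta\epsilon_1/C,\ \eta_0\beta^2\epsilon_1^2/(C\kappa)\}$ forces the total oscillation below $\beta\epsilon_1$.

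The main obstacle is exactly this slice bound. A naive argument — picking a good radial level for the slice — only yields $\int_{|w|<\eta_1\sigma}|\partial_\wW^2u(|\xX_1|,\cdot)|^2\lesssim\kappa/(\eta_1\sigma)$, hence an oscillation of size $\sqrt\kappa$ that is not small, or else it forces a radial correction of the fixed size $\eta_0$; neither can be pushed below an arbitrary target $\beta\epsilon_1$. Beating this is precisely where the hypotheses \eqref{BSplitEqn}, \eqref{BOscNormalEqn} and the self-shrinker equation must enter in an essential way, through the quantitative $C^2$ splitting. A secondary technical point, which I would handle by an open--closed (continuity) argument on the achieved angular extent, is that the chain of charts actually reaches angular extent $2\eta_1\sigma$ with the planes through the $\xX^{(k)}$ staying mutually close; this is closed up by the oscillation estimate itself, which delivers a rotation $<\beta\epsilon_1<\epsilon_1$.
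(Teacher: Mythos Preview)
Your chaining of Lemmas~\ref{GraphExtLem}--\ref{RadImproveLem} is on the right track, and you correctly isolate the obstruction: the two--dimensional bound \eqref{BTotalCurvEqn} does not directly control $\int_{|w|<\eta_1\sigma}|\partial_\wW^2 u(|\xX_1|,w)|^2\,dw$ on the particular angular slice through $\xX_1$. Your proposed cure---a quantitative $C^2$ splitting showing that $|\partial_\wW^2 u|$ varies slowly in the radial variable---would need control on $\partial_\zZ\partial_\wW^2 u$, a third derivative, whereas Lemma~\ref{RadImproveLem} stops at second order. Interior elliptic estimates cannot help here, because before the proposition is established the only scale at which $\tilde\Gamma$ is known to be graphical with controlled gradient in the $\wW$--direction is $\tilde\epsilon\sigma$; elliptic estimates at that scale return $|\partial_\wW^2 u|\lesssim(\tilde\epsilon\sigma)^{-1}$, the same bound you already have, and nothing about its radial variation at the larger scale $\eta_0\sigma$. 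So the slice bound $\kappa/(\eta_0\sigma)$ you need is not available, and the ``naive'' bound $\kappa/(\eta_1\sigma)$ is the honest one, leaving oscillation $\sim\sqrt\kappa$.

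The paper sidesteps the slice problem by running the argument in the opposite direction. Instead of fixing $\eta_1$ and bounding the oscillation, set $\zeta=\eta\beta\epsilon_1\sigma$ (for a universal $\eta$ coming from Lemmas~\ref{GraphExtLem}--\ref{RadImproveLem}) and let $\tau$ be the \emph{largest} number for which $\tilde\Gamma$ contains the graph of some $\tilde u$ over $I_{\xX_1}(-\zeta,\zeta)\times I_{\wW_1}(-\zeta\tau,\zeta\tau)+\xX_1$ with $|\nN_{\tilde\Gamma}-\nN_{\tilde\Gamma}(\xX_1)|<\beta\epsilon_1$ throughout. The ``local claim'' you would prove from Lemmas~\ref{GraphExtLem}--\ref{RadImproveLem}---that near any $\zZ\in\tilde\Gamma\cap B^3_{\sigma/2}(\xX_1)$ with $|\nN_{\tilde\Gamma}(\zZ)-\nN_{\tilde\Gamma}(\xX_1)|<\beta\epsilon_1$ the normal varies by at most $\beta\epsilon_1/8$ on a rectangle of radial extent $2\zeta$ and angular extent $2\zeta\tilde\epsilon$---applied at $\zZ=\Psi(0,\pm\zeta\tilde\tau)$ with $\tilde\tau=\tau-\tilde\epsilon/2$, combines with the maximality of $\tau$ to force, for at least one sign,
\[
\inf_{a\in(-\zeta,\zeta)}\bigl|\nN_{\tilde\Gamma}(\Psi(a,\zeta\tilde\tau))-\nN_{\tilde\Gamma}(\Psi(a,0))\bigr|>\tfrac{\beta\epsilon_1}{4}.
\]
The point is that the radial control from the local claim makes the $\beta\epsilon_1$--rotation occur \emph{uniformly along every radial line}, so integrating in $a$ and applying the fundamental theorem in $w$ gives a lower bound on the full two--dimensional $\int_{{\rm graph}(\tilde u)}|A_{\tilde\Gamma}|\,d\mathcal{H}^2$, never on a slice. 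Cauchy--Schwarz against the area $\lesssim\zeta^2\tau$ and \eqref{BTotalCurvEqn} then yield $\tfrac12\zeta\beta\epsilon_1\le 4\zeta(\tau\kappa)^{1/2}$, hence $\tau\ge(\beta\epsilon_1)^2/(2^6\kappa)$, and $\eta_1\sim\zeta\tau/\sigma\sim\eta(\beta\epsilon_1)^3/\kappa$. What you listed as the ``secondary technical point''---a continuity argument on the achieved angular extent---is in fact the backbone of the proof; the slice estimate is never needed.
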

\begin{proof}
First we show that if $\epsilon_1$ is sufficiently small, then given $\zZ\in\tilde{\Gamma}\cap B^3_{\frac{\sigma}{2}}(\xX_1)$ with $|\nN_{\tilde{\Gamma}}(\zZ)-\nN_{\tilde{\Gamma}}(\xX_1)|<\beta\epsilon_1$ there is a function $\bar{u}$ on 
$$
\eta\beta\epsilon_1\sigma\left\{I_{\xX_1}(-1,1)\times I_{\wW_1}(-\tilde{\epsilon},\tilde{\epsilon})\right\}+\zZ
$$
satisfying that $\zZ\in {\rm graph}(\bar{u})\subset\tilde{\Gamma}$ and 
$$
\sup_{\xX\in {\rm graph}(\bar{u})} |\nN_{\tilde{\Gamma}}(\xX)-\nN_{\tilde{\Gamma}}(\zZ)|<\frac{\beta\epsilon_1}{8},
$$
where $\eta=\eta(\eta_0,C_0)\in (0,1)$ and $\wW_1=\xX_1\times\nN_{\tilde{\Gamma}}(\xX_1)$.

To see this we first observe that $\frac{1}{2}|\xX_1|<|\zZ|<2|\xX_1|$ as $\tilde{\epsilon}, \sigma<1$.  In order to apply Lemmas \ref{GraphExtLem}, \ref{RadImproveLem} with $\epsilon=2\tilde{\epsilon}$, $r=(4\tilde{\epsilon}^2|\zZ|)^{-1}$, and $\Gamma=\tilde{\Gamma}\cap B^3_r(\zZ)$, we assume that
\begin{equation} \label{EpsilonAssEqn}
\epsilon_1<\min\set{1/4, \epsilon_0/2}.
\end{equation}
Thus there is an $\tilde{\eta}=\tilde{\eta}(\eta_0,C_0)\in (0,1)$ and a function $u$ on 
$$
2\tilde{\eta}\beta\epsilon_1\sigma\left\{I_{\zZ}(-1,1)\times I_\wW(-\tilde{\epsilon},\tilde{\epsilon})\right\}+\zZ,
$$
where $\wW=\zZ\times\nN_{\tilde{\Gamma}}(\zZ)$, such that $\zZ\in {\rm graph}(u)\subset\tilde{\Gamma}$, $|Du|\leq 10^{-1}$,
\begin{equation} \label{HeightOscNorEqn}  
\Vert u\Vert_{L^\infty}<\beta\epsilon_1\tilde{\epsilon}\sigma, \mbox{ and} 
\sup_{\xX\in {\rm graph}(u)} |\nN_{\tilde{\Gamma}}(\xX)-\nN_{\tilde{\Gamma}}(\zZ)|<\frac{\beta\epsilon_1}{8}.
\end{equation}

Next we rewrite ${\rm graph}(u)$ as the graph of a function over some subset of the plane $\Pi_1=\zZ+\Real_{\xX_1}\times\Real_{\wW_1}$. Let $\NN_1$ be the unit normal to $\Pi_1$ so that $\nN_{\tilde{\Gamma}}(\xX_1)\cdot\NN_1\geq 0$, and let $\NN$ be the unit normal to $\Real_{\zZ}\times\Real_{\wW}$ so that $\nN_{\tilde{\Gamma}}(\zZ)\cdot\NN\geq 0$. Observe that by \eqref{BSplitEqn} and the triangle inequality
\begin{equation} \label{BNormalDiffEqn}
\begin{split}
|\NN-\NN_1| & \leq |\NN-\nN_{\tilde{\Gamma}}(\zZ)|+|\nN_{\tilde{\Gamma}}(\zZ)-\nN_{\tilde{\Gamma}}(\xX_1)|+|\NN_1-\nN_{\tilde{\Gamma}}(\xX_1)| \\
& <2\left(2-2\left(1-\epsilon_1^2\right)^{1/2}\right)^{1/2}+\beta\epsilon_1.
\end{split}
\end{equation}
This together with \eqref{BSplitEqn} and \eqref{HeightOscNorEqn} further implies that for all $\xX\in {\rm graph}(u)$,
\begin{equation} \label{BGradEqn}
\begin{split}
\nN_{\tilde{\Gamma}}(\xX)\cdot\NN_1 & \geq\nN_{\tilde{\Gamma}}(\zZ)\cdot\NN-|\NN-\NN_1|-|\nN_{\tilde{\Gamma}}(\xX)-\nN_{\tilde{\Gamma}}(\zZ)| \\
& >\left(1-\epsilon_1^2\right)^{1/2}-2\left(2-2\left(1-\epsilon_1^2\right)^{1/2}\right)^{1/2}-2\beta\epsilon_1.
\end{split}
\end{equation}

Let $\gamma$ be the curve in ${\rm graph}(u)$ given by
$$
\gamma(s)=\zZ+s\mathring{\zZ}+u(\zZ+s\mathring{\zZ})\NN \mbox{ for $|s|<\tilde{\eta}\beta\epsilon_1\sigma$}.
$$
We show that the orthogonal projection of $\gamma$ to $\Pi_1$ stays close to the $\xX_1$-axis. As $\zZ\in B^3_{\frac{\sigma}{2}}(\xX_1)$ and $(\tilde{\epsilon}^2|\xX_1|)^{-1}=\sigma<1$, it follows from the triangle inequality that
$$
|\mathring{\zZ}-\mathring{\xX}_1|<\tilde{\epsilon}^2 \sigma^2<\tilde{\epsilon}^2.
$$
This together with \eqref{HeightOscNorEqn} and \eqref{BNormalDiffEqn} implies that 
\begin{equation} \label{HorizonEqn}
|(\gamma(s)-\zZ)\cdot\mathring{\wW}_1|\leq |s||\mathring{\zZ}-\mathring{\xX}_1|+ \Vert u\Vert_{L^\infty} |\NN-\NN_1|<C\beta\epsilon_1^2\tilde{\epsilon}\sigma
\end{equation}
for some universal constant $C>1$. Similarly, we have that
\begin{equation} \label{RadialEqn}
|(\gamma(s)-\zZ)\cdot\mathring{\xX}_1-s|<C\beta\epsilon_1^2\tilde{\epsilon} \sigma.
\end{equation}

Hence, in addition to \eqref{EpsilonAssEqn}, assuming $\epsilon_1<\tilde{\eta}(10^{4}C)^{-1}$, it follows from \eqref{BGradEqn}, \eqref{HorizonEqn}, and \eqref{RadialEqn} that $\mathcal{N}$, the tubular neighborhood of $\gamma$ in $\tilde{\Gamma}$ with radius $\tilde{\eta}\beta\epsilon_1\tilde{\epsilon}\sigma$,  contains the graph of a function defined on 
$$
\frac{1}{2}\tilde{\eta}\beta\epsilon_1 \sigma\left\{I_{\xX_1}(-1,1)\times I_{\wW_1}(-\tilde{\epsilon},\tilde{\epsilon})\right\}+\zZ.
$$
Invoking the fact that $\mathcal{N}\subset {\rm graph}(u)$, the claim follows with $\eta=\tilde{\eta}/2$.

Next let $\zeta=\eta\beta\epsilon_1\sigma$. Define $\tau$ to be the largest number so that there is a function $\tilde{u}$ defined on 
$$
I_{\xX_1}(-\zeta,\zeta)\times I_{\wW_1}(-\zeta\tau,\zeta\tau)+\xX_1
$$
which satisfies that $\xX_1\in {\rm graph}(\tilde{u})\subset\tilde{\Gamma}$ and 
$$
\sup_{\xX\in {\rm graph}(\tilde{u})} |\nN_{\tilde{\Gamma}}(\xX)-\nN_{\tilde{\Gamma}}(\xX_1)|<\beta\epsilon_1.
$$
It follows from the preceding claim that $\tau$ exists and $\tau\geq\tilde{\epsilon}$. Moreover, by \eqref{BSplitEqn}, 
$$
\nN_{\tilde{\Gamma}}(\xX)\cdot\NN_1=\nN_{\tilde{\Gamma}}(\xX_1)\cdot\NN_1+\left(\nN_{\tilde{\Gamma}}(\xX)-\nN_{\tilde{\Gamma}}(\xX_1)\right)\cdot\NN_1
>\left(1-\epsilon_1^2\right)^{1/2}-\beta\epsilon_1
$$
for all $\xX\in {\rm graph}(\tilde{u})$. This together with our hypothesis on $\epsilon_1$ implies that
\begin{equation} \label{BC1Eqn}
|D\tilde{u}|^2=\left(\nN_{{\rm graph}(\tilde{u})}\cdot\NN_1\right)^{-2}-1<1.
\end{equation}

Finally we bound $\tau$ from below in terms of $\epsilon_1$, $\beta$, and $\kappa$. First let us parametrize ${\rm graph}(\tilde{u})$ as follows. Define
$$
\Psi\colon (-\zeta,\zeta)\times (-\zeta\tau,\zeta\tau)\to\Real^3
$$
by
$$
\Psi(a,b)=\xX_1+a\mathring{\xX}_1+b\mathring{\wW}_1+\tilde{u}(\xX_1+a\mathring{\xX}_1+b\mathring{\wW}_1)\NN_1.
$$
If $\tau<1/4$, then it follows from \eqref{BC1Eqn} that $|\tilde{u}(\Psi(0,\pm\zeta\tilde{\tau}))|<\sigma/4$, where $\tilde{\tau}=\tau-2^{-1}\tilde{\epsilon}$, and thus $\Psi(0,\pm\zeta\tilde{\tau})\in\tilde{\Gamma}\cap B^3_{\frac{\sigma}{2}}(\xX_1)$. Invoking the preceding claim (with $\zZ=\Psi(0,\pm\zeta\tilde{\tau})$) and the maximality of $\tau$ we get that 
\begin{align*}
\inf_{a\in (-\zeta,\zeta)} \left\vert\nN_{\tilde{\Gamma}}\left(\Psi(a,\zeta\tilde{\tau})\right)-\nN_{\tilde{\Gamma}}\left(\Psi(a,0)\right)\right\vert & >\frac{\beta\epsilon_1}{4},\mbox{ or} \\
\inf_{a\in (-\zeta,\zeta)} \left\vert\nN_{\tilde{\Gamma}}\left(\Psi(a,-\zeta\tilde{\tau})\right)-\nN_{\tilde{\Gamma}}\left(\Psi(a,0)\right)\right\vert & >\frac{\beta\epsilon_1}{4}.
\end{align*}
Without loss of generality we assume that the former inequality holds. Hence it follows from the Cauchy-Schwarz inequality, \eqref{BTotalCurvEqn}, and \eqref{BC1Eqn} that
\begin{align*}
\frac{1}{2}\zeta\beta\epsilon_1 & <\int_{-\zeta}^\zeta\left\vert\nN_{\tilde{\Gamma}}\left(\Psi(a,\zeta\tilde{\tau})\right)-\nN_{\tilde{\Gamma}}\left(\Psi(a,0)\right)\right\vert da \leq\sqrt{2} \int_{{\rm graph}(\tilde{u})} |A_{\tilde{\Gamma}}|\, d\mathcal{H}^2 \\
& \leq\sqrt{2}\left(\int_{{\rm graph}(\tilde{u})} |A_{\tilde{\Gamma}}|^2 \, d\mathcal{H}^2\right)^{1/2} \left(\int_{{\rm graph}(\tilde{u})} 1 \, d\mathcal{H}^2\right)^{1/2} \leq 4\zeta (\tau\kappa)^{1/2},
\end{align*}
giving that $\tau\geq (\beta\epsilon_1)^2(2^{6}\kappa)^{-1}$.

Therefore the proposition follows with 
$$
\epsilon_1=\min\set{2^{-3}\epsilon_0, \eta(10^{5}C)^{-1}} \mbox{ and } \eta_1=\min\set{2^{-2}\eta\beta\epsilon_1, \eta(\beta\epsilon_1)^3(2^{6}\kappa)^{-1}},
$$
where $C>1$ is some universal constant and $\eta=\eta(\eta_0,C_0)\in (0,1)$. 
\end{proof}

Finally we repeatedly apply Proposition \ref{BootstrapProp} to conclude Theorem \ref{GraphThm}.
\begin{proof}[Proof of Theorem \ref{GraphThm}]
First we choose $\beta\in (0,1)$ in Proposition \ref{BootstrapProp} as follows:
$$
\beta=\min\set{2^{-1},\epsilon_1^{-1} \left(2-2\left(1+\delta^2\right)^{-1/2}\right)^{1/2}}.
$$
Let  $\eta_1=\eta_1(\beta,\kappa)\in (0,1)$ be the constant given in Proposition \ref{BootstrapProp}, and define
$$
\tilde{\epsilon}_0=\min\set{2^{-4}\eta_1, 2^{-2}\epsilon_1}.
$$
We choose 
$$
\alpha=2^{-1}\tilde{\epsilon}_0\epsilon_1\mbox{ and } \mathcal{R}=2(\tilde{\epsilon}_0)^{-2},
$$
so, by \eqref{SlcurvEqn}, the self-shrinker $\Sigma$ in the theorem satisfies that
\begin{align}
\label{GSplitEqn} &  \sup_{\xX\in\Sigma} |\nN_\Sigma(\xX)\cdot\mathring{\xX}|\leq 2\sqrt{2} \sup_{\xX\in\Sigma}|\xX|^{-1} |A_\Sigma(\xX)|<\epsilon_1, \mbox{ and } \\
\label{GOscNormalEqn} & \sup_{\substack{\xX,\xX^\prime\in\Sigma \\ d_\Sigma(\xX,\xX^\prime)<\tilde{\epsilon}_0r_0}} |\nN_\Sigma(\xX)-\nN_\Sigma(\xX^\prime)|< \tilde{\epsilon}_0 r_0 \sup_{\xX\in\Sigma} |A_\Sigma(\xX)|<\epsilon_1,
\end{align}
where $r_0=(\tilde{\epsilon}^{2}_0 |\xX_0|)^{-1}<1/2$ and we used that $|\xX|<2|\xX_0|$ if $\xX\in B^3_1(\xX_0)$.

Let $\tilde{\epsilon}_i=2^{-i}\tilde{\epsilon}_0$ and $r_i=4^{i}r_0$ for $i\in\mathbb{N}$. Define $\rho_0=1$ and, inductively,
$$
\rho_{i+1}=\rho_{i}-2^{-1}r_i.
$$
We show that for $i\in\mathbb{N}$, if $r_i<1/2$, then
$$
\sup_{\substack{\xX,\xX^\prime\in\Sigma\cap B^3_{\rho_i}(\xX_0) \\ d_\Sigma(\xX,\xX^\prime)<\tilde{\epsilon}_ir_i}} |\nN_\Sigma(\xX)-\nN_\Sigma(\xX^\prime)|<\epsilon_1.
$$
For $i=0$ this is true in view of \eqref{GOscNormalEqn}. Inductively, assume that the claim holds for $i$. Also, we assume $r_{i+1}<1/2$; otherwise, it is vacuously true. As $|\xX_0|>\mathcal{R}>2$, observe that $2^{-1}|\xX_0|<|\xX|<2|\xX_0|$ if $\xX\in B_1^3(\xX_0)$. This together with our choice of $\tilde{\epsilon}_0$ and the triangle inequality implies that for any $\xX\in B^3_{\rho_{i+1}}(\xX_0)$, 
$$
2\tilde{\epsilon}_i<\epsilon_1, \; (2\tilde{\epsilon}_i |\xX|)^{-1}<\tilde{\epsilon}_i r_i, \mbox{ and } B^3_{\frac{1}{4\tilde{\epsilon}_i^2|\xX|}}(\xX)\subset B^3_{\rho_i}(\xX_0).
$$
Thus, invoking \eqref{TotalcurvEqn}, \eqref{GSplitEqn}, and the inductive assumption, we apply Proposition \ref{BootstrapProp} with $\xX_1=\xX$, $\tilde{\epsilon}=2\tilde{\epsilon}_i$, $\sigma=(4\tilde{\epsilon}^2_i |\xX|)^{-1}$, and $\tilde{\Gamma}=\Sigma\cap B^3_{\sigma}(\xX)$, where $\xX$ is any point in $\Sigma\cap B^3_{\rho_{i+1}}(\xX_0)$. It follows that 
\begin{equation} \label{IOscNormalEqn}
\sup_{\substack{\xX^\prime\in\Sigma\cap B^3_\sigma(\xX) \\ d_\Sigma (\xX^\prime,\xX)<\eta_1\sigma}} |\nN_\Sigma(\xX)-\nN_\Sigma(\xX^\prime)|<\beta\epsilon_1.
\end{equation}
Note that by our choices of $\tilde{\epsilon}_0$ and $\mathcal{R}$,
$$
\eta_1\sigma>\eta_1 (8\tilde{\epsilon}^2_i|\xX_0|)^{-1}\geq \tilde{\epsilon}_{i+1}r_{i+1},
$$
Hence the claim for $i+1$ follows immediately from \eqref{IOscNormalEqn} and the arbitrariness of the point $\xX$ in $\Sigma\cap B^3_{\rho_{i+1}}(\xX_0)$.

Finally let 
$$
i_0=\max\set{i\in\mathbb{N}\colon r_i<1/2}.
$$
Clearly $i_0$ exists, and by the maximality of $i_0$ we have that $2^{-3}\leq r_{i_0}<2^{-1}$. Observe that by the definitions of $r_i$ and $\rho_i$,
$$
\rho_{i_0}=1-\frac{r_{i_0}}{2}\sum_{i=0}^{i_0} 4^{-i}>1-r_{i_0}>\frac{1}{2}.
$$
Again, in view of \eqref{TotalcurvEqn}, \eqref{GSplitEqn}, and the preceding claim for $i_0$, we apply Proposition \ref{BootstrapProp} with $\xX_1=\xX_0$, $\tilde{\epsilon}=\tilde{\epsilon}_{i_0}$, $\sigma=r_{i_0}$, and $\tilde{\Gamma}=\Sigma\cap B^3_{r_{i_0}}(\xX_0)$ to conclude that
$$
\sup_{\substack{\xX\in\Sigma \\ d_\Sigma(\xX,\xX_0)<\eta_1 r_{i_0}}} |\nN_\Sigma(\xX)-\nN_\Sigma(\xX_0)|<\beta\epsilon_1.
$$
Hence $\Sigma$ contains the graph of a function $\bar{u}$ on  $B^2_{\frac{\eta_1}{16}}(\xX_0)\subset T_{\xX_0}\Sigma$ with $\bar{u}(\xX_0)=0$ and $|D\bar{u}|\leq \delta$. Here the gradient estimate follows from our choice of $\beta $ and the elementary fact that
$$
1+|D\bar{u}|^2=\left(\nN_{{\rm graph}(\bar{u})}\cdot\nN_\Sigma(\xX_0)\right)^{-2}.
$$

Therefore the theorem follows with 
$$
\alpha=\min\set{2^{-5}\eta_1\epsilon_1,2^{-3}\epsilon_1^2}, \; \mathcal{R}=\min\set{2^{9}\eta_1^{-2}, 2^5\epsilon_1^{-2}}, \mbox{ and } \rho=2^{-4}\eta_1,
$$
where $\eta_1=\eta_1(\beta,\kappa)$ is given in Proposition \ref{BootstrapProp} with $\beta$ defined in terms of $\delta$ at the beginning of the proof.
\end{proof}

\section{Sheeting theorem}
The goal of this section is to prove the sheeting theorem, i.e., Theorem \ref{SheetThm}. First we apply Theorem \ref{GraphThm} to prove a sequential version of Theorem \ref{SheetThm}.
\begin{prop} \label{SeqSheetProp}
Let $M$ be an end of a noncompact self-shrinker in $\Real^3$ of finite genus, and let $\mathcal{K}$ be the associated Brakke flow for $M$. Given $\yY\in\Real^3\setminus\set{\oO}$ with $\Theta_{(\yY,0)}(\mathcal{K})\geq 1$, there exist
\begin{itemize}
\item a positive integer $L$, a sufficiently large integer $i_0$,
\item two increasing unbounded sequences $R_i$ and $\tau_i$ with $\tau_{i+1}\tau_i^{-1}\to 1$, and,
\item a sequence $\Sigma^{\tau_i}$ of self-shrinkers in $\Real^3$ with the property that either the $\Sigma^{\tau_i}$ are all $\Real_\yY\times\mathbb{S}^1$, or they are all of the form $\Real_\yY\times\Real$,
\end{itemize} 
depending only on $M$ and $\yY$, such that for each $i>i_0$,
\begin{enumerate}
\item \label{SeqComponent} $(M-\tau_i\yY)\cap B^3_{2R_i}$ decomposes into $L$ connected components, $M_1^{\tau_i},\ldots,M_L^{\tau_i}$;
\item \label{SeqSheet} each $M_j^{\tau_i}$ can be written as the normal exponential graph of a function $f_j^{\tau_i}$ over some subset $\Omega_j^{\tau_i}$ of $\Sigma^{\tau_i}$.
\end{enumerate}
Furthermore, for each $j$,
\begin{equation} \label{SeqC1Eqn}
\lim_{i\to +\infty} \sup_{\Omega_j^{\tau_i}} |f_j^{\tau_i}|+|\nabla_{\Sigma^{\tau_i}} f_j^{\tau_i}|=0.
\end{equation}
\end{prop}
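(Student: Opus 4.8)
The plan is to read off the sheeting along a suitable sequence from the tangent flow analysis of Propositions~\ref{WhiteProp} and~\ref{SplitProp} together with the conditional graphical property of Theorem~\ref{GraphThm}, and then to arrange the ratio condition $\tau_{i+1}\tau_i^{-1}\to1$ by a soft contradiction and scheduling argument. \emph{Step 1: fixing $L$ and the model.} Since $\Theta_{(\yY,0)}(\mathcal{K})\ge1$, Proposition~\ref{WhiteProp}\eqref{TangentFlow} produces tangent flows $\mathcal{T}=\set{\nu_t}_{t\in\Real}$ at $(\yY,0)$, and by Proposition~\ref{SplitProp} the support of $\nu_{-1}$ is either $\Real_\yY\times\mathbb{S}^1$ or a plane $\Real_\yY\times\Real$. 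Each such support is connected and smooth, so $\nu_{-1}=m\,\mathcal{H}^2\lfloor\mathrm{supp}\,\nu_{-1}$ for an integer $m\ge1$, whence $\Theta_{(\oO,0)}(\mathcal{T})=\Theta_{(\yY,0)}(\mathcal{K})$ equals $m$ in the plane case and $m\,\Theta_{\mathrm{cyl}}$ in the cylinder case, with $\Theta_{\mathrm{cyl}}=\sqrt{2\pi/\mathrm{e}}$ irrational. Hence $\Theta_{(\yY,0)}(\mathcal{K})$ determines both the dichotomy and the integer $m=:L$, so $L$ and the type of model depend only on $M$ and $\yY$.

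\emph{Step 2: subsequential smooth convergence.} I would next show that along every sequence $\tau_k\to+\infty$ some subsequence has $M-\tau_{k_j}\yY\to L\cdot(\text{model})$ in $C^\infty_{loc}(\Real^3)$, the model being $\Real_\yY\times\mathbb{S}^1$ or a plane $\Real_\yY\times\Real$ that may depend on the subsequence. By Proposition~\ref{WhiteProp}\eqref{TangentFlow} a subsequence of $\mathcal{K}^{(\yY,0),\tau_k}$ converges to a tangent flow, so $\mathcal{H}^2\lfloor(M-\tau_{k_j}\yY)\to\nu_{-1}=L\,\mathcal{H}^2\lfloor(\text{model})$ as Radon measures. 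To upgrade this I would invoke Theorem~\ref{GraphThm}: for $\xX_0\in(M-\tau_{k_j}\yY)\cap B^3_R$ the point $\xX_0+\tau_{k_j}\yY$ lies on $M$ with $|\xX_0+\tau_{k_j}\yY|\ge\tau_{k_j}|\yY|-R\to+\infty$, and by the growth bound on $|A_M|$ of \cite[Theorem 19]{Song} together with the local Gauss--Bonnet estimate \cite[Theorem 3]{IlmanenSing} (and the area--ratio bound \cite{CZ}) the hypotheses \eqref{SlcurvEqn} and \eqref{TotalcurvEqn} are met on $M\cap B^3_1(\xX_0+\tau_{k_j}\yY)$ for $j$ large; thus the component of $(M-\tau_{k_j}\yY)\cap B^3_\rho(\xX_0)$ through $\xX_0$ is a graph of gradient $<\delta$ over its tangent plane. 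This uniform local graphicality, the uniform bound on $\int|A|^2$ over unit balls, and elliptic regularity for the self-shrinker equation promote the measure convergence to $C^\infty_{loc}$ convergence of $L$ sheets onto the model; the number of sheets equals $L$ by the measure count (no mass is lost, by Huisken's local monotonicity formula \cite[Section 10]{WhStrata}), the sheets are pairwise disjoint because $M-\tau_{k_j}\yY$ is embedded and $C^1$-close to the embedded model (cf. also \cite[Proposition 6]{Song} and Lemma~\ref{MaxPrincipleLem}), and each sheet is a connected graph over a subset of the connected model.

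\emph{Step 3: from subsequences to a sequence with $\tau_{i+1}\tau_i^{-1}\to1$.} Step 2 yields, for every $R>0$ and $\epsilon>0$, a threshold $\tau^*(R,\epsilon)$ such that for all $\tau\ge\tau^*(R,\epsilon)$ the set $(M-\tau\yY)\cap B^3_R$ is a disjoint union of $L$ connected normal graphs, over subsets of a model of the prescribed type, with $C^1$-norm $<\epsilon$; for otherwise some $\tau_k\to+\infty$ would violate this for fixed $R,\epsilon$, contradicting Step 2. I would then schedule: set $\tau_i=i$ (so $\tau_i\nearrow+\infty$ and $\tau_{i+1}\tau_i^{-1}\to1$), and
\[
R_i=\max\set{R\in\mathbb{N}\colon R\le\sqrt{i}\ \text{and}\ \tau^*(2k,1/k)\le i\ \text{for all}\ k\le R},\qquad \epsilon_i=1/R_i.
\]
Then $\tau_i\ge\tau^*(2R_i,\epsilon_i)$, $R_i\nearrow+\infty$, and $R_i=o(\tau_i)$ (so $B^3_{2R_i}(\tau_i\yY)$ avoids $\partial M$). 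Items~\eqref{SeqComponent} and~\eqref{SeqSheet} and the estimate~\eqref{SeqC1Eqn} follow at once, with $\Sigma^{\tau_i}$ the model at parameter $\tau_i$ --- the fixed cylinder $\Real_\yY\times\mathbb{S}^1$ for all $i$ in the cylinder case, and a (possibly $i$-dependent) plane $\Real_\yY\times\Real$ in the plane case.

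The crux is Step 2: upgrading the Brakke/measure convergence to smooth convergence with exactly $L$ sheets. This is where Theorem~\ref{GraphThm} is indispensable, and it forces one to verify the curvature hypotheses \eqref{SlcurvEqn}--\eqref{TotalcurvEqn} uniformly along the rescaling (hence the role of \cite[Theorem 19]{Song} and \cite[Theorem 3]{IlmanenSing}) and to exclude hidden multiplicity or loss of mass via the local monotonicity formula. By contrast, the density bookkeeping of Step 1 and the scheduling of Step 3 are comparatively routine.
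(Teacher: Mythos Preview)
There is a genuine gap in Step~2, at the point where you invoke Theorem~\ref{GraphThm}. Hypothesis~\eqref{SlcurvEqn} requires $\sup_{\xX\in\Sigma}|\xX|^{-1}|A_\Sigma(\xX)|<\alpha$ for the \emph{specific small} constant $\alpha=\alpha(\kappa,\delta)$ produced by that theorem. Song's linear growth bound \cite[Theorem~19]{Song} only yields $|A_M(\xX)|\le C_1(1+|\xX|)$, hence $|\xX|^{-1}|A_M(\xX)|\le C_1(1+|\xX|^{-1})$, which is bounded by roughly $C_1$ but has no reason to be below $\alpha$. So the appeal to Song does not, by itself, deliver~\eqref{SlcurvEqn}.

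The paper closes this gap with the sublinear curvature estimate~\eqref{SublinearCurvEqn}, proved by a blow-up argument: if $|\xX_{i_k}|^{-1}|A_M(\xX_{i_k})|\ge\zeta>0$, rescale by $|\xX_{i_k}|$ to obtain $N_{i_k}$, use Song's bound only for \emph{compactness}, and show the limit is flat because $\int_{N_{i_k}\cap B^3_1}|\yY^\perp|^2\to0$ (equation~\eqref{L2SplitEqn}). This last fact is equivalent, after unwinding the rescaling, to $\tau_i^{\,2}\int_{(M-\tau_i\yY)\cap B^3_l}|\yY^\perp|^2\to0$, i.e.\ to~\eqref{AsympSplitEqn}, and it is precisely here that the \emph{particular} sequence $\tau_i=\mathrm{e}^{\bar s_i/2}$ extracted from the local monotonicity formula is essential. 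Along an arbitrary subsequence with tangent-flow (varifold) convergence you only get $\int_{(M-\tau_{k_j}\yY)\cap B^3_l}|\yY^\perp|^2\to0$, with no $\tau^2$ prefactor; this is too weak by exactly the scale factor needed in the blow-up. The same prefactor is what makes the local Gauss--Bonnet application go through uniformly (the translated mean curvature contains a term $\tfrac12\tau_i\yY^\perp$), so your verification of~\eqref{TotalcurvEqn} along a generic subsequence is likewise not justified. In short, the monotonicity-driven construction of $\tau_i$ is not a cosmetic device for arranging $\tau_{i+1}\tau_i^{-1}\to1$; it is the mechanism that produces the quantitative decay~\eqref{AsympSplitEqn} without which neither hypothesis of Theorem~\ref{GraphThm} can be checked. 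Your Step~1 density argument for fixing $L$ and the model type is correct (and is essentially the content of \cite{BWHM}), and your Step~3 scheduling would be fine \emph{if} Step~2 held for every large $\tau$; but as written Step~2 does not go through.
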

\begin{proof}
Recall the local monotonicity formula from \cite[Section 10]{WhStrata}:
\begin{equation} \label{MonotonEqn}
\begin{split}
& -\frac{d}{dt}(-4\pi t)^{-1} \int_{\sqrt{-t}M}\phi_\yY^2\, \mathrm{e}^{\frac{|\zZ-\yY|^2}{4t}} d\mathcal{H}^2+C \\
\geq & (-4\pi t)^{-1}\int_{\sqrt{-t}M}\left\vert\mathbf{H}_{\sqrt{-t}M}-\frac{(\zZ-\yY)^\perp}{2t}-\frac{2(D\phi_\yY)^\perp}{\phi_\yY}\right\vert^2\phi^2_\yY\, \mathrm{e}^{\frac{|\zZ-\yY|^2}{4t}} d\mathcal{H}^2,
\end{split}
\end{equation}
where $\phi_\yY$ is some cutoff function about $\yY$ and $C$ is a positive constant depending only on $\phi_{\yY}$ and the area ratios bound of $M$. Thus, changing variables in \eqref{MonotonEqn} by setting $s=-\ln(-t)$ and $\xX=\mathrm{e}^{s/2}(\zZ-\yY)$, it follows from integrating the resulted inequality against $s$ and the triangle inequality that 
$$
\int_{s_0}^{+\infty} \int_{M_s} \left\vert\mathbf{H}_{M_s}+\frac{\xX^\perp}{2}\right\vert^2 \phi_{(\yY,s)}^2\, \mathrm{e}^{-\frac{|\xX|^2}{4}} \, d\mathcal{H}^2ds<+\infty
$$
for some $s_0$ sufficiently large, where $M_s=M-\mathrm{e}^{s/2}\yY$ and $\phi_{(\yY,s)}(\xX)=\phi_{\yY}(\yY+\mathrm{e}^{-s/2}\xX)$. 
Hence there is an increasing unbounded sequence $s_i$ so that for each $i$,
$$
\int_{s_i}^{+\infty} \int_{M_s} \left\vert\mathbf{H}_{M_s}+\frac{\xX^\perp}{2}\right\vert^2 \phi_{(\yY,s)}^2\, \mathrm{e}^{-\frac{|\xX|^2}{4}} \, d\mathcal{H}^2ds <i^{-2}.
$$
By the mean value theorem, for each $i$, there is a finite set of numbers,
$$
s_{i,m}\in \left[s_i+i^{-1}(m-1),s_{i}+i^{-1}m\right], 
$$
where $m\in\mathbb{Z}^+$ and $m<i(s_{i+1}-s_i)$, so that
$$
\int_{M_{s_{i,m}}} \left\vert\mathbf{H}_{M_{s_{i,m}}}+\frac{\xX^\perp}{2}\right\vert^2 \phi_{(\yY,s_{i,m})}^2\, \mathrm{e}^{-\frac{|\xX|^2}{4}} \, d\mathcal{H}^2<i^{-1}.
$$
Therefore, after relabeling $s_{i,m}$, we get an increasing sequence $\bar{s}_i\to +\infty$ so that $|\bar{s}_{i+1}-\bar{s}_i|\to 0$ and 
\begin{equation} \label{AsympShrinkerEqn}
\int_{M_{\bar{s}_{i}}} \left\vert\mathbf{H}_{M_{\bar{s}_i}}+\frac{\xX^\perp}{2}\right\vert^2 \phi_{(\yY,\bar{s}_i)}^2\, \mathrm{e}^{-\frac{|\xX|^2}{4}} \, d\mathcal{H}^2\to 0.
\end{equation}
As $M$ satisfies the self-shrinker equation \eqref{ShrinkerEqn}, it follows that $M_{\bar{s}_i}$ satisfies
$$
\mathbf{H}_{M_{\bar{s}_i}}+\frac{1}{2}\left(\xX+\mathrm{e}^{\bar{s}_i/2}\yY\right)^\perp=\oO.
$$
Thus \eqref{AsympShrinkerEqn} gives that
\begin{equation} \label{AsympSplitEqn}
\mathrm{e}^{\bar{s}_i} \int_{M_{\bar{s}_i}} |\yY^\perp|^2 \phi_{(\yY,\bar{s}_i)}^2 \, \mathrm{e}^{-\frac{|\xX|^2}{4}} \, d\mathcal{H}^2 \to 0.
\end{equation}

Set $\tau_i=\mathrm{e}^{\bar{s}_i/2}$. Thus $\tau_i$ is an increasing unbounded sequence with $\tau_i^{-1}\tau_{i+1}\to 1$. Next we show that for all $l\in\mathbb{Z}^+$, 
\begin{equation} \label{SublinearCurvEqn}
\sup_{\xX\in M\cap B^3_l(\tau_i \yY)} |\xX|^{-1} |A_M(\xX)|\to 0, \mbox{ as $i\to +\infty$}.
\end{equation}
To see this we argue by contradiction. Suppose that there was a $l\in\mathbb{Z}^+$, a $\zeta>0$, a subsequence $i_k\to +\infty$, and a sequence of points, $\xX_{i_k}\in M\cap B^3_l(\tau_{i_k}\yY)$, so that
\begin{equation} \label{LowCurvEqn}
|\xX_{i_k}|^{-1} |A_M(\xX_{i_k})|>\zeta>0.
\end{equation}
Let $N_{i_k}$ be the scaling of $M$ about $\xX_{i_k}$ by $|\xX_{i_k}|$; that is,
$$
N_{i_k}=|\xX_{i_k}| (M-\xX_{i_k}).
$$
Thus \eqref{LowCurvEqn} gives
\begin{equation} \label{CenterCurvEqn}
|A_{N_{i_k}}(\oO)|>\zeta>0.
\end{equation}
However, it follows from \cite[Theorem 19]{Song} (cf. Theorem \ref{LinearCurvThm}) that
$$
\sup_{\xX\in M} \left(1+|\xX|\right)^{-1} |A_M(\xX)|<C^\prime
$$
for some constant $C^\prime=C^\prime(M)>0$. Thus, for $i_k$ sufficiently large,
\begin{equation} \label{PointCurvEqn}
\sup_{\xX\in N_{i_k}\cap B^3_{1}} |A_{N_{i_k}}(\xX)|<2C^\prime.
\end{equation}
As $|\xX_{i_k}|\leq \tau_{i_k}|\yY|+l$, it follows from \eqref{AsympSplitEqn} that
\begin{equation} \label{L2SplitEqn}
\int_{N_{i_k}\cap B_1^3} |\yY^\perp|^2 \, d\mathcal{H}^2\to 0.
\end{equation}
As $M$ is an end of a self-shrinker in $\Real^3$, the rescaled surface $N_{i_k}$ satisfies 
\begin{equation} \label{AsympTranslatorEqn}
\mathbf{H}_{N_{i_k}}+\frac{1}{2}\left(\frac{\xX_{i_k}^\perp}{|\xX_{i_k}|}+\frac{\xX^\perp}{|\xX_{i_k}|^2}\right)=\oO.
\end{equation}
Hence, passing to a subsequence and relabeling, it follows from \eqref{PointCurvEqn}, the elliptic regularity theory, and the Arzel\`{a}-Ascoli theorem that $N_{i_k}\cap B_1^3$ converges locally smoothly (possibly with multiplicity) to a surface $N$ in $B_1^3$. Furthermore, by \eqref{L2SplitEqn} and \eqref{AsympTranslatorEqn}, $N$ satisfies that
$$
\yY^\perp=\oO \mbox{ and } \mathbf{H}_{N}+\frac{\yY^\perp}{2|\yY|}=\oO,
$$
which further implies that $N$ is flat. However, \eqref{CenterCurvEqn} gives that $|A_{N}(\oO)|\geq\zeta>0$. This is a contradiction.

As $M$ is an end of a self-shrinker in $\Real^3$, which, in particular, is properly embedded by our definition,  it follows from \cite[Theorem 1.3]{CZ} (cf. \cite[Theorem 1.1]{DX}) that $M$ has bounded area ratios. Thus the local Gauss-Bonnet estimate of Ilmanen \cite[Theorem 3]{IlmanenSing} together with \eqref{AsympShrinkerEqn} implies that given $l\in\mathbb{Z}^+$ there is an $i^\prime_l\in\mathbb{Z}^+$ and a $C^{\prime\prime}_l>0$ such that for all $i>i^\prime_l$,
$$
\int_{M\cap B_l^3(\tau_i\yY)} |A_M|^2 \, d\mathcal{H}^2<C^{\prime\prime}_l.
$$
Hence, combining this with \eqref{SublinearCurvEqn}, it follows from Theorem \ref{GraphThm} that given positive integers $k$ and $l$ there is an $i_{k,l}\in\mathbb{Z}^+$ and a $\rho_{k,l}\in (0,1)$ such that for all $i>i_{k,l}$ and $\xX\in M\cap B^3_{l-1}(\tau_i\yY)$, the connected component of $M\cap B^3_{\rho_{k,l}}(\xX)$ containing $\xX$ can be written as the graph of a function over some subset of $T_\xX M$ with its gradient bounded by $1/k$. 

Finally, in view of \eqref{ScalingShrinkerEqn}, it follows from item \eqref{TangentFlow} of Proposition \ref{WhiteProp}, Proposition \ref{SplitProp}, and \cite[Theorem 1.1]{BWHM} that there is a positive integer $L$ and a sequence $\Sigma^{\tau_i}$ of self-shrinkers in $\Real^3$ with the property that either the $\Sigma^{\tau_i}$ are all $\Real_\yY\times\mathbb{S}^1$, or they are all of the form $\Real_\yY\times\Real$ so that the varifold distance between $M-\tau_i\yY$ and $L\Sigma^{\tau_i}$ converges to $0$. This together with the discussion in the preceding paragraph and the Arzel\`{a}-Ascoli theorem implies that given $l\in\mathbb{Z}^+$ there is an $i^{\prime\prime}_l\in\mathbb{Z}^+$ such that for all $i>i^{\prime\prime}_l$, $(M-\tau_i\yY)\cap B_{l-1}^3$ has $L$ connected components and each  component is given by the exponential normal graph of a function over some subset of $\Sigma^{\tau_i}$ with its $C^1$-norm bounded by $1/l$. Without loss of generality let us assume that $i^{\prime\prime}_l$ is increasing in $l$. Therefore, choosing $i_0=i_1^{\prime\prime}$, and $R_i=1/8$ if $i\leq i_1^{\prime\prime}$ and $R_i=l/4$ if $i_l^{\prime\prime}<i\leq i^{\prime\prime}_{l+1}$, items \eqref{SeqComponent}, \eqref{SeqSheet} of the proposition and \eqref{SeqC1Eqn} follow immediately.
\end{proof}

We will also need a special case of a maximum principle for self-shrinkers due to Song \cite[Proposition 6]{Song}. For the sake of completeness we include it here.
\begin{lem} \label{MaxPrincipleLem}
Given $\vV\in\Real^3\setminus\set{\oO}$, let $\tilde{M}$ be a connected, smooth, properly embedded, codimension-one submanifold of $S=\Real_\vV\times\bar{B}^2_2$ possibly with boundary $\partial\tilde{M}\subset\partial S$. If $\tilde{M}\setminus\partial\tilde{M}$ satisfies the self-shrinker equation \eqref{ShrinkerEqn}, and $\tilde{M}$ and $\partial S$ intersect transversally, then $\tilde{M}\cap\bar{B}^3_2\neq\emptyset$.
\end{lem}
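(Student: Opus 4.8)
The plan is to argue by contradiction. Suppose $\tilde M\cap\bar B^3_2=\emptyset$, so that $|\xX|>2$, equivalently $|\xX|^2>4$, on all of $\tilde M$. Write $\mathring{\vV}=|\vV|^{-1}\vV$ and decompose $\xX=s\mathring{\vV}+\pP$ with $s=\xX\cdot\mathring{\vV}$ and $\pP$ the component in the plane $\Real_\vV^\perp$; since $\tilde M\subset S$ we have $|\pP|\le 2$, hence $s^2=|\xX|^2-|\pP|^2>0$ everywhere, so $s$ is nowhere zero, and as $\tilde M$ is connected we may assume $s>0$ on $\tilde M$. Moreover $s^2=|\xX|^2-|\pP|^2\ge |\xX|^2-4$.

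The main input is the standard identity $\mathcal{L}|\xX|^2=2n-|\xX|^2$ on an $n$-dimensional self-shrinker, where $\mathcal{L}=\Delta_{\tilde M}-\tfrac12\xX\cdot\nabla_{\tilde M}(\cdot)$ is the drift Laplacian; with $n=2$ this reads $\mathcal{L}|\xX|^2=4-|\xX|^2<0$ on $\tilde M$ under our assumption. Since $\tilde M$ is properly embedded in $S$ it is closed in $\Real^3$, and $|\xX|$ is bounded below on $\tilde M$, so $R:=\inf_{\tilde M}|\xX|$ is attained at some $\xX_0\in\tilde M$; also $R>2$, since $R=2$ would put $\xX_0\in\bar B^3_2\cap\tilde M$. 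If $\xX_0$ were an interior point, the second derivative test at an interior minimum of $|\xX|^2$ would force $\mathcal{L}|\xX|^2(\xX_0)=\Delta_{\tilde M}|\xX|^2(\xX_0)\ge 0$, contradicting $\mathcal{L}|\xX|^2(\xX_0)=4-R^2<0$ (this is the ``maximum principle'' content; it is the infinitesimal version of the comparison of $\tilde M$ with the self-shrinking sphere $\mathbb{S}^2_2$, which cannot be touched from outside by a self-shrinker at a point farther than $2$ from the origin). Hence $\xX_0\in\partial\tilde M\subset\partial S$, so $|\pP(\xX_0)|=2$ and $s_0:=s(\xX_0)=\sqrt{R^2-4}>0$; by the displayed inequality $s\ge s_0$ on $\tilde M$, with $\{s=s_0\}\cap\tilde M=\{|\xX|=R\}\cap\{|\pP|=2\}\cap\tilde M\subset\partial\tilde M$.

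It remains to rule out this boundary configuration, and this is where the transversality hypothesis is essential. At $\xX_0$ the point simultaneously minimizes $|\xX|^2$ and maximizes $|\pP|^2$ on $\tilde M$; choosing coordinates with $\mathring{\vV}=\mathbf e_1$, $\pP(\xX_0)=2\mathbf e_2$, every curve in $\tilde M$ tangent to $\partial\tilde M$ at $\xX_0$ is critical for both functions, which forces the tangent direction $\mathbf t$ of $\partial\tilde M$ at $\xX_0$ to be orthogonal to both $\xX_0$ and $\pP(\xX_0)$, hence $\mathbf t=\pm\mathbf e_3$. One then combines: (i) the Hopf boundary point lemma applied to the supersolution $|\xX|^2$ of $\mathcal{L}$ at the boundary minimum $\xX_0$, giving $\langle\xX_0,\nu\rangle<0$ for the outward conormal $\nu$ of $\tilde M$ along $\partial\tilde M$; (ii) transversality of $\tilde M$ and $\partial S$, which forces $\nu$ to point out of $S$, i.e. $\langle\pP(\xX_0),\nu\rangle>0$; and (iii) the second-order conditions along $\partial\tilde M$ at $\xX_0$ for the minimum of $|\xX|^2$ and the maximum of $|\pP|^2$, together with the self-shrinker equation at $\xX_0$ (which gives $\operatorname{tr}\mathrm{II}_{\tilde M}(\xX_0)=\tfrac12\langle\xX_0,\nN_{\tilde M}\rangle$), to obtain a contradiction. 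Equivalently, one may run the argument integrally: the divergence theorem for $\mathcal{L}$ on $\tilde M$ with the Gaussian weight yields
\[
\int_{\tilde M}\bigl(4-|\xX|^2\bigr)\mathrm{e}^{-|\xX|^2/4}\,d\mathcal{H}^2=2\int_{\partial\tilde M}\langle\xX,\nu\rangle\,\mathrm{e}^{-|\xX|^2/4}\,d\mathcal{H}^1,
\]
the fluxes at infinity vanishing because a self-shrinker has polynomial area growth (by the monotonicity formula); the left side is strictly negative, and transversality together with the structure of $\partial\tilde M\subset\partial S$ is used to show the right side is nonnegative.

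The hard part is precisely this boundary analysis: near $\xX_0$ the surface $\tilde M$ is only a surface with boundary, so the clean interior comparison with $\mathbb{S}^2_R$ must be upgraded to a corner-type (Serrin-type) argument in which transversality is exactly what prevents $\tilde M$ from being confined to the region $\{|\xX|\ge R\}\cap S$; pinning down the correct combination of first- and second-order boundary identities (or the correct weighted test vector field for the integral version) is where the work lies.
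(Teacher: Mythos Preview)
Your interior argument is clean and correct: the identity $\mathcal{L}|\xX|^2=4-|\xX|^2$ together with the second-derivative test at an interior minimum immediately gives a contradiction. The gap is entirely in the boundary case, and neither of your two sketched routes closes it.

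For the integral route, the divergence identity is correct, but transversality only gives $\langle\pP(\xX_0),\nu\rangle>0$ along $\partial\tilde M$; since $\langle\xX,\nu\rangle=s\langle\mathring\vV,\nu\rangle+\langle\pP,\nu\rangle$ and the $\mathring\vV$-component is uncontrolled (indeed, applying the same divergence trick to the linear function $s$, which satisfies $\mathcal{L}s=-s/2<0$, shows $\int_{\partial\tilde M}\langle\mathring\vV,\nu\rangle\,\mathrm{e}^{-|\xX|^2/4}<0$), there is no reason for the boundary integral to be nonnegative. For the Hopf route, you correctly derive $\langle\xX_0,\nu\rangle<0$ and $\langle\pP(\xX_0),\nu\rangle>0$, hence $\langle\mathring\vV,\nu\rangle<0$, but combining this with the self-shrinker equation and the second-order conditions along $\partial\tilde M$ does not produce an inconsistency: if you carry out the computation with $\nu=a\,\mathbf e_1+b\,\mathbf e_2$, $b>0$, $a<0$, the constraints you list only yield an inequality of the form $\mathrm{II}(\mathbf e_3,\mathbf e_3)\le -a/2$, which is compatible with any value of $H$. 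The phrase ``combine (i), (ii), (iii) to obtain a contradiction'' is doing work that has not been done.

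The paper takes a genuinely different route that sidesteps this boundary difficulty. Rather than studying $|\xX|^2$ on $\tilde M$, it compares $\tilde M$ with another self-shrinker, the hemisphere $N=\partial B^3_2\cap\{\xX\cdot\vV\ge 0\}$, via the vertical height function $h(\xX)=\min\{(\zZ-\xX)\cdot\vV:\zZ\in\tilde M,\;P(\zZ)=P(\xX)\}$ on $N$, where $P$ is projection onto $\vV^\perp$. The infimum of $h$ is attained at some $\xX_0\in N$ with corresponding $\zZ_0\in\tilde M$. The transversality hypothesis is used here in a single clean stroke: if $\zZ_0\in\partial\tilde M$ then $\xX_0\in\partial N$, and one checks $T_{\xX_0}N=T_{\zZ_0}(\partial S)$; transversality of $\tilde M$ with $\partial S$ then makes $T_{\zZ_0}\tilde M$ transverse to $T_{\xX_0}N$, which is incompatible with $h$ having a minimum there. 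So $\zZ_0$ is interior, and then $T_{\zZ_0}\tilde M=T_{\xX_0}N$. Writing both surfaces locally as graphs $f,g$ over $\Pi=\vV^\perp$, the difference $f-g$ satisfies a linear uniformly elliptic equation (because \emph{both} surfaces are self-shrinkers), and has a positive interior minimum, which the equation forbids. The point is that comparing with the sphere rather than with level sets of $|\xX|^2$ converts the boundary problem into an interior one.
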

\begin{proof}
We argue by contradiction. Suppose that $\tilde{M}\cap\bar{B}^3_2=\emptyset$. As $\tilde{M}$ is connected, we may also assume that $\tilde{M}\subset\Real_\vV^{+}\times\bar{B}^2_2$. Let 
$$
N=\partial B^3_2\cap\set{\xX\in\Real^3\colon \xX\cdot\vV\geq 0},
$$
and let $P\colon \Real^3\to\Real^3$ be the orthogonal projection onto the static plane $\Pi$ normal to $\vV$.
Define a function $h\colon N\to\Real$ by 
$$
h(\xX)=\min\set{(\zZ-\xX)\cdot\vV\colon\zZ\in\tilde{M} \mbox{ with } P(\zZ)=P(\xX)}.
$$
Here we adopt the convention that $h(\xX)=+\infty$ if there do not exist $\zZ\in\tilde{M}$ so that $P(\zZ)=P(\xX)$. As $\tilde{M}$ is properly embedded, $h$ is well-defined. Moreover, by our assumption, $h>0$. Thus there is a constant $C\geq 0$ such that
$$
\inf\set{h(\xX)\colon\xX\in N}=C.
$$

Take a sequence of points, $\xX_i\in N$, so that $h(\xX_i)\to C$. For each $i$, there is a unique $\zZ_i\in\tilde{M}$ so that $P(\zZ_i)=P(\xX_i)$ and $h(\xX_i)=(\zZ_i-\xX_i)\cdot\vV$. As $\tilde{M}$ is properly embedded, after passing to a subsequence and relabeling, it follows that $\zZ_i\to\zZ_0$ for some $\zZ_0\in\tilde{M}$ and $h(\xX_0)=C$ where $\xX_0\in N$ with $P(\xX_0)=P(\zZ_0)$. We show $\zZ_0\notin\partial\tilde{M}$, implying that $h$ attains its minimum at $\xX_0\in\mathring{N}$. If $\zZ_0\in\partial\tilde{M}$, then $\xX_0\in\partial N$ and thus $\vV$ is parallel to $T_{\xX_0}N$. Observe that $T_{\xX_0}N=T_{\zZ_0}(\partial S)$. As $\tilde{M}$ and $\partial S$ intersect transversally, the claim follows from the transversality of $T_{\xX_0}N$ and $T_{\zZ_0}\tilde{M}$. 

Next we observe that $T_{\xX_0}N=T_{\zZ_0}\tilde{M}$. As $\xX_0\in\mathring{N}$, there exist neighborhoods $V\subset\tilde{M}$ of $\zZ_0$ and $W\subset N$ of $\xX_0$ such that $V$ and $W$ are given by the graphs of functions $f$ and $g$, respectively, over some subset of $\Pi$. Moreover, $f-g$ has a positive local minimum at $P(\xX_0)$ in the interior of its domain. However, as both $V$ and $W$ satisfy the self-shrinker equation \eqref{ShrinkerEqn}, it follows from a direct computation that $f-g$ satisfies 
$$
\sum_{k,l=1}^2 a_{kl} \partial_{k}\partial_{l} (f-g)+\sum_{k=1}^2 b_k \partial_{k} (f-g)+\frac{1}{2} (f-g)=0,
$$
where, without loss of generality, we assume that $\Pi$ is the $x_1x_2$-plane, $\partial_k$ denotes to take the partial derivative with respect to $x_k$, $a_{kl}$ and $b_k$ smoothly depends on $f$, $g$, and their derivatives up to second order, and the matrix $(a_{kl})$ is symmetric and uniformly positive definite.
Hence, by the maximum principle for uniformly elliptic equations, $f-g$ cannot have any positive local minimum in the interior of its domain, leading to a contradiction.
\end{proof}

\begin{proof}[Proof of Theorem \ref{SheetThm}]
By Proposition \ref{SeqSheetProp} and Brakke's regularity theorem \cite[Section 6]{B} (cf. \cite{WhReg}), it suffices to show that, for all $i$ sufficiently large, the connected components of $M\cap B^3_{R_i}(\tau_i\yY)$ extend to infinity and remain disjoint in the solid half-cylinder $I_{\yY}(\tau_i|\yY|,+\infty)\times B^2_{R_i}$.

For $i,k\in\mathbb{Z}^+$ and $j\in\set{1,\ldots,L}$, let
$$
\gamma_j^{i,k}=\left(M_j^{\tau_k}+\tau_k\yY\right)\cap B^3_{R_i}(\tau_k\yY) \cap\set{\xX\in\Real^3\colon\xX\cdot\yY=\tau_k|\yY|^2}.
$$
First we show that given $i$ sufficiently large, for any $k\geq i$ and any connected component  
$$
M^{i,k}\subset M\cap \left(I_{\yY}(\tau_k|\yY|,\tau_{k+1}|\yY|)\times B^2_{R_i}\right),
$$
the two sets
$$
\partial M^{i,k}\cap\set{\xX\in\Real^3\colon\xX\cdot\yY=\tau_k|\yY|^2}
\mbox{ and }
\partial M^{i,k}\cap\set{\xX\in\Real^3\colon\xX\cdot\yY=\tau_{k+1}|\yY|^2}
$$
are unions of the same number of elements of $\set{\gamma_j^{i,k}}$ and $\set{\gamma_j^{i,k+1}}$, respectively. 

Fix any $k\geq i$. We may assume $R_i<(\tau_{k+1}-\tau_k)|\yY|$; otherwise, the claim follows from Proposition \ref{SeqSheetProp}.
As $M$ satisfies the self-shrinker equation \eqref{ShrinkerEqn}, so does $M^{i,k}$. Thus the family, $M_s^{i,k}=M^{i,k}-\mathrm{e}^{s/2}\yY$, satisfies
\begin{equation} \label{RescaleMCFEqn}
\left(\partial_s\xX\right)^\perp=\mathbf{H}_{M_s^{i,k}}+\frac{\xX^\perp}{2}.
\end{equation}
Define 
$$
s^+_k=2\ln\left(\tau_k+(2|\yY|)^{-1}R_i\right) \mbox{ and } s^{-}_{k+1}=2\ln\left(\tau_{k+1}-(2|\yY|)^{-1}R_i\right).
$$
Observe that $s_k^+<s_{k+1}^{-}$ and $B^3_{R_i/2}\cap \partial M^{i,k}_s=\emptyset$ for all $s\in [s_k^+,s^{-}_{k+1}]$.
Let $\psi$ be a cutoff function about $\oO$ which satisfies that $\psi\equiv 1$ in $B^3_{R_i/4}$, $\psi\equiv 0$ outside of $B^3_{R_i/2}$, and $|D\psi|\leq 8R_i^{-1}$. Hence it follows from \eqref{RescaleMCFEqn} and the first variation formula for the area functional that
\begin{align*}
\frac{d}{ds}\int_{M^{i,k}_s} \psi^2 \, \mathrm{e}^{-\frac{|\xX|^2}{4}} \,  d\mathcal{H}^2 = & -\int_{M^{i,k}_s} \left\vert\mathbf{H}_{M^{i,k}_s}+\frac{\xX^\perp}{2}\right\vert^2 \psi^2 \, \mathrm{e}^{-\frac{|\xX|^2}{4}} \, d\mathcal{H}^2 \\
 & +2\int_{M^{i,k}_s} D\psi\cdot\left(\mathbf{H}_{M^{i,k}_s}+\frac{\xX^\perp}{2}\right)\psi \, \mathrm{e}^{-\frac{|\xX|^2}{4}} \, d\mathcal{H}^2.
\end{align*}
Furthermore, integrating this from $s^+_k$ to $s^{-}_{k+1}$ and using the Cauchy-Schwarz inequality give that
\begin{equation} \label{MassDropEqn}
\begin{split}
& \left\vert\int_{M^{i,k}_{s^{+}_{k}}} \psi^2\, \mathrm{e}^{-\frac{|\xX|^2}{4}} \, d\mathcal{H}^2-\int_{M^{i,k}_{s^{-}_{k+1}}} \psi^2\, \mathrm{e}^{-\frac{|\xX|^2}{4}} \, d\mathcal{H}^2\right\vert \\
\leq & 2\int_{s^{+}_k}^{s^{-}_{k+1}} \int_{M^{i,k}_s} \left\vert\mathbf{H}_{M^{i,k}_s}+\frac{\xX^\perp}{2}\right\vert^2 \psi^2 \, \mathrm{e}^{-\frac{|\xX|^2}{4}} \, d\mathcal{H}^2 ds+C\left(s^{-}_{k+1}-s^+_k\right)\mathrm{e}^{-\frac{R_i^2}{64}},
\end{split}
\end{equation}
where $C$ depends only on the area ratios bound of $M$.

Recall, that $i$ is assumed to be sufficiently large and $k\ge i$. By the definitions of $s_k^+$ and $s_{k+1}^{-}$ and by our choice of the sequence $\tau_i$,
\begin{equation} \label{TimeEqn}
0<s_{k+1}^{-}-s_k^+<2\ln (\tau_{k+1}/\tau_k)<1.
\end{equation}
Like in the proof of Proposition \ref{SeqSheetProp}, it follows from the local monotonicity formula and changing variables that
\begin{equation} \label{DecayMassDropEqn}
\lim_{\bar{s}\to +\infty} \int_{\bar{s}}^{+\infty}\int_{M_s} \left\vert\mathbf{H}_{M_s}+\frac{\xX^\perp}{2}\right\vert^2 \, \phi^2_{(\yY,s)} \, \mathrm{e}^{-\frac{|\xX|^2}{4}} \, d\mathcal{H}^2 ds=0,
\end{equation}
where $M_s=M-\mathrm{e}^{s/2}\yY$ and $\phi_{(\yY,s)}(\xX)=\phi_\yY(\yY+\mathrm{e}^{-s/2}\xX)$ for $\phi_\yY$ any given cutoff function about $\yY$. In particular, we may assume that $\phi_{\yY}\equiv 1$ in $B^3_{|\yY|/2}(\yY)$. Thus, for each $s\in [s_k^+,s_{k+1}^{-}]$,
\begin{equation} \label{SupportEqn}
M^{i,k}_s\cap {\rm spt}(\psi)\subset M_s\cap\phi_{(\yY,s)}^{-1}(1).
\end{equation}

Hence, \eqref{MassDropEqn} together with \eqref{TimeEqn}, \eqref{DecayMassDropEqn}, and \eqref{SupportEqn} implies that
\begin{equation} \label{SmallMassDropEqn}
\left\vert\int_{M^{i,k}_{s^+_k}} \psi^2 \, \mathrm{e}^{-\frac{|\xX|^2}{4}} \, d\mathcal{H}^2-\int_{M^{i,k}_{s^{-}_{k+1}}} \psi^2 \, \mathrm{e}^{-\frac{|\xX|^2}{4}} \, d\mathcal{H}^2\right\vert
<C^\prime(M,i),
\end{equation}
where $C^\prime(M,i)\to 0$ as $i\to +\infty$. Therefore the claim follows from \eqref{SmallMassDropEqn} and Proposition \ref{SeqSheetProp}.

Next we fix any $i$ large enough so that the preceding claim holds. Suppose that, for some $k_0\geq i$ and some connected component $M^{i,k_0}$, 
$$
\partial M^{i,k_0}\cap\set{\xX\in\Real^3\colon\xX\cdot\yY=\tau_{k_0+1}|\yY|^2}=\bigcup_{l=1}^m \gamma^{i,k_0+1}_{j_l},
$$
where $m\geq 2$. Pick points $\zZ_l\in\gamma^{i,k_0+1}_{j_l}$ for $l\in\set{1,2}$. Let $\gamma_0$ be a smooth embedded curve in $M^{i,k_0}$ joining $\zZ_1$ and $\zZ_2$. By the preceding claim, for $l\in\set{1,2}$, there is a smooth embedded curve
$$
\gamma_l \subset M \cap \left(I_{\yY}(\tau_{k_0+1}|\yY|,+\infty)\times B_{R_i}^2\right)
$$
emanating from $\zZ_l$ and extending to infinity, and $\bar{\gamma}_1\cap\bar{\gamma}_2=\emptyset$. By a standard smoothing process and Proposition \ref{SeqSheetProp}, we may assume that $\gamma=\bar{\gamma}_0\cup\bar{\gamma}_1\cup\bar{\gamma}_2$ is a smooth embedded curve and that $\gamma$ and $\gamma^{i,k_0+1}_{j_l}$ intersect transversally only at $\zZ_l$ for $l\in\set{1,2}$. From the curve $\gamma$ we will construct, in the following paragraphs, a simple closed curve in $M$ that does not separate $M$. However, as $M$ is an end of self-shrinker in $\Real^3$ of finite topology, we may assume that $\bar{M}$ is homeomorphic to $\bar{B}^2_1\setminus\set{\oO}$. This leads to a contradiction. Therefore, for all $k\geq i$, each connected component of $M\cap B^3_{R_i}(\tau_k\yY)$ is joint, via the surface $M\cap (I_{\yY}(\tau_{k}|\yY|,\tau_{k+1}|\yY|)\times B_{R_i}^2)$, with exactly one of the connected components of $M\cap B^3_{R_i}(\tau_{k+1}\yY)$, and vice versa. This implies the claim in the beginning of the proof, concluding the theorem.

If $\Sigma^{\tau_{k_0+1}}=\Real_\yY\times\mathbb{S}^1$, then it follows from Proposition \ref{SeqSheetProp} that $\gamma^{i,k_0+1}_{j_1}$ is a simple closed smooth curve. By our construction, $\gamma$ and $\gamma^{i,k_0+1}_{j_1}$ intersect transversally only at $\zZ_1$. As $\bar{M}$ is connected at infinity, there exist points $\zZ^\prime_1\in\gamma_1$ and $\zZ^\prime_2\in\gamma_2$ so that they can be joint by a smooth embedded curve $\gamma^\prime$ in $M$ outside a large enough ball so that $\gamma^\prime\cap\gamma^{i,k_0+1}_{j_1}=\emptyset$. Denote by $\gamma^{\prime\prime}$ the part of $\gamma$ joining $\zZ^\prime_1$ and $\zZ^\prime_2$ that contains $\gamma_0$. Thus, again by a smoothing process, $\overline{\gamma^\prime\cup\gamma^{\prime\prime}}$ can be made a simple closed smooth curve in $M$ that transversally intersects $\gamma^{i,k_0+1}_{j_1}$ only at $\zZ_1$. This implies that $\overline{\gamma^\prime\cup\gamma^{\prime\prime}}$ does not separate $M$.

If $\Sigma^{\tau_{k_0+1}}=\Real_\yY\times\Real$, then we define $\tilde{\gamma}^{i,k_0+1}_{j_1}$ to be
$$
\left(M^{\tau_{k_0+1}}_{j_1}+\tau_{k_0+1}\yY\right)\cap B^3_{\frac{3R_i}{2}}(\tau_{k_0+1}\yY)\cap\set{\xX\in\Real^3 \colon\xX\cdot\yY=\tau_{k_0+1}|\yY|^2}.
$$
Let $\tilde{\zZ}_1$ and $\tilde{\zZ}_2$ be the ending points of $\tilde{\gamma}^{i,k_0+1}_{j_1}$. Thanks to Thom's transversality theorem, as we may assume $\bar{M}\subset\Real^3\setminus\bar{B}^3_4$, it follows from Lemma \ref{MaxPrincipleLem} that for $l\in\set{1,2}$, there is a curve $\tilde{\gamma}_l$ in $M\cap (\Real_{\tilde{\zZ}^\prime_l}\times\bar{B}_2^2)$ joining $\tilde{\zZ}_l$ to $\partial M$, where $\tilde{\zZ}_l^\prime$ can be chosen arbitrarily close to $\tilde{\zZ}_l$. Note that $\Real_{\tilde{\zZ}^\prime_l}\times\bar{B}_2^2$ is disjoint from $I_\yY(\frac{3}{4}\tau_{k_0+1}|\yY|,+\infty)\times B^2_{R_i}$. However, $\gamma$ is contained in $I_\yY(\tau_{k_0}|\yY|,+\infty)\times B^2_{R_i}$.
As $\tau_{k_0+1}\tau_{k_0}^{-1}$ is very close to $1$, $\gamma$ is disjoint from the closure of $\tilde{\gamma}_l$ for $l\in\set{1,2}$. Thus the closure of $\tilde{\gamma}_1\cup\tilde{\gamma}^{i,k_0+1}_{j_1}\cup\tilde{\gamma}_2$ and $\gamma$ intersect transversally only at $\zZ_1$. Finally, by the same reasoning, one can modify $\gamma$ to become a closed simple curve $\tilde{\gamma}$ with the above intersection property. As $\partial M$ is connected by our assumption, henceforth, $\tilde{\gamma}$ does not separate $M$.
\end{proof}

\section{Proof of Theorem \ref{MainThm}}
In what follows we finish the proof of Theorem \ref{MainThm}. By Brakke's regularity theorem \cite{B} (cf. \cite{WhReg}) the key is to show that the positive integer $L$ given in Theorem \ref{SheetThm} always equals $1$. For this purpose we adapt, in a straightforward manner, an argument of White \cite[Section 7]{Ch} for mean curvature flow of curves to higher dimensions.
\begin{proof}[Proof of Theorem \ref{MainThm}]
Let $\mathcal{K}$ be the associated Brakke flow for $M$, and let $\Lambda$ be the asymptotic cone of $M$ (cf. Proposition \ref{HausdorffAsympProp}). Take any $\yY\in\Lambda$ fixed; that is, $\yY\in\Real^3\setminus\set{\oO}$ with $\Theta_{(\yY,0)}(\mathcal{K})\geq 1$. Apply Theorem \ref{SheetThm} to $M$ and $\yY$. Let $L, \tau_0, R(\tau)$, and $\set{\Sigma^\tau}_{\tau>\tau_0}$ be given in the theorem. We hope to show the positive integer $L$ equal to $1$. We divide the proof into two cases according to the geometric property of the collection $\set{\Sigma^\tau}_{\tau>\tau_0}$ of self-shrinkers in $\Real^3$. 

Suppose that the $\Sigma^\tau$ are all $\Real_\yY\times\mathbb{S}^1$. As $M$ is connected, it follows that $L=1$ and, moreover, $\Lambda=\set{\tau\yY\colon\tau>0}$. Thus item \eqref{Cylinder} of Theorem \ref{MainThm} holds with $\vV(M)=\yY$ by Brakke's regularity theorem \cite{B} (cf. \cite{WhReg}). 

Suppose that the $\Sigma^\tau$ are all of the form $\Real_\yY\times\Real$. We argue by contradiction as in \cite[Section 7]{Ch}. Suppose $L>1$. As $M$ is embedded we can relabel indices so that for each $\tau>\tau_0$, the functions $f^\tau_j$, $j\in\set{1,\ldots,L}$, given in Theorem \ref{SheetThm} are ordered in the following manner: $f^\tau_{1}<f^\tau_{2}<\ldots<f^\tau_L$.
Define, for $\tau>\tau_0$,
$$
\delta(\tau)=f^\tau_L(\oO)-f^\tau_1(\oO).
$$
By \eqref{C1EstEqn}, $\delta(\tau)\to 0$ as $\tau\to +\infty$. Thus we may choose a sequence $\bar{\tau}_m\nearrow +\infty$ so that 
\begin{equation} \label{DeltaEqn}
\delta(\bar{\tau}_m)=\sup_{\tau\geq\bar{\tau}_m} \delta(\tau).
\end{equation}
Thus it follows from item \eqref{TangentFlow} of Proposition \ref{WhiteProp}, Theorem \ref{SheetThm}, and \cite[Theorem 3.4]{EHInterior} that, after passing to a subsequence and relabeling, $\mathcal{K}^{(\yY,0),\bar{\tau}_m}\cap(\Real^3\times (-\infty,0))$ converges smoothly to $\mathcal{T}=\set{\nu_t}_{t<0}$ where each $\nu_t=L\mathcal{H}^2\lfloor\Pi$ for some fixed static plane $\Pi\subset\Real^3$. Hence there is $\Omega_1\subset\Omega_2\subset\cdots\subset\Pi$ and $I_1\subset I_2\subset\cdots\subset\Real$ with $\bigcup\Omega_m=\Pi$ and $\bigcup I_m=\Real$ such that for each m, 
$$
\mathcal{K}^{(\yY,0),\bar{\tau}_m}\cap(\Omega_m\times\Real\times I_m)=\set{\mathcal{H}^2\lfloor \bigcup_{j=1}^L {\rm graph}(v^m_j(\cdot,t))}_{t\in I_m}
$$
where $v^m_1<v^m_2<\cdots<v^m_L$ are small functions on $\Omega_m\times I_m$.
Without loss of generality we assume $\Pi$ to be the $x_1x_2$-plane. Then $(v^m_L-v^m_1)$ satisfies a uniformly parabolic equation which is a small perturbation of the heat equation:
$$
\partial_t (v^m_L-v^m_1)=\sum_{k,l=1}^2 (\delta_{kl}+a^m_{kl})\partial_{k}\partial_{l} (v^m_L-v^m_1)+\sum_{k=1}^2 b^m_k \partial_{k} (v^m_L-v^m_1)+c^m(v^m_L-v^m_1),
$$
where $\partial_k$ denotes to take the partial derivative with respect to $x_k$. Moreover, the coefficients $a^m_{kl},b^m_k, c^m$ all converge to $0$ as $m\to +\infty$.

We define
$$
w_m(\pP,t)=\frac{v^m_L(\pP,t)-v^m_1(\pP,t)}{v^m_L(\oO,-1)-v_1^m(\oO,-1)}.
$$
Let $\tau_{m,t}=(-t)^{-1/2}\bar{\tau}_m$ for $t\in I_m$. Note that
$$
(1+\epsilon_m)^{-1}(-t)^{1/2}\delta(\tau_{m,t})<w_m(\oO,t)<(1+\epsilon_m)(-t)^{1/2}\delta(\tau_{m,t}),
$$
and $\epsilon_m\to 0^+$. Thus it follows from our choice of $\bar{\tau}_m$ (cf. \eqref{DeltaEqn}) that 
$$
w_m(\oO,t)<(1+\epsilon_m)^2(-t)^{1/2}w_m(\oO,-1)
$$
for $-1\leq t<0$. This together with the parabolic Harnack inequality implies that $w_m$ is uniformly bounded on compact subsets of $\Real^2\times (-\infty,0)$.
Hence, after passing to a subsequence and relabeling, $w_m\to w$ in $C^\infty_{loc}(\Real^2\times (-\infty,0))$, and $w$ satisfies the heat equation on $\Real^2$.
Moreover, for $-1\le t<0$, we have
\begin{equation} \label{UpperDecayEqn}
w(\oO,t)\leq (-t)^{1/2}w(\oO,-1).
\end{equation}
However, we may choose $c>0$ so that 
$$
c\mathrm{e}^{-\lambda_0}\psi_0(\pP)\leq w(\pP,-1),
$$
where $\lambda_0$ and $\psi_0$ are the first eigenvalue and the corresponding (positive) eigenfunction, respectively, of the Laplacian on $B^2_1$ with Dirichlet boundary condition. As $c\mathrm{e}^{\lambda_0 t}\psi_0$ also solves the heat equation on $\Real^2$, the strong maximum principle for parabolic equations implies that $$
w(\oO,t)\geq c\mathrm{e}^{\lambda_0 t}\psi_0(\oO)
$$
for all $t\in [-1,0)$. This contradicts \eqref{UpperDecayEqn}, and thus, $L$ must equal $1$. Hence, by Brakke's regularity theorem \cite{B} (cf. \cite{WhReg}), the curvature of $\sqrt{-t}\, M$ for $t\in [-1,0)$ is uniformly bounded in some neighborhood of $\yY$ in $\Real^3$. 

Therefore, if, for all $\yY\in\Lambda$, the $\Sigma^\tau$ given in Theorem \ref{SheetThm} are all of the form $\Real_\yY\times\Real$, then $\sqrt{-t}\, M$ converges in $C^\infty_{loc}(\Real^3\setminus\set{\oO})$ to $\Lambda$ as $t\to 0^{-}$, and in particular, $\Lambda$ is a regular cone in $\Real^3$. This implies item \eqref{RegCone} of Theorem \ref{MainThm} with $\mathcal{C}(M)=\Lambda$.
\end{proof}

\appendix

\section{} 
For readers' convenience we give a recounting of \cite[Theorem 19]{Song} on the linear growth bound for the second fundamental form of an end of a noncompact self-shrinker in $\Real^3$ of finite topology.
\begin{thm} \label{LinearCurvThm}
Let $M$ be an end of a self-shrinker in $\Real^3$ of finite topology. There is a constant $C_1=C_1(M)>0$ so that 
$$
|A_M(\xX)|\le C_1\left(|\xX|+1\right)
$$
for all $\xX\in M$.
\end{thm}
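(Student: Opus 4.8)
The plan is to argue by contradiction by means of a point-selection blow-up; the bound we want should be thought of as the extension, to the full second fundamental form, of the a priori estimate $|H_M|=|\mathbf{x}^\perp|/2\le|\mathbf{x}|/2$ that is immediate from \eqref{ShrinkerEqn}. So suppose the asserted bound fails for every $C_1$. Then there are points $\mathbf{x}_i\in M$ with $|A_M(\mathbf{x}_i)|/(1+|\mathbf{x}_i|)\to+\infty$. Since $M$ is an end of a smooth, complete, properly embedded self-shrinker, $|A_M|$ is bounded on $\bar M\cap\bar B^3_R$ for every $R$, so necessarily $|\mathbf{x}_i|\to+\infty$; we may also assume $\partial M\subset B^3_{R_0}$ and $|\mathbf{x}_i|\ge4R_0$.

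Applying a standard point-picking argument to $\mathbf{y}\mapsto|A_M(\mathbf{y})|^2(c|\mathbf{x}_i|-|\mathbf{y}|)^2$ on the compact piece $M\cap\bar B^3_{c|\mathbf{x}_i|}$ (for a fixed $c>1$), I would produce points $\mathbf{p}_i\in M\setminus\partial M$ and scales $r_i>0$ with $\rho_i:=|\mathbf{p}_i|\to+\infty$, with $a_i:=|A_M(\mathbf{p}_i)|$ satisfying $a_i/\rho_i\to+\infty$, with $r_i<\rho_i/2$, $a_ir_i\to+\infty$, and $\sup_{M\cap B^3_{r_i}(\mathbf{p}_i)}|A_M|\le2a_i$; in particular $B^3_{r_i}(\mathbf{p}_i)$ lies in the interior of $M$ and $|\mathbf{x}|\in[\tfrac12\rho_i,2\rho_i]$ there. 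Consider the blow-up $N_i:=a_i(M-\mathbf{p}_i)$. Then $|A_{N_i}(\mathbf{0})|=1$, $\sup_{N_i\cap B^3_{a_ir_i}}|A_{N_i}|\le2$ with $a_ir_i\to+\infty$, and the $N_i$ have uniformly bounded area ratios because $M$ does. From \eqref{ShrinkerEqn}, $\mathbf{H}_{N_i}(\mathbf{z})=-\tfrac{1}{2a_i}(\mathbf{p}_i+a_i^{-1}\mathbf{z})^\perp$, so $|\mathbf{H}_{N_i}(\mathbf{z})|\le\rho_i/(2a_i)+a_i^{-2}|\mathbf{z}|/2\to0$ locally uniformly. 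Hence, after passing to a subsequence, $N_i\to N_\infty$ in $C^\infty_{loc}(\mathbb{R}^3)$ (a priori with multiplicity), where $N_\infty$ is a complete, properly embedded, minimal surface in $\mathbb{R}^3$ with Euclidean area growth and $|A_{N_\infty}(\mathbf{0})|=1$; in particular $N_\infty$ is not a plane.

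To pin down $N_\infty$ I would use that $M$ has finite genus. By Ilmanen's local Gauss--Bonnet estimate \cite[Theorem 3]{IlmanenSing} there is a constant $C^\ast$, depending only on the genus and area-ratio bound of $M$, with $\int_{M\cap B^3_1(\mathbf{q})}|A_M|^2\,d\mathcal{H}^2\le C^\ast$ for every $\mathbf{q}\in\mathbb{R}^3$. Since the total curvature is scale invariant in dimension two and $R/a_i\le1$ eventually,
$$
\int_{N_\infty\cap B^3_R}|A_{N_\infty}|^2\,d\mathcal{H}^2\le\liminf_{i\to+\infty}\int_{M\cap B^3_{R/a_i}(\mathbf{p}_i)}|A_M|^2\,d\mathcal{H}^2\le C^\ast
$$
for every $R>0$, so $N_\infty$ has finite total curvature. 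Being a locally smooth limit of subdomains of $M$, and $M$ being an end of a surface of finite topology and hence of genus zero, $N_\infty$ has genus zero; therefore, by the L\'opez--Ros classification of complete, properly embedded, genus-zero minimal surfaces of finite total curvature, $N_\infty$ is a catenoid.

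It remains to exclude a catenoidal blow-up, and I expect this to be the main obstacle. The idea is that a catenoidal bubble at $\mathbf{p}_i$ forces $M$, at scales between $a_i^{-1}$ and $\rho_i^{-1}$, to consist of two nearly flat sheets joined by a neck of size $\sim a_i^{-1}$, both sheets nearly perpendicular to the catenoid axis $\mathbf{a}_i$; but on those sheets $|\mathbf{x}^\perp|=2|\mathbf{H}_M|$ is small, which — since $\mathbf{x}\approx\mathbf{p}_i$ and $\mathbf{a}_i\perp$ sheet — forces $|\mathbf{p}_i\cdot\mathbf{a}_i|$ small, so that $M$ would be modeled near $\mathbf{p}_i$ on a plane through the origin carrying a small handle. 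Quantifying this, via a second blow-up at scale $\rho_i^{-1}$ together with the bound $|\mathbf{x}^\perp|=2|\mathbf{H}_M|$ on $B^3_{r_i}(\mathbf{p}_i)$, and then invoking the rigidity of the self-shrinker equation — no self-shrinker is asymptotically a plane through the origin with a handle attached, in the spirit of the unique-continuation arguments of \cite{WaCone} — would yield the contradiction and complete the proof. The point-selection and the identification of the blow-up limit are routine; making this last step precise, and establishing the exact scale-invariant form of the Gauss--Bonnet bound used above, is where the work lies.
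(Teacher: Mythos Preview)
Your blow-up set-up, point selection, use of Ilmanen's local Gauss--Bonnet estimate to bound the total curvature of the rescalings, and the identification of the limit as a catenoid via L\'opez--Ros all match the paper's argument essentially line for line. (One caution: your stated uniform bound $\int_{M\cap B^3_1(\mathbf{q})}|A_M|^2\le C^\ast$ cannot hold independently of $|\mathbf{q}|$, since Ilmanen's estimate involves a term like $r^2\sup|H|^2$ and $|H_M|\sim|\mathbf{q}|$; what you actually need, and what the paper uses, is the bound on balls of radius $\sim|\mathbf{p}_i|^{-1}$, where that term is $O(1)$.)

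The genuine gap is the exclusion of the catenoid. Your heuristic that ``on the sheets $|\mathbf{x}^\perp|=2|\mathbf{H}_M|$ is small'' does not follow: smooth convergence of $N_i$ to the catenoid gives $|H_{N_i}|\to 0$ on compact sets, hence only $|H_M|=a_i|H_{N_i}|=a_i\cdot o(1)$, which says nothing about $|H_M|$ itself. Equivalently, from $|\mathbf{x}^\perp|\approx|\mathbf{p}_i\cdot\mathbf{e}|$ on the sheets you only recover $|\mathbf{p}_i\cdot\mathbf{e}|/a_i\to 0$, already implied by $\rho_i/a_i\to 0$. So the proposed route through a second blow-up and unique continuation has no traction as stated.

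The paper's mechanism is entirely different and purely topological. The neck of the catenoidal bubble gives a simple closed curve $\gamma_i\subset M$ with two complementary pieces $M_i^\pm$. Since $\bar M$ is an annulus, every simple closed curve in $M$ separates; the contradiction comes from showing that \emph{both} $M_i^+$ and $M_i^-$ can be joined to $\partial M$ inside $\bar M$ without crossing $\gamma_i$, so $\gamma_i$ does not separate. The paper produces these paths by two maximum-principle arguments, splitting on whether $\mathbf{v}=\lim\mathbf{p}_i/|\mathbf{p}_i|$ is parallel to the catenoid axis $\mathbf{e}$ or not. If $\mathbf{v}\nparallel\mathbf{e}$, then $\inf_{M_i^\pm}|\mathbf{x}|<\inf_{\gamma_i}|\mathbf{x}|=:a_i$, and the drift-Laplacian identity $\Delta_M|\mathbf{x}|^2-\tfrac12\mathbf{x}\cdot\nabla_M|\mathbf{x}|^2+|\mathbf{x}|^2-4=0$ forbids interior minima of $|\mathbf{x}|^2$ on $M\setminus\partial M$ (recall $\bar M\subset\Real^3\setminus\bar B^3_4$), so each $M_i^\pm$ reaches $\partial M$ through $\bar M\cap B^3_{a_i}$. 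If $\mathbf{v}\parallel\mathbf{e}$, one instead finds a solid cylinder $S_i=\Real_{\mathbf{v}_i}\times\bar B^2_2$ meeting both $M_i^\pm$ but missing $\gamma_i$, and then Lemma~\ref{MaxPrincipleLem} (the barrier $\partial B^3_2$ for self-shrinkers in a radius-$2$ tube) forces each component of $\bar M\cap S_i$ to meet $\bar B^3_2$, hence $\partial M$. Either way $\gamma_i$ fails to separate $\bar M$, the desired contradiction.
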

\begin{proof}
We argue by contradiction. Suppose that there was a sequence of points $\xX_i\in M$ with $|\xX_i|\to +\infty$ so that
$$
|\xX_i|^{-1}|A_M(\xX_i)|\to +\infty.
$$
Without loss of generality we may assume that $\bar{M}$ is a subset of $\Real^3\setminus\bar{B}_4$ and is homeomorphic to $\bar{B}^2_1\setminus\set{\oO}$.

Letting $r_i=|\xX_i|^{-1}$, we define, on $\bar{B}^3_{r_i}(\xX_i)\cap M$,
$$
f_i(\xX)=\left(r_i-|\xX-\xX_i|\right)|A_M(\xX)|.
$$
Observe that $f_i\equiv0$ in $\partial B^3_{r_i}(\xX_i)\cap M$. Thus $f_i$ attains its maximum at some point $\zZ_i\in B^3_{r_i}(\xX_i)\cap M$. Let $\sigma_i=\frac{1}{2}(r_i-|\zZ_i-\xX_i|)$. By the triangle inequality, for any $\xX\in B^3_{\sigma_i}(\zZ_i)$,
$$
r_i-|\xX-\xX_i| \geq \left(r_i-|\zZ_i-\xX_i|\right)-|\xX-\zZ_i|>\sigma_i.
$$
Thus it follows that
\begin{equation} \label{DoubleCurvEqn}
\sup_{\xX\in B^3_{\sigma_i}(\zZ_i)\cap M} |A_M (\xX)| \leq \sup_{\xX\in B^3_{\sigma_i}(\zZ_i)\cap M} \left(r_i-|\xX-\xX_i|\right)^{-1} f_i(\zZ_i)<2|A_M(\zZ_i)|.
\end{equation}

By the self-shrinker equation \eqref{ShrinkerEqn}, 
\begin{equation} \label{MeanCurvEqn}
\sup_{\xX\in B^3_{\sigma_i}(\zZ_i)\cap M} |H_M(\xX)|\leq |\zZ_i|.
\end{equation}
Thus, together with \eqref{MeanCurvEqn}, \cite[Theorem 1.3]{CZ}, and our assumption on the topology of $\bar{M}$, the local Gauss-Bonnet estimate of Ilmanen \cite[Theorem 3]{IlmanenSing} implies that
\begin{equation} \label{IntegralCurvEqn}
\int_{B^3_{\sigma_i}(\zZ_i)\cap M} |A_M|^2 \, d\mathcal{H}^2<C
\end{equation}
for some $C$ independent of $i$.

As $f_i(\zZ_i)\ge f_i(\xX_i)$, we have that
\begin{equation} \label{ScaleRadEqn}
\sigma_i |A_M(\zZ_i)|>\frac{1}{2} |\xX_i|^{-1}|A_M(\xX_i)|\to +\infty.
\end{equation}
Define the rescaled surfaces $N_i=|A_M(\zZ_i)|(\Sigma-\zZ_i)$. Thus $N_i$ satisfies
\begin{equation} \label{ScaleSurfaceEqn}
\mathbf{H}_{N_i}+\frac{1}{2}|A_M(\zZ_i)|^{-1}\left(\zZ_i+|A_M(\zZ_i)|^{-1}\xX\right)^\perp=\oO.
\end{equation}
Hence, together with \eqref{DoubleCurvEqn}, \eqref{IntegralCurvEqn}, \eqref{ScaleRadEqn}, and \eqref{ScaleSurfaceEqn}, it is implied from the Schauder estimates for uniformly elliptic equations and the Arzel\`{a}-Ascoli theorem that, after passing to a subsequence and relabeling, the $N_i$ converges locally smoothly (possibly with multiplicity) to a properly embedded minimal surface $N$ in $\Real^3$ of finite total curvature and genus $0$. Moreover, 
$$
|A_N(\oO)|=1 \mbox{ and } \sup_{\xX\in N}|A_N(\xX)|\le 2.
$$
Furthermore it follows from \cite{LR} that $N$ must be a Catenoid in $\Real^3$ which rotates about a straight line of direction $\mathbf{e}$.

Next we choose $R$ to be a sufficiently large constant so that $N\cap (B^3_{4R}\setminus B^3_R)$ is close to two parallel planes with normal $\mathbf{e}$. Let $M_i=|A_M(\zZ_i)|^{-1}\tilde{N}_i+\zZ_i$ where $\tilde{N}_i$ is the connected component of $N_i\cap B^3_{4R}$ containing $\oO$. For $i$ sufficiently large, $\tilde{N}_i$ is given by the normal exponential graph of a small function over some subset of $N$. Let $\gamma_i=|A_M(\zZ_i)|^{-1}\tilde{\gamma}_i+\zZ_i$ where $\tilde{\gamma}_i$ is the image of the closed geodesic, which encircles the neck of $N$, via the graphical representation of $\tilde{N}_i$. Thus $\gamma_i$ for $i$ sufficiently large is a simple closed planar curve in $M_i$. Denote by $M_i^+,M_i^{-}$ the two connected components of $M_i\setminus\gamma_i$.
By passing to a subsequence and relabeling, we may assume that $|\zZ_i|^{-1}\zZ_i\to\vV$. We divide the rest of the proof into two cases according to whether $\vV$ is parallel to $\mathbf{e}$ or not. 

Suppose that $\vV$ is parallel to $\mathbf{e}$.  By elementary geometry and Thom's transversality theorem, for each $i$ large, there is a solid cylinder $S_i=\Real_{\vV_i}\times\bar{B}_2^2$ so that $\partial S_i$ and $M\setminus\partial M$ intersect transversally, $S_i\cap M_i^\pm\neq\emptyset$, and $S_i\cap\gamma_i=\emptyset$.\footnote{Here is a recipe to find $S_i$. Let $\tilde{\zZ}$ be the center of the neck of $N$, and let $\tilde{\zZ}_i=|A_M(\zZ_i)|^{-1}\tilde{\zZ}+\zZ_i$. Let $\zZ_i^\prime$ be a point in the plane through $\tilde{\zZ}_i$ normal to $\mathbf{e}$ so that $2|A_M(\zZ_i)|^{-1}R<|\zZ^\prime_i-\tilde{\zZ}_i|<3|A_M(\zZ_i)|^{-1}R$. By continuity, there is a plane $\Pi_i$ through $\zZ_i^\prime$ such that $\mathbf{e}\times(\zZ_i^\prime-\tilde{\zZ}_i)$ is parallel to $\Pi$ and ${\rm dist}(\oO,\Pi_i)=2$. Observe that $\zZ^\prime_i-\tilde{\zZ}_i$ is almost normal to $\Pi$. Thus, $\Pi\cap\gamma_i=\emptyset$, and $\Pi$ and $M_i^\pm$ intersect nontrivially and transversally. Now let $S_i$ be the solid round cylinder of radius $2$ so that $\partial S_i$  is tangent to $\Pi$ at $\zZ_i^\prime$, and $S_i$ and $\gamma_i$ lie in different sides of $\Pi$. Finally we can adjust $\zZ^\prime_i$ in an arbitrarily small manner so that $\partial S_i$ and $M\setminus\partial M$  intersect transversally.} Furthermore, as $\bar{M}\subset\Real^3\setminus\bar{B}^3_4$, Lemma \ref{MaxPrincipleLem} implies that $M_i^\pm$ can be connected to $\partial M$ via the surface $\bar{M}\cap S_i$. Hence it follows that $\gamma_i$ does not separate $\bar{M}$, leading to a contradiction.

Suppose that $\vV$ is not parallel to $\mathbf{e}$. Then, for $i$ sufficiently large,
$$
\inf_{\xX\in M_i^\pm} |\xX|<\inf_{\xX\in\gamma_i} |\xX|=a_i.
$$
However, by \cite[Lemma 3.20]{CMGenSing},
$$
\Delta_M |\xX|^2-\frac{1}{2}\xX\cdot\nabla_M |\xX|^2+|\xX|^2-4=0.
$$
As $\bar{M}\subset\Real^3\setminus\bar{B}_4^3$, it follows from the maximum principle for elliptic equations that $M_i^\pm$ can be connected to $\partial M$ via the surface $\bar{M}\cap B^3_{a_i}$. This implies that $\gamma_i$ does not separate $\bar{M}$, giving a contradiction.
\end{proof}

\begin{acknowledgement*}
The author is very grateful to Otis Chodosh for generously sharing his lecture notes on mean curvature flow based on a course taught by Brian White. She would also like to thank Otis Chodosh, Tom Ilmanen, Rob Kusner, Nan Li, and Neshan Wickramasekera for helpful discussions. Finally this work was initiated during the author's visit of Cambridge University in May and June 2013, and she thanks Neshan Wickramasekera for his kind invitation and hospitality.
\end{acknowledgement*}

\end{document}